\newcommand\red[1]{\textcolor{red}{#1}}
\newtheorem{theorem}{Theorem}[section]
\newtheorem{lemma}[theorem]{Lemma}
\newtheorem{corollary}[theorem]{Corollary}
\newtheorem{remark}[theorem]{Remark}
\newcommand{\footremember}[2]{%
	\footnote{#2}
	\newcounter{#1}
	\setcounter{#1}{\value{footnote}}%
}
\newcommand{\footrecall}[1]{%
	\footnotemark[\value{#1}]%
} 
\author{%
	C\'ecile Haberstich\footremember{alley}{CEA, DAM, DIF, F-91297 Arpajon France}\footremember{trailer}{Centrale Nantes, LMJL UMR CNRS 6629, France}%
	\and Anthony Nouy\footrecall{trailer} %
	\and Guillaume Perrin\footrecall{alley} %
}
\title{Boosted optimal weighted least-squares}
\date{}
\begin{document}
	
	\maketitle
	
	\begin{abstract}
This paper is concerned with the approximation of a function $u$ in a given approximation space $V_m$ of dimension $m$ from evaluations of the function at $n$ suitably chosen points. The aim is to construct an approximation of $u$ in $V_m$ which yields an error close to the best approximation error in $V_m$ and using as few evaluations as possible. Classical least-squares regression, which defines a projection in $V_m$ from $n$ random points, usually requires a large $n$ to guarantee a stable approximation and an error close to the best approximation error. This is a major drawback for applications where $u$ is expensive to evaluate. 
One remedy is to use a weighted least-squares projection using $n$ samples drawn from a properly selected distribution.
In this paper, we introduce a boosted weighted least-squares method which allows to ensure almost surely the stability of the weighted least-squares projection with a sample size close to the interpolation regime $n=m$. It consists in sampling according to a measure associated with the optimization of a stability criterion over a collection of independent $n$-samples, and resampling according to this measure until a stability condition is satisfied. A greedy method is then proposed to remove points from the obtained sample. Quasi-optimality properties in expectation are obtained for the weighted least-squares projection, with or without the greedy procedure. The proposed method is validated on numerical examples and compared to state-of-the-art interpolation and weighted least-squares methods.
	\end{abstract}
	
	\begin{keywords}
		approximation, weighted least-squares, optimal sampling, error analysis, greedy algorithm, interpolation
	\end{keywords}
	
	\section{Introduction}
	~
	The continuous improvement of computational resources makes the role of the numerical simulation always more important for modelling complex systems. However most of these numerical simulations remain very costly from a computational point of view. Furthermore, for many problems such as optimization, estimation or uncertainty quantification, the model is a function of possibly numerous parameters (design variables, uncertain parameters...) and has to be evaluated for many instances of the parameters.
	One remedy is to build an approximation of this function of the parameters which is further used as a surrogate model, or as a companion model used as a low-fidelity model. 
	
	This paper is concerned with the approximation of a function $u$ using evaluations of the function at suitably chosen points. We consider functions from $L^2_\mu(\mathcal {X})$, the space of square-integrable functions defined on a set $\mathcal X$ equipped with a probability measure $\mu$. Given an approximation space $V_m$ of dimension $m$ in $L^2_\mu(\mathcal X)$,  
	the aim is to construct an approximation of $u$ in $V_m$ which yields an error close to the best approximation error in $V_m$ and using as few evaluations as possible. 
	A classical approach is least-squares regression, which defines the approximation by solving  
	$$
	\min_{v\in V_m} \frac{1}{n} \sum_{i=1}^{n} (u(x^i) - v(x^i))^2,
	$$
	where the $x^i$ are i.i.d. samples drawn from the measure $\mu$.
	However, to guarantee a stable approximation and an error close to the best approximation error, least-squares regression may require a sample size $n$ much higher than $m$ (see \cite{Cohen2013}). This issue can be overcome by weighted least-squares projection, which is obtained by solving
	$$
	\min_{v\in V_m} \frac{1}{n} \sum_{i=1}^{n} w(x^i) (u(x^i) - v(x^i))^2,
	$$
	where the $x^i$ are points not necessarily drawn from $\mu$ and the $w(x^i)$ are corresponding weights. A suitable choice of weights and points may allow to decrease the sample size to reach the same approximation error, see e.g.  \cite{Doostan2015,narayan2017christoffel}. 
	In \cite{Cohen2016}, the authors introduce an optimal sampling measure $\rho$ with a density  $w(x)^{-1}$ with respect to the reference measure $\mu$ which depends on the approximation space. Choosing i.i.d. samples $x^i$ from this optimal measure, one obtains with high probability $1-\eta$ a stable approximation and an error of the order of the best approximation error using a sample size $n$ in $\mathcal{O}(m \log(m \eta^{-1}))$. Nevertheless, the necessary condition for having stability requires a sample size $n$ much higher than $m$, especially when a small probability $\eta$ is desired.\\
	
	Here we introduce 
	a boosted least-squares method which enables us to ensure almost surely the stability of the weighted least-squares projection in expectation with a sample size close to the interpolation regime $n=m$. It consists in sampling according to a measure associated with the optimization of a stability criterion over a collection of independent $n$-samples, and resampling according to this measure until a stability condition is satisfied. A greedy method is then proposed to remove points from the obtained sample. Quasi-optimality properties in expectation are obtained for the weighted least-squares projection, with or without the greedy procedure.\\
	
	If the observations are polluted by a noise, here modeled by a random variable $e$, then the weighted least-squares projection is defined as the solution of 
	$$
	\min_{v\in V_m} \frac{1}{n} \sum_{i=1}^{n} (y^i - v(x^i))^2,
	$$ 
	where $y^i = u(x^i)+ e^i$, with $\{e^i\}_{i=1}^n$ i.i.d realizations of the random variable $e$. Quasi-optimality property is lost in the case of noisy observations, because of an additional error term due to the noise. This latter error term can however be reduced by increasing $n$.\\
	
	The outline of the paper is as follows. In Section \ref{sec:weighted-least-squares}, we introduce the theoretical framework, and present some useful results on weighted least-squares projections. We recall the optimal sampling measure from \cite{Cohen2016}, and outline its limitations. In Section \ref{sec:boosted}, we present the boosted least-squares approach and analyze it in the noise-free case. The theoretical results are extended to the noisy case in Section \ref{sec:noisy_case}. In Section \ref{sec:examples}, we present numerical examples.         
	
	\section{Least-squares method}\label{sec:weighted-least-squares}
	Let $\mathcal{X} $ be a subset of $ \mathbb{R}^d$ equipped with a probability measure $\mu$, with $d\ge 1$. We consider a function $u$ from  $L^2_{\mu}(\mathcal{X})$, the Hilbert space of square-integrable real-valued functions defined on $\mathcal{X}$.  We let $\Vert \cdot \Vert_{L^2_{\mu}}$ be the natural norm in $L^2_{\mu}(\mathcal{X})$ defined by
	\begin{equation}
	\Vert v \Vert^2_{L^2_{\mu}} = \int_{\mathcal{X}}  v(x) ^2 d\mu(x).
	\end{equation}
	When there is no ambiguity, $L^2_{\mu}(\mathcal{X})$ will be simply denoted $L^2_{\mu}$, and the norm $\Vert v \Vert^2_{L^2_{\mu}}$  and associated inner product $(\cdot,\cdot)_{L^2_\mu}$ will be denoted $\Vert \cdot \Vert$ and $(\cdot,\cdot)$ respectively.  Let $V_m$ be a $m$-dimensional subspace of $L^2_{\mu}$, with $m\ge 1$, and $\{\varphi_j\}_{j=1}^m$ be an orthonormal basis of $V_m$. The best approximation of $u$ in $V_m$ is given by its orthogonal projection defined by
	\begin{equation}
	P_{V_m}u := \arg\min_{v \in V_m} \Vert u - v \Vert.
	\end{equation}
	
	\subsection{Weighted least-squares projection}
	\label{sec:wls}
	Letting  $\bm{x}^n:=\{x^i\}_{i=1}^n$ be a set of $n$ points in $\mathcal{X}$, we consider 
	the weighted least-squares projection defined by
	\begin{equation}
	Q_{V_m}^{\bm x^n}u := \arg\min_{v \in {V_m}} \Vert u - v \Vert_{\bm{x}^n}, \label{wls}
	\end{equation}
	where $\Vert \cdot \Vert_{\bm{x}^n}$ is a discrete semi-norm defined for $v$ in $L^2_\mu$ by
	\begin{equation}
	\Vert v \Vert_{\bm{x}^n}^2 := \frac{1}{n} \sum_{i=1}^n w(x^i) v(x^i) ^2, 
	\end{equation}
	where $w$ is a given non negative function defined on $\mathcal{X}$. 
	We denote by $$\bm{\varphi} = (\varphi_1, \hdots, \varphi_m): \mathcal{X} \to \mathbb{R}^m$$  the $m$-dimensional vector-valued function such that $\bm{\varphi}(x) =  (\varphi_1(x), \hdots, \varphi_m(x))^T$, 
	and by $\bm{G}_{\bm{x}^n}$ the  empirical Gram matrix defined by 
	\begin{equation}
	\bm{G}_{\bm{x}^n} := \frac{1}{n}\sum_{i=1}^n w(x^i)\bm{\varphi}(x^i)\otimes \bm{\varphi}(x^i).
	\end{equation}
	The stability of the weighted least-squares projection can be characterized by  
	$$Z_{\bm{x}^n} :=  \Vert \bm{G}_{\bm{x}^n}- \bm{I} \Vert_2,$$ 
	which measures 
	a distance between the empirical Gram matrix and the identity matrix $\bm{I}$, with $\Vert \cdot \Vert_2$ the matrix spectral norm. For any $v$ in $V_m$, we have 
	\begin{equation}
	(1-Z_{\bm{x}^n})  \Vert v \Vert^2 \le \Vert v \Vert_{\bm{x}^n}^2  \le (1+Z_{\bm{x}^n}) \Vert v \Vert^2.\label{norm-equiv-Z}
	\end{equation} 
	We have the following properties that will be useful in subsequent analyses. 
	\begin{lemma}\label{lem:quasi-optim-under-condition}
		Let $\bm x^n$ be a set of $n$ points in $\mathcal{X}$ such that $Z_{\bm{x}^n} =  \Vert \bm{G}_{\bm{x}^n}- \bm{I} \Vert_2\le \delta$ for some $\delta \in [0,1)$. Then 
		\begin{equation}
		(1-\delta)  \Vert v \Vert^2 \le \Vert v \Vert_{\bm{x}^n}^2  \le (1+ \delta) \Vert v \Vert^2\label{norm-equiv-delta}
		\end{equation}
		and the weighted least-squares projection $Q_{V_m}^{\bm x^n} u$ associated with $\bm x^n$ satisfies 
		\begin{equation}
		\Vert u - Q_{V_m}^{\bm x^n} u \Vert^2 \le   \Vert u - P_{V_m}u \Vert^2 + (1-\delta)^{-1} \Vert u - P_{V_m}u \Vert_{\bm x^n}^2.\label{quasi-optim-L2-under-condition}
		\end{equation}
	\end{lemma}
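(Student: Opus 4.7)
The first inequality \eqref{norm-equiv-delta} is immediate: since $Z_{\bm{x}^n}\le \delta$, the general equivalence \eqref{norm-equiv-Z} gives $(1-\delta)\|v\|^2 \le (1-Z_{\bm{x}^n})\|v\|^2 \le \|v\|_{\bm{x}^n}^2 \le (1+Z_{\bm{x}^n})\|v\|^2 \le (1+\delta)\|v\|^2$ for every $v\in V_m$. In particular, since $\delta<1$, the semi-norm $\|\cdot\|_{\bm x^n}$ is actually a norm on $V_m$, so the weighted least-squares projection $Q_{V_m}^{\bm x^n}u$ is well defined and the associated discrete inner product orthogonality relation $(u - Q_{V_m}^{\bm x^n}u, v)_{\bm x^n} = 0$ holds for every $v\in V_m$.

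For the quasi-optimality bound \eqref{quasi-optim-L2-under-condition}, the plan is to decompose $u - Q_{V_m}^{\bm x^n}u$ through $P_{V_m}u$. Since $u-P_{V_m}u$ is $L^2_\mu$-orthogonal to $V_m$ and $P_{V_m}u - Q_{V_m}^{\bm x^n}u \in V_m$, Pythagoras in $L^2_\mu$ gives
\begin{equation*}
\|u - Q_{V_m}^{\bm x^n}u\|^2 = \|u - P_{V_m}u\|^2 + \|P_{V_m}u - Q_{V_m}^{\bm x^n}u\|^2.
\end{equation*}
It then remains to control the second term by $(1-\delta)^{-1}\|u-P_{V_m}u\|_{\bm x^n}^2$.

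Applying the lower bound in \eqref{norm-equiv-delta} to $v = P_{V_m}u - Q_{V_m}^{\bm x^n}u \in V_m$ yields
\begin{equation*}
\|P_{V_m}u - Q_{V_m}^{\bm x^n}u\|^2 \le (1-\delta)^{-1} \|P_{V_m}u - Q_{V_m}^{\bm x^n}u\|_{\bm x^n}^2.
\end{equation*}
To bound the right-hand side, I would use the discrete orthogonality of $Q_{V_m}^{\bm x^n}u$: writing $P_{V_m}u - Q_{V_m}^{\bm x^n}u = (P_{V_m}u - u) + (u - Q_{V_m}^{\bm x^n}u)$ and taking the discrete inner product with $P_{V_m}u - Q_{V_m}^{\bm x^n}u \in V_m$ kills the second piece, so Cauchy--Schwarz in $(\cdot,\cdot)_{\bm x^n}$ gives $\|P_{V_m}u - Q_{V_m}^{\bm x^n}u\|_{\bm x^n} \le \|u - P_{V_m}u\|_{\bm x^n}$.

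Chaining these three inequalities produces exactly \eqref{quasi-optim-L2-under-condition}. The argument is entirely elementary; there is no genuine obstacle, only the standard care of splitting the error via $P_{V_m}u$ and using the two different orthogonalities (continuous for one term, discrete for the other). The condition $\delta<1$ is used in two places: to ensure that $\|\cdot\|_{\bm x^n}$ is a norm on $V_m$ (so that $Q_{V_m}^{\bm x^n}u$ is uniquely defined and discretely orthogonal) and to invert $1-\delta$ in the final bound.
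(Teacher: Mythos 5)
Your proof is correct and follows essentially the same route as the paper: Pythagoras in $L^2_\mu$ through $P_{V_m}u$, the lower bound of \eqref{norm-equiv-delta} applied to $P_{V_m}u - Q_{V_m}^{\bm x^n}u \in V_m$, and the contraction property of the discrete projection. The only cosmetic difference is that you establish $\Vert P_{V_m}u - Q_{V_m}^{\bm x^n}u\Vert_{\bm x^n} \le \Vert u - P_{V_m}u\Vert_{\bm x^n}$ via the normal equations and Cauchy--Schwarz, whereas the paper writes $P_{V_m}u - Q_{V_m}^{\bm x^n}u = Q_{V_m}^{\bm x^n}(P_{V_m}u - u)$ and invokes non-expansiveness of $Q_{V_m}^{\bm x^n}$ in the semi-norm; these are equivalent.
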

	\begin{proof}
		The property \eqref{norm-equiv-delta} directly follows from \eqref{norm-equiv-Z} and $Z_{\bm x^n}\le \delta$.
		Using the property of the orthogonal  projection $P_{V_m}u$ and \eqref{norm-equiv-delta}, we have that 
		\begin{equation*}
		\begin{aligned}
		\Vert u - Q_{V_m}^{\bm x^n} u \Vert^2 & = \Vert u - P_{V_m} u \Vert^2 + \Vert  P_{V_m} u - Q_{V_m}^{\bm x^n} u  \Vert^2\\
		&   \le \Vert u - P_{V_m} u \Vert^2 + (1-\delta)^{-1}\Vert  P_{V_m} u - Q_{V_m}^{\bm x^n} u  \Vert_{\bm x^n}^2.
		\end{aligned}
		\end{equation*}
		Using the fact that $Q_{V_m}^{\bm x^n}$ is an orthogonal projection on $V_m$ with respect to the semi-norm $\Vert \cdot \Vert_{\bm x^n}$, we have that for any $v$, $\Vert Q_{V_m}^{\bm x^n} v \Vert_{\bm x^n} \le \Vert v \Vert_{\bm x^n}$. We deduce that 
		$$
		\Vert  P_{V_m} u - Q_{V_m}^{\bm x^n} u  \Vert_{\bm x^n} = \Vert  Q_{V_m}^{\bm x^n} (P_{V_m} u - u)  \Vert_{\bm x^n} \le  \Vert   P_{V_m} u - u  \Vert_{\bm x^n},
		$$
		from which we deduce  \eqref{quasi-optim-L2-under-condition}. 
	\end{proof}
	We now provide a result which bounds the $L^2$ error by a best approximation error with respect to a weighted supremum norm.
	\begin{theorem}\label{th:L2Linf-conditioned}
		Let $\bm x^n$ be a set of $n$ points in $\mathcal{X}$ such that $Z_{\bm{x}^n} =  \Vert \bm{G}_{\bm{x}^n}- \bm{I} \Vert_2\le \delta$ for some $\delta \in [0,1)$. Then, 
		\begin{equation}
		\Vert u - Q_{V_m}^{\bm x^n} u \Vert \le  (B + (1-\delta)^{-1/2}) \inf_{v\in V_m} \Vert u - v \Vert_{\infty,w}\label{quasi-optim-L2-Linf-under-condition}
		\end{equation} 
		where $ B^2  = \int_{\mathcal{X}} w(x)^{-1}d\mu(x)$ and $\Vert v \Vert_{\infty,w} = \sup_{x \in \mathcal{X}} w(x)^{1/2} \vert v(x) \vert. $
	\end{theorem}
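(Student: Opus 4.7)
The plan is to avoid going through the $L^2$-best approximation $P_{V_m} u$ (as in Lemma \ref{lem:quasi-optim-under-condition}) and instead bound the error directly against an arbitrary competitor $v \in V_m$, so that the final minimization can be taken with respect to $\|\cdot\|_{\infty,w}$. The idea is to use the triangle inequality $\|u - Q_{V_m}^{\bm x^n} u\| \le \|u - v\| + \|v - Q_{V_m}^{\bm x^n} u\|$ for any $v \in V_m$, and then exploit that $v - Q_{V_m}^{\bm x^n} u$ lies in $V_m$ to switch to the discrete semi-norm via \eqref{norm-equiv-delta}.

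More precisely, since $v - Q_{V_m}^{\bm x^n} u \in V_m$, applying \eqref{norm-equiv-delta} yields $\|v - Q_{V_m}^{\bm x^n} u\| \le (1-\delta)^{-1/2} \|v - Q_{V_m}^{\bm x^n} u\|_{\bm x^n}$. Then, because $Q_{V_m}^{\bm x^n}$ is the orthogonal projection onto $V_m$ with respect to $\|\cdot\|_{\bm x^n}$ and $Q_{V_m}^{\bm x^n} v = v$, we have $\|v - Q_{V_m}^{\bm x^n} u\|_{\bm x^n} = \|Q_{V_m}^{\bm x^n}(v-u)\|_{\bm x^n} \le \|u-v\|_{\bm x^n}$. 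Combining these two steps gives
\begin{equation*}
\|u - Q_{V_m}^{\bm x^n} u\| \le \|u-v\| + (1-\delta)^{-1/2} \|u-v\|_{\bm x^n}.
\end{equation*}

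It then remains to dominate both terms on the right by $\|u-v\|_{\infty,w}$. For the continuous norm, the trick is to insert the factor $w(x)^{1/2} w(x)^{-1/2}$ inside the $L^2_\mu$-integral: writing $(u-v)^2 = w\,(u-v)^2 \cdot w^{-1}$ and pulling $\|u-v\|_{\infty,w}^2 = \sup_x w(x)(u-v)(x)^2$ out of the integral yields $\|u-v\|^2 \le B^2 \|u-v\|_{\infty,w}^2$, using the definition of $B$. For the discrete semi-norm, the bound $w(x^i)(u-v)(x^i)^2 \le \|u-v\|_{\infty,w}^2$ holds pointwise, so averaging over the $n$ samples gives $\|u-v\|_{\bm x^n} \le \|u-v\|_{\infty,w}$. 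Substituting these two bounds into the previous display gives $\|u - Q_{V_m}^{\bm x^n} u\| \le (B + (1-\delta)^{-1/2})\|u-v\|_{\infty,w}$, and taking the infimum over $v \in V_m$ yields \eqref{quasi-optim-L2-Linf-under-condition}.

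No step here is a real obstacle; the only subtlety is choosing to deviate from the $P_{V_m}u$-based estimate \eqref{quasi-optim-L2-under-condition} of Lemma \ref{lem:quasi-optim-under-condition}, since one cannot control $\|u - P_{V_m}u\|_\infty$ by the best $L^2$-error. Keeping an arbitrary $v$ until the very end is what makes the $\inf_{v\in V_m}\|u-v\|_{\infty,w}$ appear naturally.
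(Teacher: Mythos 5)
Your proposal is correct and follows essentially the same route as the paper's own proof: triangle inequality against an arbitrary $v\in V_m$, the norm equivalence \eqref{norm-equiv-delta} applied to $v-Q_{V_m}^{\bm x^n}u\in V_m$, the contraction property $\Vert Q_{V_m}^{\bm x^n}(v-u)\Vert_{\bm x^n}\le\Vert u-v\Vert_{\bm x^n}$, and finally the two comparisons $\Vert u-v\Vert\le B\Vert u-v\Vert_{\infty,w}$ and $\Vert u-v\Vert_{\bm x^n}\le\Vert u-v\Vert_{\infty,w}$. Nothing is missing; the argument matches the paper step for step.
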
	
	\begin{proof}
		Using Lemma \ref{lem:quasi-optim-under-condition} we note that for any $v\in V_m$,
		$$
		\Vert u - Q_{V_m}^{\bm x^n} u \Vert \le \Vert u - v \Vert + (1-\delta)^{-1/2} \Vert v - Q_{V_m}^{\bm x^n} u \Vert_{\bm x^n}, 
		$$
		and $\Vert v - Q_{V_m}^{\bm x^n} u \Vert_{\bm x^n}  =   \Vert Q_{V_m}^{\bm x^n}(v -  u) \Vert_{\bm x^n} \le \Vert u - v \Vert_{\bm x^n}$.\\ We then conclude by using the inequalities  $\Vert u-v \Vert_{\bm x^n} \le \Vert u- v \Vert_{\infty,w}$ and $\Vert u-v \Vert \le \left(\int_{\mathcal{X}} w(x)^{-1} d\mu(x) \right)^{1/2}\sup_{x\in \mathcal{X}} w(x)^{1/2} \vert u(x) -v(x) \vert. $
	\end{proof}
	In the case where $w^{-1}$ is the density of a probability measure with respect to $\mu$, (which will be the case in the rest of the paper), the constant $B$ from Theorem \ref{th:L2Linf-conditioned} is equal to 1. 
	\subsection{Random sampling}
	We consider the measure $\rho$ on $\mathcal{X}$ with density $w^{-1}$ with respect to $\mu$, i.e. $d\rho = w^{-1} d\mu$. If the $x^1,\hdots,x^n$ are i.i.d. random variables drawn from the measure $\rho$, or equivalently if $\bm{x}^n = (x^1,\hdots,x^n)$ is drawn from the product measure  $\rho^{\otimes n} := \bm \rho^n$ on $\mathcal{X}^n$, then for any function $v$ in $L^2_\mu$ (not only those in $V_m$), we have 
	\begin{equation}
	\mathbb{E}( \Vert v \Vert_{\bm{x}^n}^2 ) 
	= \Vert v \Vert^2. \label{mean-discrete-norm}
	\end{equation}
	The condition \eqref{mean-discrete-norm} restricted to all functions $v\in V_m$ implies that the empirical Gram matrix 
	$\bm{G}_{\bm{x}^n}$ satisfies 
	\begin{equation}
	\mathbb{E}( \bm{G}_{\bm{x}^n}) =\frac{1}{n}\sum_{i=1}^n\mathbb{E}( w(x^i)\bm{\varphi}(x^i)\otimes \bm{\varphi}(x^i))=\bm{I}.
	\end{equation}
	The random variable $Z_{\bm{x}^n} = \Vert \bm{G}_{\bm{x}^n} - \bm I \Vert_2$ quantifies how much the random matrix $\bm{G}_{\bm{x}^n}$  deviates from its expectation. For any $\delta \in [0,1)$, if
	\begin{equation}
	\mathbb{P}(Z_{\bm{x}^n}  > \delta) \le \eta, \label{bound-PZ}
	\end{equation}
	then for all $v \in V_m$, Eq. \eqref{norm-equiv-delta} holds 
	with probability higher than $1-\eta.$ We directly conclude from Theorem \ref{th:L2Linf-conditioned} that 
	the weighted least-squares projection $Q_{V_m}^{{\bm{x}^n}}$ satisfies \eqref{quasi-optim-L2-Linf-under-condition}
	with probability higher than $1-\eta$ (and $B=1$). \\
	
	Now, we present results in expectation which relate the $L^2$-error with the best approximation in $L^2_{\mu}$. 
	We have the following result from \cite{Cohen2016} for a conditional weighted least-squares projection, here stated in a slightly different form.
	\begin{theorem}[\cite{Cohen2016}]\label{th:conditional-projection}
		Let $\bm x^n$ be drawn from the measure  $\bm \rho^n$ and let 
		$Q_{V_m}^{\bm x^n} u$ be the associated weighted least-squares projection of $u$.
		For any $\delta \in [0,1)$ and $\eta \in [0,1]$ such that \eqref{bound-PZ} holds, 
		\begin{equation}\label{bound-conditional-projection}
		\mathbb{E}(\Vert u - {Q}^{\bm x^n,C}_{V_m}u \Vert^2) \le (1+(1-\delta)^{-1}) \Vert u - P_{V_m}u \Vert^2 + \eta \Vert u \Vert^2 ,
		\end{equation}
		where ${Q}^{\bm x^n,C}_{V_m}u = Q_{V_m}^{\bm x^n} u$ if $Z_{\bm x^n}\le \delta$ and $0$ otherwise.
	\end{theorem}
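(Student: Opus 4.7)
The plan is to decompose the expectation according to whether the stability event $\{Z_{\bm x^n}\le \delta\}$ holds or not, since the definition of the conditional projection ${Q}^{\bm x^n,C}_{V_m}u$ is tailored exactly to this split. On the stable event I will invoke Lemma \ref{lem:quasi-optim-under-condition}, and on the unstable event I will simply use that the truncated projection is $0$, so the error equals $\Vert u\Vert^2$ and can be controlled by the probability bound \eqref{bound-PZ}.

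More precisely, I would write
\begin{equation*}
\mathbb{E}(\Vert u - {Q}^{\bm x^n,C}_{V_m}u \Vert^2)
= \mathbb{E}\bigl(\mathbf 1_{\{Z_{\bm x^n}\le \delta\}}\Vert u - Q_{V_m}^{\bm x^n} u \Vert^2\bigr)
+ \mathbb{E}\bigl(\mathbf 1_{\{Z_{\bm x^n}> \delta\}}\Vert u \Vert^2\bigr).
\end{equation*}
The second term is bounded by $\mathbb{P}(Z_{\bm x^n}> \delta)\Vert u\Vert^2\le \eta \Vert u\Vert^2$ using hypothesis \eqref{bound-PZ}, which is exactly the extra $\eta\Vert u\Vert^2$ contribution on the right-hand side. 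For the first term, on the event $\{Z_{\bm x^n}\le \delta\}$ Lemma \ref{lem:quasi-optim-under-condition} gives the pointwise bound $\Vert u - Q_{V_m}^{\bm x^n} u\Vert^2 \le \Vert u - P_{V_m}u\Vert^2 + (1-\delta)^{-1}\Vert u - P_{V_m}u\Vert_{\bm x^n}^2$, so dropping the indicator only enlarges the right-hand side and leads to
\begin{equation*}
\mathbb{E}\bigl(\mathbf 1_{\{Z_{\bm x^n}\le \delta\}}\Vert u - Q_{V_m}^{\bm x^n} u\Vert^2\bigr)
\le \Vert u - P_{V_m}u\Vert^2 + (1-\delta)^{-1}\,\mathbb{E}\bigl(\Vert u - P_{V_m}u\Vert_{\bm x^n}^2\bigr).
\end{equation*}

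The final ingredient is identity \eqref{mean-discrete-norm}, applied to the function $u - P_{V_m}u \in L^2_\mu$ (note that \eqref{mean-discrete-norm} is valid for any $L^2_\mu$ function, not just elements of $V_m$); this gives $\mathbb{E}(\Vert u - P_{V_m}u\Vert_{\bm x^n}^2) = \Vert u - P_{V_m}u\Vert^2$, and assembling the two contributions yields exactly $(1+(1-\delta)^{-1})\Vert u - P_{V_m}u\Vert^2 + \eta\Vert u\Vert^2$. There is no real obstacle: the statement is essentially a repackaging of Lemma \ref{lem:quasi-optim-under-condition}, of \eqref{mean-discrete-norm}, and of the probability bound on $Z_{\bm x^n}$. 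The only subtle point is checking that one is allowed to use \eqref{mean-discrete-norm} with $u-P_{V_m}u$ rather than with an element of $V_m$; this is fine because \eqref{mean-discrete-norm} was stated for general $v\in L^2_\mu$, and it is precisely this freedom that lets the best-approximation error on the right-hand side appear in $L^2_\mu$ norm rather than in discrete norm.
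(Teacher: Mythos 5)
Your proposal is correct and follows essentially the same route as the paper's own proof: the same decomposition over the event $\{Z_{\bm x^n}\le \delta\}$ and its complement, the same use of Lemma \ref{lem:quasi-optim-under-condition} followed by dropping the indicator, and the same application of \eqref{mean-discrete-norm} to $u-P_{V_m}u$. Your remark that \eqref{mean-discrete-norm} holds for general $v\in L^2_\mu$, not only for elements of $V_m$, is exactly the point the paper relies on as well, so there is nothing to add.
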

	\begin{proof}
		We have 
		$$
		\mathbb{E}(\Vert u - {Q}^{\bm x^n,C}_{V_m}u \Vert^2)  = 
		\mathbb{E}(\Vert u -Q_{V_m}^{\bm x^n} u \Vert^2 \mathds{1}_{Z_{\bm x^n} \le \delta}) 
		+  \Vert u  \Vert^2  \mathbb{E}(\mathds{1}_{Z_{\bm x^n} > \delta}),
		$$
		with $\mathbb{E}(\mathds{1}_{Z_{\bm x^n} > \delta}) = \mathbb{P}(Z_{\bm x^n} > \delta) \le \eta$.
		Then using Lemma \ref{lem:quasi-optim-under-condition} and \eqref{mean-discrete-norm}, we have 
		\begin{equation*}
		\begin{aligned}
		\mathbb{E}(\Vert u - Q_{V_m}^{\bm x^n} u \Vert^2 \mathds{1}_{Z_{\bm x^n} \le \delta}) & \le  \mathbb{E}((\Vert u - P_{V_m} u \Vert^2+ (1-\delta)^{-1} \Vert u - P_{V_m}u \Vert_{\bm x^n}^2 ) \mathds{1}_{Z_{\bm x^n} \le \delta})
		\\
		&\le \Vert u - P_{V_m} u \Vert^2  + (1-\delta)^{-1} \mathbb{E} (\Vert u - P_{V_m}u \Vert_{\bm x^n}^2  )
		\\
		&= (1+(1-\delta)^{-1}) \Vert u - P_{V_m} u \Vert^2,
		\end{aligned}
		\end{equation*}
		which concludes the proof.
	\end{proof}	
	Also, we have the following quasi-optimality property for the weighted least-squares projection associated with the distribution  $\bm \rho^n$ conditioned to the event $\{Z_{\bm x^n} \le \delta\}$.
	\begin{theorem}\label{th:conditional-expectation}
		Let $\bm x^n$ be drawn from the measure  $\bm \rho^n$ and let 
		$Q_{V_m}^{\bm x^n} u$ be the associated weighted least-squares projection of $u$. For any $\delta \in [0,1)$ and $\eta \in [0,1)$ such that \eqref{bound-PZ} holds,  
		\begin{equation}\label{quasi-optim-conditional}
		\mathbb{E}(\Vert u - Q_{V_m}^{\bm x^n} u\Vert^2 \vert Z_{\bm x^n} \le \delta) \le ( 1 + (1-\delta)^{-1} (1-\eta)^{-1}) \Vert u - P_{V_m} u \Vert^2.
		\end{equation}
	\end{theorem}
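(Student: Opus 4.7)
The plan is to start from the pointwise bound provided by Lemma \ref{lem:quasi-optim-under-condition}, which applies on the event $A := \{Z_{\bm x^n} \le \delta\}$. On this event we have
\begin{equation*}
\Vert u - Q_{V_m}^{\bm x^n} u \Vert^2 \le \Vert u - P_{V_m}u \Vert^2 + (1-\delta)^{-1} \Vert u - P_{V_m}u \Vert_{\bm x^n}^2,
\end{equation*}
so taking the conditional expectation given $A$ reduces the task to bounding $\mathbb{E}(\Vert u - P_{V_m}u \Vert_{\bm x^n}^2 \mid A)$ in terms of $\Vert u - P_{V_m}u \Vert^2$.

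The key observation is that the identity \eqref{mean-discrete-norm} applies to every function in $L^2_\mu$, not just those in $V_m$. Applying it to $v = u - P_{V_m} u$ yields $\mathbb{E}(\Vert u - P_{V_m}u \Vert_{\bm x^n}^2) = \Vert u - P_{V_m}u \Vert^2$. I would then use the elementary fact that for a nonnegative random variable $Y$ and an event $A$ of probability at least $1-\eta$,
\begin{equation*}
\mathbb{E}(Y \mid A) = \frac{\mathbb{E}(Y \mathds{1}_A)}{\mathbb{P}(A)} \le \frac{\mathbb{E}(Y)}{1-\eta},
\end{equation*}
since $\mathbb{E}(Y \mathds{1}_A) \le \mathbb{E}(Y)$ and $\mathbb{P}(A) \ge 1-\eta$ by \eqref{bound-PZ}. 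Applied with $Y = \Vert u - P_{V_m}u \Vert_{\bm x^n}^2$, this gives the bound $\mathbb{E}(\Vert u - P_{V_m}u \Vert_{\bm x^n}^2 \mid A) \le (1-\eta)^{-1}\Vert u - P_{V_m}u \Vert^2$.

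Combining the two steps yields \eqref{quasi-optim-conditional} immediately. There is no real obstacle here beyond recognizing that one should not attempt to apply \eqref{mean-discrete-norm} under the conditional distribution (where it need not hold), but rather control the conditional expectation by inflating the unconditional expectation by $(1-\eta)^{-1}$. The factor $(1-\delta)^{-1}(1-\eta)^{-1}$ on the second term of the bound is precisely the product of the stability constant from Lemma \ref{lem:quasi-optim-under-condition} and this conditioning cost.
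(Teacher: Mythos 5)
Your proposal is correct and follows essentially the same route as the paper: apply Lemma \ref{lem:quasi-optim-under-condition} on the event $\{Z_{\bm x^n}\le\delta\}$, then bound the conditional expectation of $\Vert u - P_{V_m}u\Vert_{\bm x^n}^2$ by the unconditional one divided by $\mathbb{P}(Z_{\bm x^n}\le\delta)\ge 1-\eta$, and finish with \eqref{mean-discrete-norm}. No gaps.
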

	\begin{proof}
		From Lemma \eqref{lem:quasi-optim-under-condition}, we have that 
		$$
		\mathbb{E}(\Vert u - Q_{V_m}^{\bm x^n}u \Vert^2 \vert Z_{\bm x^n} \le \delta) \le \Vert u - P_{V_m}u \Vert^2 + 
		(1-\delta)^{-1} \mathbb{E}(\Vert u - P_{V_m} u\Vert^2_{\bm x^n} \vert Z_{\bm x^n} \le \delta),
		$$
		and 
		$$
		\mathbb{E}(\Vert u - P_{V_m}u \Vert^2_{\bm x^n} \vert Z_{\bm x^n} \le \delta) \le \mathbb{E}(\Vert u - P_{V_m} u\Vert^2_{\bm x^n}) \mathbb{P}( Z_{\bm x^n} \le \delta)^{-1},
		$$
		and we conclude by using $\mathbb{P}( Z_{\bm x^n} \le \delta) \ge 1-\eta$ and the property \eqref{mean-discrete-norm}.
	\end{proof}

	\subsection{Optimal sampling measure}
	An inequality of the form \eqref{bound-PZ}
	can be obtained by concentration inequalities. A suitable sampling distribution can then be obtained by an optimization of the obtained upper bound.  
	An optimal choice for $w$ based on matrix Chernoff inequality is derived in \cite{Cohen2016} and given by 
	\begin{equation}\label{optimal-w}
	w(x)^{-1}= \frac{1}{m}\sum_{j=1}^m\varphi_j(x)^2 = \frac{1}{m} \Vert \bm{\varphi}(x) \Vert^2_2.
	\end{equation}
	Using this distribution, we obtain the following result, for which we provide a sketch of proof following \cite{Cohen2016}. The result is here provided in a slightly more general form than in \cite{Cohen2016}.
	\begin{theorem}
		\label{th:condition-sample-size}
		Let $\eta \in [0,1)$ and $\delta \in [0,1)$. Assume $\bm{x}^n$ is drawn from the product measure $\bm \rho^n = \rho^{\otimes n}$, with $\rho$ having the density \eqref{optimal-w} with respect to $\mu$. If the sample size $n$ is such that\footnote{Note that the constant in the condition \eqref{numberofsamples} differs from the one given in the reference  \cite{Cohen2016} for $\delta =1/2$, which was incorrect.}
		\begin{equation}
		\label{numberofsamples}
		n \ge n(\delta,\eta, m) := d_{\delta}^{-1} m \log\left( {2m}{\eta^{-1}}\right),
		\end{equation}
		with $d_{\delta} := -\delta + (1+\delta)\log(1+\delta)$, then $Z_{\bm{x}^n} = \Vert \bm{G}_{\bm{x}^n}- \bm{I}  \Vert_2$ satisfies 
		\eqref{bound-PZ}.
	\end{theorem}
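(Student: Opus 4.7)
The plan is to apply the matrix Chernoff inequality directly to the i.i.d. sum defining $\bm{G}_{\bm{x}^n}$. I would write $\bm{G}_{\bm{x}^n} = \frac{1}{n}\sum_{i=1}^{n}\bm{X}_i$ with $\bm{X}_i := w(x^i)\,\bm{\varphi}(x^i)\otimes\bm{\varphi}(x^i)$, which are i.i.d.\ symmetric positive semi-definite rank-one matrices satisfying $\mathbb{E}(\bm{X}_i)=\bm{I}$ (already established in the preceding subsection). The key structural observation is that the specific choice \eqref{optimal-w} yields the deterministic bound
\begin{equation*}
\lVert \bm{X}_i \rVert_2 = w(x^i)\,\lVert\bm{\varphi}(x^i)\rVert_2^2 = m \quad\text{a.s.},
\end{equation*}
so that $R:=m$ plays the role of a uniform upper bound on the summands. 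This essentially boundedness is what makes the choice \eqref{optimal-w} optimal for Chernoff-type concentration.

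Next I would invoke the standard matrix Chernoff inequality (in Tropp's formulation) for i.i.d.\ PSD summands. Applied to $\sum_{i=1}^{n}\bm{X}_i$, whose expectation has all eigenvalues equal to $n$, it yields the upper-tail bound
\begin{equation*}
\mathbb{P}\bigl(\lambda_{\max}(\bm{G}_{\bm{x}^n}) \ge 1+\delta\bigr) \le m\left(\frac{e^\delta}{(1+\delta)^{1+\delta}}\right)^{\!n/m} = m\exp\!\left(-\frac{n}{m}\,d_\delta\right),
\end{equation*}
with $d_\delta = -\delta + (1+\delta)\log(1+\delta)$, and the analogous lower-tail bound with exponent $-\delta - (1-\delta)\log(1-\delta)$ in place of $d_\delta$. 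A short monotonicity check shows that the lower-tail exponent dominates $d_\delta$ on $[0,1)$, so the lower-tail probability is also bounded by $m\exp(-n d_\delta/m)$.

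Combining the two tails by a union bound gives
\begin{equation*}
\mathbb{P}(Z_{\bm{x}^n} > \delta) = \mathbb{P}\bigl(\lambda_{\max}(\bm{G}_{\bm{x}^n}) \ge 1+\delta \text{ or } \lambda_{\min}(\bm{G}_{\bm{x}^n}) \le 1-\delta\bigr) \le 2m\exp\!\left(-\frac{n}{m}d_\delta\right).
\end{equation*}
Requiring the right-hand side to be at most $\eta$ is equivalent to $n \ge d_\delta^{-1} m \log(2m\eta^{-1})$, which is exactly \eqref{numberofsamples}.

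The proof is essentially routine once the matrix Chernoff bound is invoked; the only delicate point is to carry through the correct constant $d_\delta$ (as flagged in the footnote, this is where the earlier reference contained an inaccuracy for $\delta = 1/2$), and to make sure the lower-tail exponent is indeed controlled by $d_\delta$ so that a single factor of $2m$ suffices in the union bound.
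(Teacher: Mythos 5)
Your proof follows the same route as the paper's: decompose $\bm{G}_{\bm{x}^n}$ into i.i.d.\ rank-one summands with $\Vert \bm{X}_i\Vert_2 = m$ almost surely, apply Tropp's matrix Chernoff inequality to both tails, observe that the lower-tail rate $\delta+(1-\delta)\log(1-\delta)$ dominates $d_\delta$ on $[0,1)$, and conclude with a union bound; this is correct and in fact slightly more explicit than the paper, which states the two-sided bound $m\exp(-nd_\delta/m)$ directly. The only blemish is a sign slip in your phrasing of the lower tail: the quantity that replaces $d_\delta$ in $m\exp(-\tfrac{n}{m}d_\delta)$ is the positive rate $\delta+(1-\delta)\log(1-\delta)$, not $-\delta-(1-\delta)\log(1-\delta)$, as your subsequent monotonicity claim already presupposes.
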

	\begin{proof}
		We have $\bm{G}_{\bm{x}^n} = \frac{1}{n} \sum_{i=1}^n \bm{A} _i$ where the $\bm{A}_i = w(x^i)\bm{\varphi}(x^i)\otimes \bm{\varphi}(x^i)$ are random matrices such that $\mathbb{E}(\bm{A}_i) = \bm I$ and  $\Vert \bm A_i \Vert_2 = w(x^i) \Vert \bm{\varphi}(x^i)\Vert_2^2 = m$. The matrix Chernoff inequality from \cite[Theorem 5.1]{tropp2012user} gives that the minimal and maximal eigenvalues of $\bm{G}_{\bm{x}^n} - \bm I$ satisfy 
		$$
		\mathbb{P}(\lambda_{min}(\bm{G}_{\bm{x}^n} - \bm I) < - \delta)  \vee \mathbb{P}(\lambda_{max}(\bm{G}_{\bm{x}^n} - \bm I) > \delta) \le m \exp(-n d_\delta /m).
		$$  
		Under the condition \eqref{numberofsamples}, we have that $m \exp(-n d_\delta /m) \le {\eta}/{2}$ and 
		using a union bound, we deduce \eqref{bound-PZ}. 
	\end{proof}
	\begin{remark}
		Note that $d_\delta \le \delta^2 $. Then  a sufficient condition for satisfying the condition \eqref{numberofsamples} is 
		$n\ge \delta^{-2} m \log\left( {2m}{\eta^{-1}}\right).$
	\end{remark}
	\begin{remark}
		The quantile function of $Z_{\bm x^n}$ is defined for 
		$t \in [0,1]$ by $F_{Z_{\bm x^n}}^-(t) = \inf \{\delta : F_{Z_{\bm x^n}}(\delta) \ge t\}$, where $F_{Z_{\bm x^n}}$ is the cumulative density function of the random variable $Z_{\bm x^n}$.\\
		For given $n$ and $\eta$, $F_{Z_{\bm x^n}}^-(1-\eta) $ is the minimal $\delta$ such that $\eqref{bound-PZ} $ is satisfied. Denoting by $\delta_c(\eta,n) = \min \{\delta : n \ge n(\delta,\eta, m) \} $, we clearly have $F_{Z_{\bm x^n}}^-(1-\eta) \le \delta_c(\eta,n) $.
		The closer $\delta_c$ is from $F_{Z_{\bm x^n}}^-(1-\eta)$, the sharper the condition on the sample size $n$ is for satisfying \eqref{bound-PZ}.
	\end{remark}
	Theorem \ref{th:condition-sample-size} states that using the optimal sampling density \eqref{optimal-w}, a stable projection of $u$ is obtained with a sample size in $\mathcal{O}(m\log(m \eta^{-1}))$ with high probability. Note that a small probability $\eta$, and therefore a large sample size $n$, may be required for controlling the term $\eta \Vert u\Vert^2$ in the error bound \eqref{bound-conditional-projection} for the conditional projection, or for obtaining a quasi-optimality property \eqref{quasi-optim-conditional} in conditional expectation  with a quasi-optimality constant close to $1+(1-\delta)^{-1}.$ This will be improved in the next section by proposing a new distribution (obtained by resampling, conditioning and subsampling) allowing to obtain stability of the empirical Gram matrix with very high probability and a moderate sample size.
	
	\section{Boosted optimal weighted least-squares method}\label{sec:boosted}
	We here propose an improved  weighted least-squares method by proposing distributions over $\mathcal{X}^n$ having better properties than $\bm \rho^n = \rho^{\otimes n}.$ The function  $w$ defining the weighted least-squares projections will always be taken such that $w^{-1}$ is the density of the optimal sampling measure $\rho$ with respect to the reference measure $\mu.$
	
	\subsection{Resampling and conditioning}
	The first improvement consists in drawing $M$ independent samples 
	$\{\bm{x}^{n,i}\}_{i=1}^M$, with $\bm{x}^{n,i} = (x^{1,i}, \hdots, x^{n,i})$, from the distribution $\bm{\rho}^n$, and then in selecting a sample $\bm{x}^{n,\star}$ which satisfies
	\begin{equation}
	\label{minimumsample}
	\Vert \bm{G}_{\bm{x}^{n,\star}}- \bm{I} \Vert_2  = \min_{1 \le i \le M} \Vert\bm{G}_{\bm{x}^{n,i}}- \bm{I} \Vert_2,
	\end{equation}
	where $\bm{G}_{\bm{x}}$ denotes the empirical Gram matrix associated with a sample $\bm x $ in $\mathcal{X}^n$. If several  
	samples  $\bm{x}^{n,i}$ are solutions of the minimization problem, $\bm{x}^{n,\star}$ is selected at random among the minimizers.  
	We denote by $\bm{\rho}^{n,\star}$ the probability measure of $\bm{x}^{n,\star}$.
	The probability that the stability condition $Z_{\bm{x}^{n,\star}} = \Vert \bm{G}_{\bm{x}^{n,\star}}- \bm{I} \Vert_2 \le \delta$ is verified can be made arbitrarily high, playing on $M$, as it is shown in the following lemma (whose proof is trivial).
	\begin{lemma}
		\label{lem:resampling-proba}
		For any $\delta \in [0,1)$ and $\eta \in (0,1)$, if  $n$ satisfies \eqref{numberofsamples}, then
		\begin{equation}
		\mathbb{P}(Z_{\bm{x}^{n,{\star}}}   \le \delta) \ge 1 - \eta^M.
		\end{equation}
	\end{lemma}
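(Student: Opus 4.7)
The plan is to reduce the statement directly to Theorem \ref{th:condition-sample-size} using independence of the $M$ samples. The key observation is that the selection rule \eqref{minimumsample} chooses the sample minimizing $Z$, so
$$Z_{\bm{x}^{n,\star}} \le \delta \iff \min_{1\le i \le M} Z_{\bm{x}^{n,i}} \le \delta \iff \exists\, i \in \{1,\dots,M\},\; Z_{\bm{x}^{n,i}} \le \delta,$$
and the random tie-breaking rule is irrelevant to whether this event occurs.

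Taking complements, the event $\{Z_{\bm{x}^{n,\star}} > \delta\}$ is exactly the event that every $Z_{\bm{x}^{n,i}}$ exceeds $\delta$. Since the $\bm{x}^{n,i}$ are drawn independently from $\bm{\rho}^n$, the variables $Z_{\bm{x}^{n,i}}$ are i.i.d., and Theorem \ref{th:condition-sample-size} gives $\mathbb{P}(Z_{\bm{x}^{n,i}} > \delta) \le \eta$ for each $i$ under the sample-size condition \eqref{numberofsamples}. Hence
$$\mathbb{P}(Z_{\bm{x}^{n,\star}} > \delta) = \prod_{i=1}^M \mathbb{P}(Z_{\bm{x}^{n,i}} > \delta) \le \eta^M,$$
from which the claimed bound follows by passing to the complementary event.

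There is no real obstacle: the only things to check are that the min-selection event equals the union event above (so conditioning on how ties are broken is unnecessary), and that Theorem \ref{th:condition-sample-size} applies to each $\bm{x}^{n,i}$ individually, which it does because each is distributed as $\bm{\rho}^n$ with $n$ satisfying \eqref{numberofsamples}. This matches the author's comment that the proof is trivial.
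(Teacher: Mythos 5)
Your proof is correct and is precisely the ``trivial'' argument the authors allude to (the paper omits the proof entirely): $Z_{\bm{x}^{n,\star}}=\min_{1\le i\le M}Z_{\bm{x}^{n,i}}$ regardless of tie-breaking, and independence of the $M$ samples together with Theorem \ref{th:condition-sample-size} gives $\mathbb{P}(Z_{\bm{x}^{n,\star}}>\delta)\le \eta^M$. Nothing is missing.
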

	Therefore, we can choose a probability $\eta$ arbitrary close to $1$, so that 
	the condition \eqref{numberofsamples} does not require a large sample size $n$, and still obtain the stability condition with a probability at least $1-\eta^M$ which can be made arbitrarily close to $1$ by choosing a sufficiently large $M$.
	Even if $\bm \rho^n$ has a product structure, for $M>1,$ the distribution $\bm{\rho}^{n,\star} $ does not have a product structure, i.e.  the components of $ \bm{x}^{n,\star} = (x^{1,\star},\hdots,x^{n,\star})$ are not independent, and does not satisfy the assumptions of Theorems \ref{th:conditional-projection} and \ref{th:conditional-expectation}. In particular $\mathbb{E}( \bm{G}_{\bm{x}^{n,\star}})$ may not be equal to $\bm{I}$ and in general, $\mathbb{E}(\Vert v \Vert^2_{\bm x^{n,\star}}) \neq \Vert v \Vert^2$ for an arbitrary function $v$ when $M>1$. Therefore, a new analysis of the resulting weighted least-squares projection is required.
	\begin{remark}
		Note that 
		since the function $\bm x \mapsto \Vert\bm{G}_{\bm{x}}- \bm{I} \Vert_2$ defined on $\mathcal{X}^{n}$ is invariant through permutations  of the components of $\bm x$, we have that the components of  $\bm{x}^{n,\star}$ have the same marginal distribution.
	\end{remark}
	In order to ensure that the stability property  is verified almost surely
	we consider a sample $\widetilde{\bm x}^n$ from the distribution  $\widetilde{\bm \rho}^n$ of $\bm x^{n,\star}$ conditioned on the event
	\begin{equation}
	A_{\delta} = \{ \Vert \bm{G}_{\bm{x}^{n,{\star}}} - \bm{I} \Vert_2 \le \delta\},\label{eventAdelta}
	\end{equation}
	which is such that for  any function $f$, $\mathbb{E}(f(\widetilde{\bm x}^n)) = \mathbb{E}(f(\bm x^{n,\star}) \vert A_\delta).$ 
	A sample $\widetilde {\bm x}^n$ from the distribution $\widetilde{\bm \rho}^n$ is obtained by a simple rejection method, which consists in drawing samples $\bm x^{n,\star}$ from the distribution $\bm\rho^{n,\star}$ until $A_\delta$ is satisfied. It follows that $\mathbb{P}(Z_{\widetilde{\bm{x}}^{n}}   \le \delta) = 1 $.
	\begin{remark}
		Let $J$ be the number of trials necessary to get a sample $\bm x^{n,\star}$ verifying the stability condition $A_{\delta}$. This random variable $J$ follows a geometric distribution with a probability of success $\mathbb{P}(A_{\delta})$. Therefore $J$ is almost surely finite and 
		\begin{equation}
		\mathbb{P}(J \ge k) = (1-\mathbb{P}(A_{\delta}))^k,
		\end{equation}
		i.e. the probability to have $J$ greater than $k$ decreases exponentially with $k$. An other property of the geometric distribution is that $\mathbb{E}(J) = \frac{1}{1-\eta}$, such that the average number of trials increases when $\eta$ tends to $1$, in particular we have $\mathbb{E}(J) = 2$ for $\eta = 0.5$ and $\mathbb{E}(J) = 100$ for $\eta = 0.99$.
	\end{remark}
	Now we provide a result on the distribution of $\widetilde {\bm x}^n$ which will be later used for the analysis of the corresponding least-squares projection. 
	
	\begin{lemma}\label{lem:minimumexpectation}
		Let $\widetilde{\bm{x}}^n$ be a sample following the distribution  $\widetilde{\bm{\rho}}^n$, which is the distribution $\bm \rho^{n,\star} $ conditioned on the event $A_\delta $ defined by \eqref{eventAdelta}.  Assume that $n \ge n(\delta,\eta, m)$ for some $\eta \in (0,1)$ and $\delta \in (0,1).$
		Then for any function $v$ in $L^2_{\mu}$ and any $0<\varepsilon\le 1$, 
		\begin{equation}\label{norm-tildexn-bound-epsilon}
		\mathbb{E}(\Vert v \Vert_{\widetilde{\bm{x}}^n}^2) \le C(\varepsilon,M)(1 - \eta^M)^{-1} \mathbb{E}(\Vert v \Vert^{2/\varepsilon}_{\bm{x}^{n}})^{\varepsilon},
		\end{equation}
		with $$C(\varepsilon,M) = M  \frac{(1- \varepsilon)^{1 - \varepsilon}}{(M -\varepsilon)^{1 - \varepsilon}} \le M. $$
		In particular, for $\varepsilon=1$, 
		\begin{equation}\label{norm-tildexn-bound}
		\mathbb{E}(\Vert v \Vert_{\widetilde{\bm{x}}^n}^2) \le M(1 - \eta^M)^{-1} \Vert v \Vert^2.
		\end{equation}
		Also, if $\Vert v \Vert_{\infty,w} = \sup_{x\in \mathcal{X}} w(x)^{1/2} \vert v(x) \vert<\infty$, 
		\begin{equation}\label{norm-tildexn-bound-Linf}
		\mathbb{E}(\Vert v \Vert_{\widetilde{\bm{x}}^n}^2) \le C(\varepsilon,M)(1 - \eta^M)^{-1} \Vert v \Vert_{\infty,w}^{2-2\varepsilon}  \Vert v \Vert^{2\varepsilon}.
		\end{equation}
	\end{lemma}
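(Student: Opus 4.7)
The plan is to reduce the bound to a sharp inequality on $\mathbb{E}(\max_i Y_i)$ for $M$ i.i.d.\ non-negative variables $Y_i$, which I then establish by a quantile-function rewrite combined with Hölder's inequality.

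First, by the definition of $\widetilde{\bm\rho}^n$ as $\bm\rho^{n,\star}$ conditioned on $A_\delta$ and by non-negativity of $\Vert v\Vert_{\bm x^{n,\star}}^2$,
$$\mathbb{E}(\Vert v\Vert_{\widetilde{\bm x}^n}^2) = \mathbb{P}(A_\delta)^{-1}\mathbb{E}\bigl(\Vert v\Vert_{\bm x^{n,\star}}^2\mathds{1}_{A_\delta}\bigr)\le \mathbb{P}(A_\delta)^{-1}\mathbb{E}(\Vert v\Vert_{\bm x^{n,\star}}^2),$$
and Lemma \ref{lem:resampling-proba} (applicable since $n\ge n(\delta,\eta,m)$) gives $\mathbb{P}(A_\delta)\ge 1-\eta^M$. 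Since $\bm x^{n,\star}$ is one of the samples $\bm x^{n,1},\ldots,\bm x^{n,M}$, we have the pointwise bound $\Vert v\Vert_{\bm x^{n,\star}}^2\le \max_{i}\Vert v\Vert_{\bm x^{n,i}}^2$. Writing $Y_i:=\Vert v\Vert_{\bm x^{n,i}}^2$, the $Y_i$ are i.i.d.\ copies of $Y:=\Vert v\Vert_{\bm x^n}^2$ under $\bm\rho^n$, and the task reduces to the distribution-free inequality $\mathbb{E}(\max_i Y_i)\le C(\varepsilon,M)\mathbb{E}(Y^{1/\varepsilon})^{\varepsilon}$.

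To establish this, let $G$ denote the CDF of $Y$; then $\max_i Y_i$ has CDF $G^M$ and the change of variable $v=u^{1/M}$ in the quantile representation gives
$$\mathbb{E}(\max_i Y_i)=M\int_0^1 G^{-1}(v)\,v^{M-1}\,dv,\qquad \mathbb{E}(Y^{1/\varepsilon})=\int_0^1 G^{-1}(v)^{1/\varepsilon}\,dv.$$
Hölder's inequality on $[0,1]$ with conjugate exponents $1/\varepsilon$ and $1/(1-\varepsilon)$ yields
$$\mathbb{E}(\max_i Y_i)\le M\left(\int_0^1 G^{-1}(v)^{1/\varepsilon}\,dv\right)^{\!\varepsilon}\left(\int_0^1 v^{(M-1)/(1-\varepsilon)}\,dv\right)^{\!1-\varepsilon},$$
and the second integral evaluates to $(1-\varepsilon)/(M-\varepsilon)$, reconstituting exactly $C(\varepsilon,M)=M(1-\varepsilon)^{1-\varepsilon}/(M-\varepsilon)^{1-\varepsilon}$. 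The case $\varepsilon=1$ in \eqref{norm-tildexn-bound} follows either by the limit $C(1,M)=M$ or directly from $\mathbb{E}(\max_i Y_i)\le \sum_i\mathbb{E}(Y_i)=M\Vert v\Vert^2$ via \eqref{mean-discrete-norm}.

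Finally, for \eqref{norm-tildexn-bound-Linf} I would post-process \eqref{norm-tildexn-bound-epsilon}: each summand satisfies $w(x^i)v(x^i)^2\le \Vert v\Vert_{\infty,w}^2$, so $\Vert v\Vert_{\bm x^n}^2\le \Vert v\Vert_{\infty,w}^2$ and therefore $\Vert v\Vert_{\bm x^n}^{2/\varepsilon}\le \Vert v\Vert_{\infty,w}^{2/\varepsilon-2}\Vert v\Vert_{\bm x^n}^2$; taking expectation via \eqref{mean-discrete-norm} and raising to the power $\varepsilon$ produces the factor $\Vert v\Vert_{\infty,w}^{2-2\varepsilon}\Vert v\Vert^{2\varepsilon}$. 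The main obstacle is producing the sharp constant $C(\varepsilon,M)$: a naive union bound $\max_i Y_i^{1/\varepsilon}\le \sum_i Y_i^{1/\varepsilon}$ followed by Jensen on $x\mapsto x^\varepsilon$ yields only the weaker $M^{\varepsilon}\mathbb{E}(Y^{1/\varepsilon})^{\varepsilon}$, and it is the passage to the quantile representation with carefully chosen Hölder exponents that produces the tight $C(\varepsilon,M)$.
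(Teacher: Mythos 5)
Your proof is correct, and while the endpoint case $\varepsilon=1$ and the post-processing step for \eqref{norm-tildexn-bound-Linf} coincide with the paper's arguments, your route to the central inequality \eqref{norm-tildexn-bound-epsilon} is genuinely different. The paper keeps track of which of the $M$ candidate samples is selected: it introduces the events $B_j=\{I^\star=j\}$, uses exchangeability to reduce to $j=1$ and to get $\mathbb{P}(A_\delta\cap B_1)\ge(1-\eta^M)/M$, bounds the indicator of selection by $\mathds{1}_{\min_{2\le i\le M}Z_{\bm x^{n,i}}\ge Z_{\bm x^{n,1}}}$, whose conditional expectation given $\bm x^{n,1}$ is the $(M-1)$-th power of a (sub)uniform variate, and then applies H\"older with exponents $(1/\varepsilon,1/(1-\varepsilon))$ in the underlying probability space. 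You instead discard the selection rule entirely through the pointwise bound $\Vert v\Vert^2_{\bm x^{n,\star}}\le\max_{1\le i\le M}\Vert v\Vert^2_{\bm x^{n,i}}$ and reduce to the distribution-free statement $\mathbb{E}(\max_i Y_i)\le C(\varepsilon,M)\,\mathbb{E}(Y^{1/\varepsilon})^\varepsilon$ for i.i.d.\ non-negative $Y_i$, proved by the quantile representation $\mathbb{E}(\max_i Y_i)=M\int_0^1 G^{-1}(v)v^{M-1}dv$ and H\"older on $[0,1]$ with the same conjugate exponents; this recovers exactly the same constant $C(\varepsilon,M)$. Your version is more modular --- the key step is a self-contained order-statistics lemma with no reference to $Z_{\bm x^n}$ --- and it quietly sidesteps the paper's imprecise claim that $\mathbb{E}(\mathds{1}_{Z_{\bm x^{n,2}}>Z_{\bm x^{n,1}}}\vert\bm x^{n,1})$ is exactly uniform (which needs $Z_{\bm x^n}$ to be atomless; the identity $(G^M)^{-1}(u)=G^{-1}(u^{1/M})$ that you use holds for arbitrary distribution functions). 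What your reduction gives up is any possibility of exploiting the fact that $\bm x^{n,\star}$ minimizes $Z$ rather than being an arbitrary element of the collection; since the paper's own H\"older step does not exploit that information either, nothing is lost, and both proofs land on the same bound.
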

	\begin{proof}
		See appendix. 
	\end{proof}
	\begin{corollary}
		Let $\widetilde{\bm{x}}^n$ be a sample following the distribution $\widetilde{\bm{\rho}}^n$ and assume that $n \ge n(\delta,\eta, m)$ for some $\eta \in (0,1)$ and $\delta \in (0,1)$. For any $v \in L_{\mu}^2$, the weighted least-squares projection $Q_{V_m}^{\widetilde{\bm{x}}^n}v$ associated with the sample $\widetilde{\bm{x}}^n$ satisfies
		\begin{equation}
		\mathbb{E}(\Vert Q_{V_m}^{\widetilde{\bm{x}}^n}v \Vert^2) \le (1-\delta)^{-1}M(1 - \eta^M)^{-1} \Vert v \Vert^2.
		\end{equation}
	\end{corollary}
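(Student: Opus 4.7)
The plan is to chain together three facts, each already available in the excerpt: the almost sure norm equivalence on the conditioned sample, the contractivity of the discrete projection with respect to its own semi-norm, and the expectation bound \eqref{norm-tildexn-bound} from Lemma~\ref{lem:minimumexpectation}.

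First, I would invoke the definition of $\widetilde{\bm{\rho}}^n$: since $\widetilde{\bm x}^n$ is drawn from $\bm\rho^{n,\star}$ conditioned on the event $A_\delta$, we have $Z_{\widetilde{\bm x}^n}\le \delta$ almost surely. Hence Lemma~\ref{lem:quasi-optim-under-condition} (or equivalently the norm equivalence \eqref{norm-equiv-delta}) applies pointwise, giving, for the element $Q_{V_m}^{\widetilde{\bm x}^n}v \in V_m$,
\begin{equation*}
(1-\delta)\,\Vert Q_{V_m}^{\widetilde{\bm x}^n}v\Vert^{2} \;\le\; \Vert Q_{V_m}^{\widetilde{\bm x}^n}v\Vert^{2}_{\widetilde{\bm x}^n}.
\end{equation*}

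Next, since $Q_{V_m}^{\widetilde{\bm x}^n}$ is the orthogonal projection onto $V_m$ with respect to the semi-norm $\Vert\cdot\Vert_{\widetilde{\bm x}^n}$, it is a contraction in that semi-norm, so $\Vert Q_{V_m}^{\widetilde{\bm x}^n}v\Vert_{\widetilde{\bm x}^n}\le \Vert v\Vert_{\widetilde{\bm x}^n}$ (this is the same argument used in the proofs of Lemma~\ref{lem:quasi-optim-under-condition} and Theorem~\ref{th:L2Linf-conditioned}). Squaring and taking expectations yields
\begin{equation*}
(1-\delta)\,\mathbb{E}\bigl(\Vert Q_{V_m}^{\widetilde{\bm x}^n}v\Vert^{2}\bigr) \;\le\; \mathbb{E}\bigl(\Vert v\Vert^{2}_{\widetilde{\bm x}^n}\bigr).
\end{equation*}

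Finally, since $n\ge n(\delta,\eta,m)$ by hypothesis, Lemma~\ref{lem:minimumexpectation} applies in its $\varepsilon=1$ form \eqref{norm-tildexn-bound}, giving $\mathbb{E}(\Vert v\Vert^{2}_{\widetilde{\bm x}^n})\le M(1-\eta^M)^{-1}\Vert v\Vert^{2}$. Dividing the previous inequality by $(1-\delta)$ and substituting this bound delivers the claimed inequality. There is no real obstacle here: all the work has been done in Lemma~\ref{lem:minimumexpectation}, whose proof is deferred to the appendix; the corollary is simply the combination of stability (ensured almost surely by the rejection step) with that expectation bound.
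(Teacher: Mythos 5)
Your argument is correct and coincides with the paper's own proof: the almost sure bound $(1-\delta)\Vert Q_{V_m}^{\widetilde{\bm x}^n}v\Vert^2 \le \Vert Q_{V_m}^{\widetilde{\bm x}^n}v\Vert^2_{\widetilde{\bm x}^n}$, the contractivity of the discrete projection in its own semi-norm, and then the expectation bound \eqref{norm-tildexn-bound} from Lemma \ref{lem:minimumexpectation}. Nothing is missing.
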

	\begin{proof}
		Since $Q_{V_m}^{\widetilde{\bm{x}}^n}v \in V_m$, we have that
		\begin{equation}
		\Vert Q_{V_m}^{\widetilde{\bm{x}}^n}v\Vert^2 \le (1-\delta)^{-1} \Vert Q_{V_m}^{\widetilde{\bm{x}}^n} v \Vert^2_{\widetilde{\bm{x}}^n} \le (1-\delta)^{-1} \Vert v \Vert^2_{\widetilde{\bm{x}}^n},
		\end{equation}	
		where we have used the fact that $Q_{V_m}^{\widetilde{\bm{x}}^n}$ is an orthogonal projection with respect to the semi-norm $\Vert \cdot \Vert_{\widetilde{\bm{x}}^n}$. Taking the expectation and using \ref{norm-tildexn-bound}, we obtain
		\begin{equation}
		\mathbb{E}(\Vert Q_{V_m}^{\widetilde{\bm{x}}^n} v\Vert^2) \le (1-\delta)^{-1}M(1 - \eta^M)^{-1} \Vert v \Vert^2.
		\end{equation}
	\end{proof}
	\begin{theorem}
		\label{th:bls_accuracy}
		Let $\widetilde{\bm{x}}^n$ be a sample following the distribution  $\widetilde{\bm{\rho}}^n$ and assume that $n \ge n(\delta,\eta, m)$
		for some $\eta \in (0,1)$ and $\delta \in (0,1).$ The  
		weighted least-squares projection $ Q_{V_m}^{\widetilde {\bm x}^n}u$  associated with the sample $\widetilde{\bm{x}}^n$ satisfies the quasi-optimality property
		\begin{equation}\label{bls_accuracy-quasi-optim}
		\mathbb{E}(\Vert u - Q_{V_m}^{\widetilde {\bm x}^n}u \Vert^2) \le (1+ (1-\delta)^{-1}(1 - \eta^M)^{-1} M)\Vert u - P_{V_m}u \Vert^2.
		\end{equation}
		Also, assuming $\Vert u \Vert_{\infty,w} \le L$, we have 
		\begin{equation}\label{quasi-optim-epsilon-inf}
		\mathbb{E}(\Vert u - Q_{V_m}^{\widetilde {\bm x}^n}u \Vert^2) \le \left(\Vert u - P_{V_m}u \Vert^2 + (1-\delta)^{-1}(1 - \eta^M)^{-1}D(M,L,m,\Vert u - P_{V_m}u \Vert^2)\right) 
		\end{equation}
		where for all $\alpha \ge 0$, $D(M,L,m, \alpha) = \inf_{0 < \varepsilon \le 1}\widetilde{D}(M,L,m, \alpha, \varepsilon)$, with 
		$$\widetilde{D}(M,L,m, \alpha, \varepsilon) := C(\varepsilon,M) (L(1+c_m))^{2-2\varepsilon}\alpha^{\varepsilon}.$$
		Here, $C(\varepsilon,M)$ is the constant defined in Lemma \ref{lem:minimumexpectation} and $c_m$ the supremum of $\Vert P_{V_m} v \Vert_{\infty,w}$ over functions $v$ such that $\Vert v \Vert_{\infty,w} \le 1$.
	\end{theorem}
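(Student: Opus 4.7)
The plan is to combine the deterministic quasi-optimality bound of Lemma~\ref{lem:quasi-optim-under-condition} with the two moment bounds of Lemma~\ref{lem:minimumexpectation} applied to the residual $u - P_{V_m} u$. Since $\widetilde{\bm x}^n$ is obtained by rejection from $\bm\rho^{n,\star}$ conditioned on the event $A_\delta$, we have $Z_{\widetilde{\bm x}^n} \le \delta$ almost surely, so Lemma~\ref{lem:quasi-optim-under-condition} applies pointwise and gives
\begin{equation*}
\Vert u - Q_{V_m}^{\widetilde{\bm x}^n} u \Vert^2 \le \Vert u - P_{V_m} u \Vert^2 + (1-\delta)^{-1} \Vert u - P_{V_m} u \Vert^2_{\widetilde{\bm x}^n}.
\end{equation*}
Taking expectations, the whole task reduces to bounding $\mathbb{E}(\Vert u - P_{V_m} u \Vert^2_{\widetilde{\bm x}^n})$ in two different ways.

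For \eqref{bls_accuracy-quasi-optim}, I would simply apply Lemma~\ref{lem:minimumexpectation} in the form \eqref{norm-tildexn-bound} (i.e.\ $\varepsilon=1$) with $v = u - P_{V_m} u$, yielding
\begin{equation*}
\mathbb{E}(\Vert u - P_{V_m} u \Vert^2_{\widetilde{\bm x}^n}) \le M(1-\eta^M)^{-1} \Vert u - P_{V_m} u \Vert^2,
\end{equation*}
and plug this into the bound above; this directly produces the constant $1 + (1-\delta)^{-1}(1-\eta^M)^{-1}M$.

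For \eqref{quasi-optim-epsilon-inf}, the idea is to use the sharper interpolation-type bound \eqref{norm-tildexn-bound-Linf} applied to $v = u - P_{V_m} u$, which requires controlling the weighted supremum norm of the residual. By the triangle inequality and the definition of $c_m$ (the operator norm of $P_{V_m}$ from $\Vert\cdot\Vert_{\infty,w}$ to itself on the unit ball),
\begin{equation*}
\Vert u - P_{V_m} u \Vert_{\infty,w} \le \Vert u \Vert_{\infty,w} + \Vert P_{V_m} u \Vert_{\infty,w} \le L + c_m L = L(1+c_m).
\end{equation*}
Inserting this into \eqref{norm-tildexn-bound-Linf} with $\alpha = \Vert u - P_{V_m} u \Vert^2$ gives
\begin{equation*}
\mathbb{E}(\Vert u - P_{V_m} u \Vert^2_{\widetilde{\bm x}^n}) \le C(\varepsilon,M)(1-\eta^M)^{-1}(L(1+c_m))^{2-2\varepsilon} \alpha^{\varepsilon} = (1-\eta^M)^{-1}\widetilde{D}(M,L,m,\alpha,\varepsilon)
\end{equation*}
for every $\varepsilon \in (0,1]$. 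Taking the infimum over $\varepsilon$ produces $D(M,L,m,\alpha)$ and, combined with the deterministic bound, yields \eqref{quasi-optim-epsilon-inf}.

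There is no real obstacle here: the two statements are essentially the two moment bounds of Lemma~\ref{lem:minimumexpectation} fed into Lemma~\ref{lem:quasi-optim-under-condition}. The only non-cosmetic step is the triangle inequality controlling $\Vert u - P_{V_m} u \Vert_{\infty,w}$ via $c_m$; any delicacy is thus pushed into Lemma~\ref{lem:minimumexpectation} itself (proved in the appendix) and into the boundedness hypothesis $\Vert u\Vert_{\infty,w}\le L$ needed to make $c_m$ useful.
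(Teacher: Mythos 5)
Your proposal is correct and follows essentially the same route as the paper: the almost-sure bound from Lemma \ref{lem:quasi-optim-under-condition}, then Lemma \ref{lem:minimumexpectation} with $\varepsilon=1$ for \eqref{bls_accuracy-quasi-optim} and the bound \eqref{norm-tildexn-bound-Linf} together with $\Vert u - P_{V_m}u\Vert_{\infty,w}\le (1+c_m)\Vert u\Vert_{\infty,w}\le L(1+c_m)$ for \eqref{quasi-optim-epsilon-inf}. The only difference is cosmetic ordering: the paper first states the general $\varepsilon$-parametrized bound and then specializes, while you treat the two cases directly.
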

	\begin{proof}
		From Lemma \ref{lem:quasi-optim-under-condition}, we have that   
		\begin{equation*}
		\Vert u - Q_{V_m}^{\widetilde {\bm x}^n}u \Vert^2  \le  \Vert u - P_{V_m}u \Vert^2 + (1-\delta)^{-1}\Vert u -P_{V_m}u \Vert_{\widetilde{\bm{x}}^n}^2
		\end{equation*}
		holds almost surely, 
		and from Lemma \ref{lem:minimumexpectation}, we have that
		\begin{equation*}
		\mathbb{E}(\Vert u - P_{V_m}u \Vert_{\widetilde{\bm{x}}^n}^2) \le C(\varepsilon,M)(1 - \eta^M)^{-1}\mathbb{E}(\Vert u - P_{V_m} u \Vert^{2/\varepsilon}_{\bm{x}^{n}})^{\varepsilon}
		\end{equation*}
		for all $\varepsilon \in (0,1].$ 
		Combining the above inequalities and then taking the infimum over $\varepsilon,$ we obtain  
		\begin{equation}\label{quasi-optim-epsilon}
		\mathbb{E}(\Vert u - Q_{V_m}^{\widetilde {\bm x}^n}u \Vert^2) \le \Vert u - P_{V_m}u \Vert^2 + (1-\delta)^{-1}(1 - \eta^M)^{-1}\inf_{0 < \varepsilon \le 1} C(\varepsilon,M)  \mathbb{E} \left(\Vert u - P_{V_m} u \Vert^{2/\varepsilon}_{\bm{x}^n} \right)^{\varepsilon}.
		\end{equation}
		The particular case $\varepsilon=1$ yields \eqref{bls_accuracy-quasi-optim}. 
		The second property  \eqref{quasi-optim-epsilon-inf} is simply deduced from \eqref{quasi-optim-epsilon} by using the property \eqref{norm-tildexn-bound-Linf} of Lemma \ref{lem:minimumexpectation} and by noting that $\Vert u - P_{V_m}u \Vert_{\infty,w} \le (1+c_m) \Vert u
		\Vert_{\infty,w}.$
	\end{proof}
	\begin{remark}
		\label{rem:bound_cm}
		The constant $c_m$ in Theorem \ref{th:bls_accuracy} is such that $c_m \le m$. Indeed, $P_{V_m} v(x) = \sum_{i=1}^m a_i \varphi_i(x)$ with $$a_i = (v,\varphi_i) = \int v(x) \varphi_i(x) d\mu(x) = \int v(x) \varphi_i(x) w(x)d\rho(x),$$ so that 
		\begin{equation*}
		\begin{aligned}
		\vert a_i \vert & \le \Vert v \Vert_{\infty,w} \int \vert \varphi_i(x) \vert w(x)^{1/2} d\rho(x) \le \Vert v \Vert_{\infty,w} (\int \varphi_i(x)^2w(x)d\rho(x))^{1/2} = \Vert v \Vert_{\infty,w},
		\end{aligned}
		\end{equation*}
		where we have used Cauchy-Schwarz inequality. Therefore, 
		\begin{equation*}
		\begin{aligned}
		\Vert P_{V_m} v \Vert_{\infty,w} & \le \Vert v \Vert_{\infty,w} \sup_{x\in \mathcal{X}} w(x)^{1/2} \sum_{i=1}^m \vert \varphi_i(x) \vert \\
		& \le  \Vert v \Vert_{\infty,w} \sup_{x\in \mathcal{X}} w(x)^{1/2} m^{1/2} (\sum_{i=1}^m  \varphi_i(x)^2)^{1/2} = m \Vert v \Vert_{\infty,w}.
		\end{aligned}
		\end{equation*}
	\end{remark}
	\begin{remark}
		\textbf{About the constant $D(L,M,m, \alpha)$.}\\
		The value of $\varepsilon$ that minimizes $\widetilde{D}(M,L,m,\alpha, \varepsilon)$ can be shown to be
		$$\varepsilon^{\star}  =\frac{1 - M W(\exp(-1 -2\log(L(1+c_m)) + 2\log(\alpha))}{1 -  W(\exp(-1 -2\log(L(1+c_m)) + 2\log(\alpha)) }$$
		where $x \mapsto W(x)$ is the Lambert function which represents the solution $y$ of the equation $ y\exp(y) = x$.\\
		In Figures \ref{fig:borne_diff_m} and \ref{fig:borne_diff_M}, we illustrate the fact that the property \eqref{quasi-optim-epsilon-inf} may improve \eqref{bls_accuracy-quasi-optim} if $D(M,L,m,\Vert u - P_{V_m}u \Vert^2) \le M\Vert u - P_{V_m}u \Vert^2$, for some conditions on $M,L,m$. The legend "Initial bound" refers to the bound presented in \ref{bls_accuracy-quasi-optim}, and the legend "Improved bound" refers to the bound presented in \ref{quasi-optim-epsilon-inf}.
		\begin{center}
			\begin{figure}
				\begin{subfigure}[b]{0.49\textwidth}
					\centering
					\includegraphics[scale =0.8]{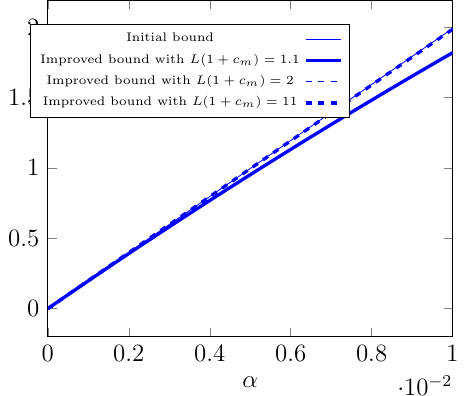}
					\caption{$M=100$, $\delta = 0.5$, $\eta = 0.01^{1/M}$ }
					\label{fig:borne_diff_m}	
				\end{subfigure}
				\begin{subfigure}[b]{0.49\textwidth}
					\centering
					\includegraphics[scale =0.8]{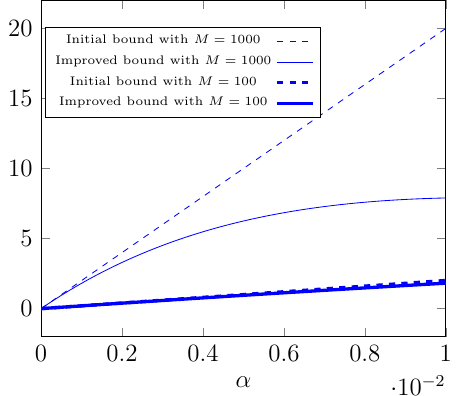}
					\caption{$L(1+c_m)=2$, $\delta = 0.5$, $\eta = 0.01^{1/M}$ }
					\label{fig:borne_diff_M}
				\end{subfigure}
				\caption{Improvement of the bound for different values of $m$ and $M$.}
			\end{figure}
		\end{center}
		The $x$-axis represents the best approximation error, so that for a given $L$ and a given $c_m$, the left part of the curve corresponds to functions $u$ which can be well approximated in $V_m$ whereas on the contrary, the right part of the curve corresponds to functions which are not well approximated in $V_m$. We observe that the bound from \ref{quasi-optim-epsilon-inf} improves the bound obtained with \ref{bls_accuracy-quasi-optim} when $M$ is high $(M \ge 1000)$, and when $L(1+c_m)$ is small $(L(1+c_m) \le 1.1)$.
	\end{remark}
	\subsection{Subsampling}
	\label{sec:subsampling}
	Although the resampling enables us to choose $\delta$ and $\eta$ such that $n$ is smaller than with the initial strategy from \cite{Cohen2016}, the value of $n$ may still be high compared to an interpolation method. Therefore, to further decrease the sample size, for each generated sample $\widetilde{\bm{x}}^n$, we propose to select a subsample which still verifies the stability condition.\\
	We start with a sample $\widetilde{\bm{x}}^n = (\widetilde{x}^1,\hdots,\widetilde{x}^n)$ satisfying $  \Vert \bm{G}_{\widetilde{\bm{x}}^n} - \bm{I} \Vert_2 \le \delta$ and then select a subsample $\widetilde{\bm{x}}^n_K = (\widetilde{x}^k)_{k \in K}$ with $K \subset \{1, \hdots, n\}$ such that the empirical Gram matrix $\bm{G}_{\widetilde{\bm{x}}^n_K} = \frac{1}{\# K}\sum_{k \in K} w(\widetilde{x}^k)\bm{\varphi}(\widetilde{x}^k)\otimes \bm{\varphi}(\widetilde{x}^k)$ still satisfies $$\Vert \bm{G}_{{\widetilde{\bm{x}}^n}_K} - \bm{I}\Vert_2 \le \delta.$$ In practice, the set $K$ is constructed by a greedy procedure. We start with $K = \{1, \hdots, n\}$. Then at each step of the greedy procedure, we select $k^{\star}$ in $K$ such that
	\begin{equation}
	\Vert \bm{G}_{{\widetilde{\bm{x}}^n}_{K \setminus \{k^{\star}\}}} - \bm{I} \Vert_2 = \min_{k \in K} \Vert \bm{G}_{{\widetilde{\bm{x}}^n}_{K \setminus \{k\}}} - \bm{I} \Vert_2.
	\end{equation}
	If $\Vert \bm{G}_{{\widetilde{\bm{x}}^n_{K \setminus \{k^{\star}\}}}}  - \bm{I} \Vert_2 \le \delta$ and $\# K > n_{min}$ then $k^{\star}$ is removed from $K$. Otherwise, the algorithm is stopped. We denote by $\widetilde{\bm{\rho}}^n_K$ the distribution of the sample  $\widetilde{\bm{x}}^n_K$ produced by this greedy algorithm.
	
	\begin{theorem}
		\label{th:s-BLS-accuracy}
		Assume $n \ge n(\delta,\eta, m)$ for some $\eta \in (0,1)$ and $\delta \in (0,1),$ and let 
		$\widetilde{\bm{x}}^n_K$ be a sample produced by the greedy algorithm with $\# K \ge n_{min}$.
		The weighted least-squares projection  
		$ Q_{V_m}^{\widetilde{\bm{x}}^n_K}u$  associated with the sample $\widetilde{\bm{x}}^n_K$ satisfies the quasi-optimality property
		\begin{equation}\label{quasi-optim-greedy}
		\mathbb{E}(\Vert u - Q_{V_m}^{\widetilde{\bm{x}}^n_K}u \Vert^2) \le (1+  \frac{n}{n_{min}}(1-\delta)^{-1}(1 - \eta^M)^{-1} M)\Vert u - P_{V_m}u \Vert^2.
		\end{equation}
		Also, assuming $\Vert u \Vert_{\infty,w} \le L$ , we have 
		\begin{equation}\label{quasi-optim-epsilon-inf-greedy}
		\mathbb{E}(\Vert u - Q_{V_m}^{\widetilde {\bm x}^n_K}u \Vert^2) \le \Vert u - P_{V_m}u \Vert^2 + \frac{n}{n_{min}}(1-\delta)^{-1}(1 - \eta^M)^{-1}D(M,L,m,\Vert u - P_{V_m}u \Vert^2)
		\end{equation}
		where $D(M,L,m,\Vert u - P_{V_m}u \Vert^2)$ is defined in Theorem \ref{th:s-BLS-accuracy}.
	\end{theorem}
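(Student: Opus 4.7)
The plan is to follow the same outline as the proof of Theorem \ref{th:bls_accuracy}, with the extra ingredient being a control of the discrete semi-norm on the subsample $\widetilde{\bm{x}}^n_K$ in terms of the semi-norm on the full sample $\widetilde{\bm{x}}^n$. First, the greedy algorithm is constructed so that it stops only when $\Vert \bm{G}_{\widetilde{\bm{x}}^n_K} - \bm{I}\Vert_2 \le \delta$, so Lemma \ref{lem:quasi-optim-under-condition} applies directly to $\widetilde{\bm{x}}^n_K$ and yields, almost surely,
$$
\Vert u - Q_{V_m}^{\widetilde{\bm{x}}^n_K} u \Vert^2 \le \Vert u - P_{V_m} u \Vert^2 + (1-\delta)^{-1} \Vert u - P_{V_m} u \Vert_{\widetilde{\bm{x}}^n_K}^2.
$$

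The key new observation is that since each term in the discrete semi-norm is non-negative and $\#K \ge n_{min}$, for every $v \in L^2_\mu$,
$$
\Vert v \Vert_{\widetilde{\bm{x}}^n_K}^2 = \frac{1}{\#K}\sum_{k \in K} w(\widetilde{x}^k) v(\widetilde{x}^k)^2 \le \frac{1}{n_{min}} \sum_{i=1}^{n} w(\widetilde{x}^i) v(\widetilde{x}^i)^2 = \frac{n}{n_{min}} \Vert v \Vert_{\widetilde{\bm{x}}^n}^2.
$$
Applying this with $v = u - P_{V_m} u$ and taking expectation, I obtain
$$
\mathbb{E}(\Vert u - Q_{V_m}^{\widetilde{\bm{x}}^n_K} u \Vert^2) \le \Vert u - P_{V_m} u \Vert^2 + \frac{n}{n_{min}}(1-\delta)^{-1} \mathbb{E}(\Vert u - P_{V_m} u \Vert_{\widetilde{\bm{x}}^n}^2).
$$

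It then remains to bound $\mathbb{E}(\Vert u - P_{V_m} u \Vert_{\widetilde{\bm{x}}^n}^2)$, for which I would invoke Lemma \ref{lem:minimumexpectation}. Using \eqref{norm-tildexn-bound} (the case $\varepsilon = 1$) with $v = u - P_{V_m} u$ yields the first bound \eqref{quasi-optim-greedy}, while applying \eqref{norm-tildexn-bound-Linf}, using $\Vert u - P_{V_m} u \Vert_{\infty,w} \le (1+c_m) L$ as in the proof of Theorem \ref{th:bls_accuracy}, and taking the infimum over $\varepsilon \in (0,1]$, produces $D(M,L,m,\Vert u - P_{V_m}u \Vert^2)$ and hence \eqref{quasi-optim-epsilon-inf-greedy}. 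The argument is essentially a straightforward adaptation of the proof of Theorem \ref{th:bls_accuracy}; the only step that requires specific attention is the subsampling estimate above, and this is immediate from non-negativity of the summands and from the stopping criterion of the greedy procedure. I do not anticipate any genuine obstacle, and the main care is just to track the multiplicative factor $n/n_{min}$ throughout.
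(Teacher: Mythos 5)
Your proposal is correct and follows essentially the same route as the paper's own proof: Lemma \ref{lem:quasi-optim-under-condition} applied to the subsample, the elementary bound $\Vert v \Vert_{\widetilde{\bm{x}}^n_K}^2 \le \frac{n}{\#K}\Vert v\Vert_{\widetilde{\bm{x}}^n}^2$ combined with $\#K \ge n_{min}$, and then Lemma \ref{lem:minimumexpectation} with $\varepsilon=1$ for the first bound and the weighted-supremum variant for the second. No gaps.
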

	\begin{proof} 
		Since $Z_{\widetilde{\bm{x}}^n_K}\le \delta$, from Lemma \ref{lem:quasi-optim-under-condition}, we have that for any $v \in V_m$, the least-squares projection associated with ${\widetilde{\bm{x}}^n_K}$ satisfies 
		\begin{equation}
		\begin{aligned}
		\Vert u - Q_{V_m}^{\widetilde{\bm{x}}^n_K}u \Vert^2 & \le \Vert u - P_{V_m} u \Vert^2 + (1-\delta)^{-1} \Vert  u - P_{V_m} u  \Vert_{\widetilde{\bm{x}}^n_K}^2\\
		& \le \Vert u -  P_{V_m} u \Vert^2 + (1-\delta)^{-1}\frac{n}{\#K} \Vert u -  P_{V_m} u  \Vert_{\widetilde{\bm{x}}^n}^2,\\
		\end{aligned}
		\end{equation}
		where the second inequality simply results from 
		$$ \Vert v \Vert_{\widetilde{\bm{x}}^n_K}^2 = \frac{1}{\# K}\sum_{k\in K}w(\widetilde x^k) v(\widetilde x^k)^2 \le \frac{1}{\# K} \sum_{k =1}^n w(\widetilde x^k) v(\widetilde x^k)^2 =  \frac{n}{\# K} \Vert v \Vert_{\widetilde {\bm x}^n}^2.$$ 
		Therefore, since $\#K \ge n_{min}$, we obtain from Lemma \ref{lem:minimumexpectation} that
		\begin{equation*}\label{quasi-optim-epsilon-greedy}
		\mathbb{E}(\Vert u - Q_{V_m}^{\widetilde{\bm{x}}^n_K}u \Vert^2) \le \Vert u - P_{V_m}u \Vert^2 + \frac{n}{n_{min}} (1-\delta)^{-1}(1 - \eta^M)^{-1}\inf_{0 < \varepsilon \le 1} C(\varepsilon,M) \mathbb{E} \left(\Vert u - P_{V_m} u \Vert^{\frac{2}{\varepsilon}}_{\bm{x}^n} \right)^{\varepsilon}.
		\end{equation*}
		The particular case $\varepsilon=1$ yields the first property. For the second property, the proof follows the one of 
		the property \eqref{quasi-optim-epsilon-inf} in Theorem \ref{th:bls_accuracy}.
	\end{proof}
	\begin{corollary}
		\label{cor:bound_optimal_proj_with_subs}
		Assume $n \ge n(\delta,\eta, m)$ for some $\eta \in (0,1)$ and $\delta \in (0,1),$ and let 
		$\widetilde{\bm{x}}^n_K$ be a sample produced by the greedy algorithm with $\# K \ge n_{min}$.
		The weighted least-squares projection  
		$ Q_{V_m}^{\widetilde{\bm{x}}^n_K}u$  associated with the sample $\widetilde{\bm{x}}^n_K$ satisfies
		\begin{equation}
		\mathbb{E}(\Vert Q_{V_m}^{\widetilde{\bm{x}}^n_K}v \Vert^2) \le (1-\delta)^{-1}M(1 - \eta^M)^{-1}\frac{n}{n_{min}} \Vert v \Vert^2.
		\end{equation}
	\end{corollary}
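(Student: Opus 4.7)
The plan is to mirror the proof of the earlier corollary (bounding $\mathbb{E}(\|Q_{V_m}^{\widetilde{\bm{x}}^n}v\|^2)$) and insert the subsampling loss factor $n/n_{min}$ exactly as in the proof of Theorem \ref{th:s-BLS-accuracy}. Since the greedy procedure terminates only when $Z_{\widetilde{\bm{x}}^n_K} = \|\bm{G}_{\widetilde{\bm{x}}^n_K}-\bm{I}\|_2 \le \delta$, the stability condition holds almost surely, so Lemma \ref{lem:quasi-optim-under-condition} applies to the sample $\widetilde{\bm{x}}^n_K$.

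First I would observe that $Q_{V_m}^{\widetilde{\bm{x}}^n_K}v \in V_m$, so by the norm equivalence \eqref{norm-equiv-delta} with $\bm{x}^n$ replaced by $\widetilde{\bm{x}}^n_K$,
\begin{equation*}
\|Q_{V_m}^{\widetilde{\bm{x}}^n_K}v\|^2 \le (1-\delta)^{-1}\|Q_{V_m}^{\widetilde{\bm{x}}^n_K}v\|^2_{\widetilde{\bm{x}}^n_K}.
\end{equation*}
Next, using that $Q_{V_m}^{\widetilde{\bm{x}}^n_K}$ is the orthogonal projector onto $V_m$ with respect to the semi-norm $\|\cdot\|_{\widetilde{\bm{x}}^n_K}$, I would write
$\|Q_{V_m}^{\widetilde{\bm{x}}^n_K}v\|_{\widetilde{\bm{x}}^n_K} \le \|v\|_{\widetilde{\bm{x}}^n_K}$.

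The key additional step (borrowed from the proof of Theorem \ref{th:s-BLS-accuracy}) is to pass from the subsample norm to the full-sample norm: since the weights $w(\widetilde{x}^k)v(\widetilde{x}^k)^2$ are non-negative and $\#K \ge n_{min}$,
\begin{equation*}
\|v\|^2_{\widetilde{\bm{x}}^n_K} = \frac{1}{\#K}\sum_{k\in K} w(\widetilde{x}^k)v(\widetilde{x}^k)^2 \le \frac{n}{\#K}\cdot\frac{1}{n}\sum_{k=1}^n w(\widetilde{x}^k)v(\widetilde{x}^k)^2 \le \frac{n}{n_{min}}\|v\|^2_{\widetilde{\bm{x}}^n}.
\end{equation*}

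Finally I would take expectations and invoke \eqref{norm-tildexn-bound} of Lemma \ref{lem:minimumexpectation}, which gives $\mathbb{E}(\|v\|^2_{\widetilde{\bm{x}}^n}) \le M(1-\eta^M)^{-1}\|v\|^2$, to conclude
\begin{equation*}
\mathbb{E}(\|Q_{V_m}^{\widetilde{\bm{x}}^n_K}v\|^2) \le (1-\delta)^{-1}\frac{n}{n_{min}}M(1-\eta^M)^{-1}\|v\|^2.
\end{equation*}
No step is really delicate here; the only point meriting care is to note that the almost-sure stability $Z_{\widetilde{\bm{x}}^n_K}\le\delta$ guaranteed by the greedy stopping rule lets us pull $(1-\delta)^{-1}$ out of the expectation without needing any further conditioning, so the three ingredients (norm equivalence, optimality of the weighted projector, subsampling loss factor, and then Lemma \ref{lem:minimumexpectation}) combine multiplicatively to give the stated bound.
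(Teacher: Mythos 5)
Your proof is correct and follows essentially the same route as the paper's: norm equivalence from the almost-sure stability $Z_{\widetilde{\bm{x}}^n_K}\le\delta$, the contraction property of the weighted projector, the factor $n/\#K \le n/n_{min}$ from dropping to the subsample norm, and finally \eqref{norm-tildexn-bound} of Lemma \ref{lem:minimumexpectation} after taking expectations. No differences worth noting.
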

	\begin{proof}
		Since $Q_{V_m}^{\widetilde{\bm{x}}^n_K}v \in V_m$, we have that
		\begin{equation}
		\Vert Q_{V_m}^{\widetilde{\bm{x}}^n_K}v\Vert^2 \le (1-\delta)^{-1} \Vert Q_{V_m}^{\widetilde{\bm{x}}^n_K} v \Vert^2_{\widetilde{\bm{x}}^n_K} \le (1-\delta)^{-1} \Vert  v \Vert^2_{\widetilde{\bm{x}}^n_K} \le (1-\delta)^{-1}\frac{n}{\#K} \Vert v \Vert^2_{\widetilde{\bm{x}}^n},
		\end{equation}	
		where we have used the fact that $Q_{V_m}^{\widetilde{\bm{x}}^n_K}$ is an orthogonal projection with respect to the semi-norm $\Vert \cdot \Vert_{\widetilde{\bm{x}}^n_K}$ and the fact that $ \Vert v \Vert_{\widetilde{\bm{x}}^n_K}^2 \le \frac{n}{\# K} \Vert v \Vert_{\widetilde {\bm x}^n}$. Taking the expectation, using \ref{norm-tildexn-bound} and assuming that $\# K \le n_{min}$, we obtain
		\begin{equation}
		\mathbb{E}(\Vert Q_{V_m}^{\widetilde{\bm{x}}^n_K} v\Vert^2) \le (1-\delta)^{-1}M(1 - \eta^M)^{-1}\frac{n}{n_{min}} \Vert v \Vert^2.
		\end{equation}
	\end{proof}
	If we set $n_{min} =m$, it may happen that the algorithm runs until $\#K =m$, the interpolation regime. Choosing $n \ge n(\delta,\eta,m)$ then yields a quasi-optimality constant depending on $\log(m)$. It has to be compared with the optimal behaviour of the Lebesgue constant for polynomial interpolation in one dimension.  
	If we choose  $n_{min} = n / \beta$ for some fixed $\beta \ge 1$ independent of $m$, then we have $\frac{n}{n_{min}} \le  \beta$ and a quasi-optimality constant independent of $m$ in \eqref{quasi-optim-epsilon-greedy}, but the algorithm may stop before reaching the interpolation regime $(n=m)$.
	\begin{remark}
		Concerning, the greedy subsampling, a direct approach to remove a point is to calculate the norm of $\Vert \bm{G}_{{\widetilde{\bm{x}}^n}_{K \setminus \{k\}}} - \bm{I} \Vert_2$ for each $k \in K$. However, it involves to calculate this norm for $\#K$ points and this each time a point is removed. In the Algorithm \ref{alg:greedy_strategy} we present a method which enables us to choose $k^{\star}$ by performing simple matrix multiplications. Indeed, knowing the eigenvalues of a symmetric matrix, there exists bounds on the eigenvalues of a rank-one update of this matrix (see \cite{Bunch1978} and \cite{Golub1973} for more details, as well as the more recent results from \cite{Benasseni2011} that we use in practice).
	\end{remark}
	\section{The noisy case}
	\label{sec:noisy_case}
	We here consider the case where the observations are polluted with a noise, which is modeled by a random variable $e$. More precisely the observed data take the form
	$$y^i = u(\widetilde{x}^i) + e^i, $$
	where $\{e^i \}_{i \in K}$ are i.i.d realizations of the random variable $e$ and $\{\widetilde{x}^i\}_{i \in K} = \widetilde{\bm{x}}^n_K$ are the points built with the boosted least-squares method.  
	We assume the noise is independent from $\widetilde{\bm{x}}^n$ and centered $\mathbb{E}(e) = 0$ and with bounded variance $\sigma^2 =  \mathbb{E}(\vert e \vert^2 ) < \infty$. More general cases could be considered as in \cite{Cohen2013} or \cite{MiglioNobileTempone2015}.\\
	The weighted discrete least-squares projection of $u$ over $V_m$ is defined by
	\begin{equation}
	u^{\widetilde{\bm{x}}^n_K} := \arg\min_{v \in {V_m}} \frac{1}{\#K} \sum_{i \in K} w(\widetilde{x}^i)(y^i- v(\widetilde{x}^i)) ^2.
	\end{equation}
	\begin{theorem}
		Assume $n \ge n(\delta,\eta, m)$ for some $\eta \in (0,1)$ and $\delta \in (0,1),$ and let 
		$\widetilde{\bm{x}}^n_K$ be a sample produced by the greedy algorithm with $\# K \ge n_{min}$.
		The weighted least-squares projection  
		$u^{\widetilde{\bm{x}}^n_K}$  associated with the sample $\widetilde{\bm{x}}^n_K$ and the data affected by the noise $e$, satisfies 
		\begin{equation}\label{quasi-optim-greedy-noisy}\small
		\mathbb{E}(\Vert u - u^{\widetilde{\bm{x}}^n_K} \Vert^2) \le (1+  \frac{2n}{n_{min}}(1-\delta)^{-1}(1 - \eta^M)^{-1} M)\Vert u - P_{V_m}u \Vert^2 + \frac{2\sigma^2mn}{n_{min}^2}(1-\delta)^{-1}(1-\eta^M)^{-1}M .
		\end{equation}
	\end{theorem}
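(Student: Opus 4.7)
The plan is to exploit the linearity of the least-squares problem to split
$u^{\widetilde{\bm x}^n_K} = Q_{V_m}^{\widetilde{\bm x}^n_K} u + \widetilde{e},$
where $\widetilde{e}\in V_m$ is the weighted least-squares projection of the noise vector $(e^i)_{i\in K}$ for the semi-norm $\Vert\cdot\Vert_{\widetilde{\bm x}^n_K}$, and then follow the same Pythagorean path as in Theorem~\ref{th:s-BLS-accuracy} with an additional noise contribution. Starting from
$\Vert u - u^{\widetilde{\bm x}^n_K}\Vert^2 = \Vert u - P_{V_m}u\Vert^2 + \Vert P_{V_m}u - u^{\widetilde{\bm x}^n_K}\Vert^2,$
the norm equivalence of Lemma~\ref{lem:quasi-optim-under-condition} (which holds almost surely since $Z_{\widetilde{\bm x}^n_K}\le\delta$), the identity $P_{V_m}u - u^{\widetilde{\bm x}^n_K} = -Q_{V_m}^{\widetilde{\bm x}^n_K}(u - P_{V_m}u) - \widetilde{e}$, the triangle inequality in $\Vert\cdot\Vert_{\widetilde{\bm x}^n_K}$ combined with $(a+b)^2\le 2a^2+2b^2$, and the projection property of $Q_{V_m}^{\widetilde{\bm x}^n_K}$ should deliver the almost-sure bound
$\Vert u - u^{\widetilde{\bm x}^n_K}\Vert^2 \le \Vert u - P_{V_m}u\Vert^2 + 2(1-\delta)^{-1}\Vert u - P_{V_m}u\Vert^2_{\widetilde{\bm x}^n_K} + 2(1-\delta)^{-1}\Vert\widetilde{e}\Vert^2_{\widetilde{\bm x}^n_K}.$

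Taking expectation, the first discrete-norm term is handled exactly as in the proof of Theorem~\ref{th:s-BLS-accuracy}: using $\Vert v\Vert^2_{\widetilde{\bm x}^n_K}\le (n/\#K)\Vert v\Vert^2_{\widetilde{\bm x}^n}$, the bound $\#K\ge n_{min}$ and Lemma~\ref{lem:minimumexpectation} with $v = u - P_{V_m}u$ and $\varepsilon=1$ reproduces the first summand of the claim. The main new ingredient is the estimate of $\mathbb{E}(\Vert\widetilde{e}\Vert^2_{\widetilde{\bm x}^n_K})$. Conditioning on $\widetilde{\bm x}^n_K$ and writing $\widetilde{e} = \sum_j c_j \varphi_j$ with $\bm c = \bm G_{\widetilde{\bm x}^n_K}^{-1}\bm b$ and $\bm b_j = \frac{1}{\#K}\sum_{i\in K}w(\widetilde x^i)\varphi_j(\widetilde x^i)e^i$, the independence and centering of the noise give $\mathbb{E}(\bm b \bm b^T\mid\widetilde{\bm x}^n_K) = \frac{\sigma^2}{\#K}\bm B$ with $\bm B := \frac{1}{\#K}\sum_{i\in K}w(\widetilde x^i)^2 \bm\varphi(\widetilde x^i)\bm\varphi(\widetilde x^i)^T$, and therefore
$\mathbb{E}(\Vert\widetilde{e}\Vert^2_{\widetilde{\bm x}^n_K}\mid \widetilde{\bm x}^n_K) = \frac{\sigma^2}{\#K}\mathrm{tr}(\bm G_{\widetilde{\bm x}^n_K}^{-1}\bm B).$

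At this point the key identity $w(x)\Vert\bm\varphi(x)\Vert^2 = m$ (the defining property of the optimal weight) tames $\bm B$: it yields $\mathrm{tr}(\bm B) = \frac{m}{\#K}\sum_{i\in K}w(\widetilde x^i)$, and combined with $\bm G_{\widetilde{\bm x}^n_K}^{-1}\preceq (1-\delta)^{-1}\bm I$ gives $\mathrm{tr}(\bm G_{\widetilde{\bm x}^n_K}^{-1}\bm B)\le (1-\delta)^{-1}\frac{m}{\#K}\sum_{i\in K}w(\widetilde x^i)$. The sample-dependent sum is then removed by bounding $\sum_{i\in K}w(\widetilde x^i)\le \sum_{i=1}^n w(\widetilde x^i) = n\Vert 1\Vert^2_{\widetilde{\bm x}^n}$ and applying Lemma~\ref{lem:minimumexpectation} to the constant function $v\equiv 1$, which gives $\mathbb{E}(\Vert 1\Vert^2_{\widetilde{\bm x}^n})\le M(1-\eta^M)^{-1}$ (since $\Vert 1\Vert = 1$). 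Combining this trace estimate with $\#K\ge n_{min}$ and the outer factor $2(1-\delta)^{-1}$ produces the noise summand of \eqref{quasi-optim-greedy-noisy}. The only delicate point is the trace computation, together with the observation that the identity $w\Vert\bm\varphi\Vert^2 = m$ is what controls the otherwise unbounded weight appearing inside $\bm B$, and that the choice $v\equiv 1$ in Lemma~\ref{lem:minimumexpectation} is exactly what converts $\mathbb{E}(\sum_{i} w(\widetilde x^i))$ into $nM(1-\eta^M)^{-1}$; all other steps are standard bookkeeping on top of Theorem~\ref{th:s-BLS-accuracy}.
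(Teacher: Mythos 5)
Your proposal follows essentially the same route as the paper: the Pythagorean split, the decomposition of $u^{\widetilde{\bm x}^n_K}$ into the noiseless projection plus the projected noise $\widetilde e$, the reduction of the noiseless part to the argument of Theorem \ref{th:s-BLS-accuracy}, the variance computation using independence and centering of the noise, the identity $w(x)\Vert\bm\varphi(x)\Vert_2^2=m$ to collapse the trace, and the choice $v\equiv 1$ in Lemma \ref{lem:minimumexpectation} to control $\mathbb{E}\bigl(\sum_{i=1}^n w(\widetilde x^i)\bigr)$. The one point to be careful about is the power of $(1-\delta)^{-1}$ in the noise summand: your chain produces \emph{two} such factors --- one from the norm equivalence converting $\Vert P_{V_m}u-u^{\widetilde{\bm x}^n_K}\Vert^2$ into $2(1-\delta)^{-1}\Vert\cdot\Vert^2_{\widetilde{\bm x}^n_K}$, and a second from $\bm G_{\widetilde{\bm x}^n_K}^{-1}\preceq(1-\delta)^{-1}\bm I$ inside $\mathrm{tr}(\bm G_{\widetilde{\bm x}^n_K}^{-1}\bm B)$ --- so what you actually establish is the bound with $(1-\delta)^{-2}$ in the noise term, not the $(1-\delta)^{-1}$ appearing in \eqref{quasi-optim-greedy-noisy}; your closing claim that this ``produces the noise summand'' of the theorem is therefore not quite accurate. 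You are in good company: the paper's own proof bounds $\sum_k|b_k|^2$ by $(1-\delta)^{-1}\sum_k|\beta_k|^2$ via the assertion $\Vert\bm G^{-1}_{\widetilde{\bm x}^n_K}\Vert_2^2\le(1-\delta)^{-1}$, whereas the spectral bound $\lambda_{\min}(\bm G_{\widetilde{\bm x}^n_K})\ge 1-\delta$ only gives $\Vert\bm G^{-1}_{\widetilde{\bm x}^n_K}\Vert_2^2\le(1-\delta)^{-2}$, so the stated constant appears to carry the same slip. Apart from making this exponent explicit (or correcting the target constant to $(1-\delta)^{-2}$), your argument is sound and matches the paper's.
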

	\begin{proof}
		Thanks the Pythagorean equality it holds,
		\begin{equation}
		\begin{aligned}
		\Vert u - u^{\widetilde{\bm{x}}^n_K} \Vert^2 & = \Vert u - P_{V_m}u \Vert^2 + \Vert P_{V_m}u - u^{\widetilde{\bm{x}}^n_K} \Vert^2\\
		& = \Vert u - P_{V_m}u \Vert^2 + \Vert Q_{V_m}^{\widetilde{\bm{x}}^n_K}(P_{V_m}u - u)+ Q_{V_m}^{\widetilde{\bm{x}}^n_K}u  - u^{\widetilde{\bm{x}}^n_K} \Vert^2\\
		\end{aligned}
		\end{equation}
		where $Q_{V_m}^{\widetilde{\bm x}^n_K}u$  is the boosted least-squares projection of the noiseless evaluations of $u$ over $V_m$.
		Then using the triangular inequality, 
		\begin{equation}
		\begin{aligned}
		\Vert u - u^{\widetilde{\bm{x}}^n_K} \Vert^2	& \le \Vert u - P_{V_m}u \Vert^2 + 2\Vert Q_{V_m}^{\widetilde{\bm{x}}^n_K}(P_{V_m}u - u)\Vert^2 + 2\Vert Q_{V_m}^{\widetilde{\bm{x}}^n_K}u  - u^{\widetilde{\bm{x}}^n_K} \Vert^2.
		\end{aligned}
		\end{equation}
		Taking the expectation and using Corollary \ref{cor:bound_optimal_proj_with_subs}, it comes
		\small		\begin{equation}
		\begin{aligned}
		\mathbb{E}(\Vert u - u^{\widetilde{\bm{x}}^n_K} \Vert^2) & \le (1 + 2\frac{n}{n_{min}}(1-\delta)^{-1}M(1-\eta^M)^{-1}) \Vert u - P_{V_m}u \Vert^2 + 2\mathbb{E}(\Vert Q_{V_m}^{\widetilde{\bm{x}}^n_K}u  - u^{\widetilde{\bm{x}}^n_K} \Vert^2).
		\end{aligned}
		\end{equation}
		Then, we note that
		\begin{equation}
		\begin{aligned}
		\Vert Q_{V_m}^{\widetilde{\bm{x}}^n_K}u - u^{\widetilde{\bm{x}}^n_K} \Vert^2 & \le \sum_{k=1}^m \vert b_k \vert^2\\
		\end{aligned}
		\end{equation}
		where $\bm{b} = (b_k)_{k=1}^m$ is solution to 
		$$\bm{G}_{\widetilde{\bm{x}}^n_K}\bm{b} = \bm{\beta}, \ \bm{\beta} := \left( \frac{1}{\#K} \sum_{i \in K} e^i w(\widetilde{x}^i) \varphi_k(\widetilde{x}^i)\right)_{1\le k \le m}. $$
		Since $\Vert \bm{G}^{-1}_{\widetilde{\bm{x}}^n_K}\Vert^2_2 \le (1-\delta)^{-1} $ it holds 
		$$\sum_{k=1}^m \vert b_k \vert^2 \le (1-\delta)^{-1} \sum_{k=1}^m \vert \beta_k \vert^2$$
		and
		\begin{equation}
		\begin{aligned}
		\sum_{k=1}^m \vert \beta_k \vert^2 = \sum_{k=1}^m \frac{1}{(\#K)^2} \sum_{i \in K}\sum_{j \in K} e^i w(\widetilde{x}^i) \varphi_k(\widetilde{x}^i) e^j w(\widetilde{x}^j) \varphi_k(\widetilde{x}^j). 
		\end{aligned}
		\end{equation}
		As $K$ is a random variable, 
		\begin{equation}
		\begin{aligned}
		\mathbb{E}\left( \frac{1}{\#K^2} \sum_{i\in K}\sum_{j\in K} e^i w(\widetilde{x}^i) \varphi_k(\widetilde{x}^i) e^j w(\widetilde{x}^j) \varphi_k(\widetilde{x}^j)\right)	= \\  \mathbb{E}( \mathbb{E}( \frac{1}{\#K^2} \sum_{i\in K}\sum_{j\in K} e^i w(\widetilde{x}^i) \varphi_k(\widetilde{x}^i) e^j w(\widetilde{x}^j) \varphi_k(\widetilde{x}^j) \vert K )) =\\
		\mathbb{E}( \frac{1}{\#K^2} \sum_{i\in K}\sum_{j\in K} \mathbb{E}( e^i w(\widetilde{x}^i) \varphi_k(\widetilde{x}^i) e^j w(\widetilde{x}^j) \varphi_k(\widetilde{x}^j) \vert K )).\\
		\end{aligned}
		\end{equation}
		By construction $K$ is independent from the noise, 
		$$ \mathbb{E}( e^i w(\widetilde{x}^i) \varphi_k(\widetilde{x}^i) e^j w(\widetilde{x}^j) \varphi_k(\widetilde{x}^j) \vert K ) =  \mathbb{E}( e^ie^j) \mathbb{E}( w(\widetilde{x}^i) \varphi_k(\widetilde{x}^i)w(\widetilde{x}^j) \varphi_k(\widetilde{x}^j) \vert K ).$$
		Therefore, for $i \neq j$ $$ \mathbb{E}( e^i w(\widetilde{x}^i) \varphi_k(\widetilde{x}^i) e^j w(\widetilde{x}^j) \varphi_k(\widetilde{x}^j) \vert K ) = 0$$
		and for $i=j$ $$ \mathbb{E}( e^i w(\widetilde{x}^i) \varphi_k(\widetilde{x}^i) e^j w(\widetilde{x}^j) \varphi_k(\widetilde{x}^j) \vert K ) = \sigma^2 \mathbb{E}( w(\widetilde{x}^i) \varphi_k(\widetilde{x}^i)^2 \vert K ).$$
		Then 
		\begin{equation}
		\begin{aligned}
		\sum_{k=1}^m	\mathbb{E}(\vert \beta_k \vert^2) =	\sigma^2  \mathbb{E}\left( \frac{1}{\#K^2} \sum_{i \in K}  w(\widetilde{x}^i)^2 \sum_{k=1}^m \varphi_k(\widetilde{x}^i)^2\right)  & = m \sigma^2 \mathbb{E}\left( \frac{1}{\#K^2} \sum_{i \in K}  w(\widetilde{x}^i)^2 \right)\\
		& \le \frac{m}{n_{min}^2}\sigma^2 \mathbb{E}\left(\sum_{i \in K} w(\widetilde{x}^i)\right)\\
		& \le \frac{m}{n_{min}^2}\sigma^2 \mathbb{E}\left(\sum_{i =1}^n w(\widetilde{x}^i)\right).\\
		\end{aligned}
		\end{equation}
		To bound the term, $\mathbb{E}\left( \sum_{i=1}^n w(\widetilde{x}^i) \right)$, we use \eqref{norm-tildexn-bound} with $v=1$,
		\begin{equation}
		\begin{aligned}
		\mathbb{E}\left( \sum_{i=1}^n w(\widetilde{x}^i) \right) 	&  \le n\mathbb{E}\left( \Vert 1 \Vert_{\widetilde{x}^n}^2  \right) \le nM (1-\eta^M)^{-1} \Vert 1 \Vert^2 = nM (1-\eta^M)^{-1}. \\
		\end{aligned}
		\end{equation}
		All in all,
		\begin{equation}
		\begin{aligned}
		\sum_{k=1}^m	\mathbb{E}(\vert \beta_k \vert^2) \le \sigma^2\frac{mn}{n_{min}^2} (1-\eta^M)^{-1}M.\\
		\end{aligned}
		\end{equation}
	\end{proof}
	When there is no subsampling the bound from Equation \ref{quasi-optim-greedy-noisy} becomes
	\begin{equation}\label{quasi-optim-greedy-noisy-no-subs}\small
	\mathbb{E}(\Vert u - u^{\widetilde{\bm{x}}^n_K} \Vert^2) \le (1+  (1-\delta)^{-1}(1 - \eta^M)^{-1} M)\Vert u - P_{V_m}u \Vert^2 + \frac{2\sigma^2m}{n}(1-\delta)^{-1}(1-\eta^M)^{-1}M.
	\end{equation}
	When $n = n_{min}$ (allowing subsampling to reach interpolation regime $\#K =m$), the bound becomes
	\begin{equation}\label{quasi-optim-greedy-noisy-full-subs}\small
	\mathbb{E}(\Vert u - u^{\widetilde{\bm{x}}^n_K} \Vert^2) \le (1+  \frac{2n}{m}(1-\delta)^{-1}(1 - \eta^M)^{-1} M)\Vert u - P_{V_m}u \Vert^2 + 2\sigma^2\frac{n}{m}(1-\delta)^{-1}(1-\eta^M)^{-1}M ,
	\end{equation}
	in this particular case the influence of the noise may be more important, as for an interpolation method. Then in the noisy case, using $n_{min} = \frac{n}{\beta}$ for some fixed $\beta > 1$ allows to better control the noise term.
	\section{Numerical experiments}\label{sec:examples}
	\subsection{Notations and objectives}
	\label{subs:notations}
	In this section, we focus on polynomial approximation spaces $V_m = \mathbb{P}_p$ with $p$ the polynomial degree. We use an orthonormal polynomial basis of $V_m$ (Hermite polynomials for a Gaussian measure or Legendre polynomials for a uniform measure). The aim is to compare the performance of the method we propose with the optimal weighted least-squares method and interpolation. We are not trying to be exhaustive in this comparison, but to cover a quite large panel of state-of-the art approximation methods.\\ First, we consider interpolation performed on deterministic set of points (Gauss-Hermite points for a Gaussian measure, abbreviated \textbf{$\bm{\mathcal{I}}$-GaussH} and Gauss-Legendre points for a uniform measure, abbreviated \textbf{$\bm{\mathcal{I}}$-GaussL}), magic points, abbreviated \textbf{$\bm{\mathcal{I}}$-Magic}, see \cite{Maday2007}, Leja points, abbreviated \textbf{$\bm{\mathcal{I}}$-Leja}, see \cite{Baglama1998},\cite{Bos2010} and \cite{Narayan2014} for their weighted version dealing with unbounded domains and Fekete points, abbreviated \textbf{$\bm{\mathcal{I}}$-Fekete}, see \cite{Sommariva2009} and \cite{Guo2018} for their weighted version dealing with unbounded domains. The three last sets of points are chosen among a sufficiently large and dense discretization of $\mathcal{X}$.\\ Then, we consider least-squares methods, more precisely standard least-squares methods, abbreviated \textbf{SLS}, optimal weighted least-squares projection (introduced in \cite{Cohen2016}), abbreviated \textbf{OWLS}, and also the boosted optimal weighted least-squares projections we propose, abbreviated \textbf{BLS}, \textbf{c-BLS} and \textbf{s-BLS} when we respectively use resampling, conditioning, and subsampling plus resampling and conditioning.
	\begin{algorithm}[H]
		\caption{Presentation of the \textbf{BLS} method, \textbf{c-BLS} method and \textbf{s-BLS} method}\label{algo:bls_algo}
		\begin{algorithmic}
			\STATE \hspace{-0.4cm}\textbf{Inputs:} $\delta$, $\eta$, $M$, $V_m$. 
			\STATE \hspace{-0.4cm}\textbf{Outputs:} $\bm{x}^{n,\star}$ for the \textbf{BLS} method, $\widetilde{\bm{x}}^{n}$ for the \textbf{c-BLS} method, $\widetilde{\bm{x}}^{n}_K$, for the \textbf{s-BLS} method.
			\STATE \
			\FOR{$i=1, \hdots, M$}
			\STATE Sample $\bm{x}^{n,i} \sim \bm{\rho}^n$
			\ENDFOR
			\STATE $\bm{x}^{n,\star} = \min_{1 \le i \le M}\Vert \bm{G}_{\bm{x}^{n,i}} - \bm{I} \Vert_2$.
			\STATE $\bm{x}^{n,\star}$ is the sample produced with the \textbf{BLS} method.
			\STATE \ 
			\STATE Initialize : $\widetilde{\bm{x}}^{n}$ is a sample produced with the \textbf{BLS} method.
			\WHILE{$\Vert \bm{G}_{\bm{x}^{n,\star}} - \bm{I} \Vert_2 > \delta$}
			\STATE Sample $\widetilde{\bm{x}}^{n}$ with the \textbf{BLS} method.
			\ENDWHILE
			\STATE $\widetilde{\bm{x}}^{n}$ is the sample produced with the \textbf{c-BLS} method.
			\STATE \ 
			\STATE Initialize $K = \{1, \hdots, n\}$, $\widetilde{\bm{x}}^{n}_K$ is a sample produced with the \textbf{c-BLS} method.
			\WHILE{$\Vert \bm{G}_{\widetilde{\bm{x}}^{n}_K} - \bm{I} \Vert_2 < \delta$}
			\STATE Select $k^{\star}$ such that $$
			\Vert \bm{G}_{{\widetilde{\bm{x}}^n}_{K \setminus \{k^{\star}\}}} - \bm{I} \Vert_2 = \min_{k \in K} \Vert \bm{G}_{{\widetilde{\bm{x}}^n}_{K \setminus \{k\}}} - \bm{I} \Vert_2.
			$$
			\STATE $K = K \setminus k^{\star}$
			\ENDWHILE
			\STATE $\widetilde{\bm{x}}^{n}_K$ is the sample produced with the \textbf{s-BLS} method.
		\end{algorithmic}
	\end{algorithm}
	\begin{remark}
		For a fixed approximation space $V_m$, it must be noticed that the methods \textbf{OWLS}, \textbf{BLS} and \textbf{$\bm{\mathcal{I}}$-GaussH}, \textbf{$\bm{\mathcal{I}}$-GaussL} or \textbf{$\bm{\mathcal{I}}$-Leja} points, do not depend on the choice of the orthonormal basis associated with $V_m$, as the quantity $Z_{\bm{x}^n}$ is independent of this choice. This is however not the case for the two following methods \textbf{$\bm{\mathcal{I}}$-Magic} \cite{Maday2007} or \textbf{$\bm{\mathcal{I}}$-Fekete} \cite{Sommariva2009}, the particular choice of the basis will be mentioned in each example.
	\end{remark}
	\begin{remark}	
		The difficulty of the optimal least-squares methods lies in the fact that we sample from a non-usual probability density function. We need a sampling technique that is both accurate and as fast as possible.\\
		Sampling from univariate densities can be done with classical techniques as rejection sampling \cite{Devroye1985}, inverse transform sampling \cite{Devroye1985} and slice sampling \cite{Neal2003}. This latter is used for all the numerical examples.
		To sample from multivariate densities we use a sequential sampling technique, as described in \cite{Cohen2016}, which only requires samplings from univariate densities.
	\end{remark}
	In the next section, two kinds of comparisons are performed. 
	First, we compare qualitatively the distributions of the random variable $Z_{\bm{x}^n}$ and the distributions of the $n$-points sample $\bm{x}^n$. These analyses depend only on the choice of the approximation space $V_m$, and does not involve a function to approximate.
	Secondly, we compare quantitatively the efficiency of the different methods to approximate functions. We consider analytical functions on $\mathbb{R}^d$ or $[-1,1]^d$ equipped with Gaussian or uniform measures.\\
	
	\subsection{Qualitative analysis of the boosted optimal weighted least-squares method}
	\subsubsection{Analysis of the stability}
	The objective of this paragraph is to compare the stability of the boosted optimal weighted least-squares method, using subsampling from Section \ref{sec:subsampling}  or not, respectively \textbf{s-BLS} and \textbf{c-BLS}, with two other state-of-the art methods, standard least-squares method, abbreviated \textbf{SLS} and \textbf{OWLS} method. As explained in Section \ref{sec:wls}, the stability of the least-squares projection can be characterized by the random variable $Z_{\bm{x}^n} = \Vert  \bm{G}_{\bm{x}^n} - \bm{I} \Vert_2$. The closer $Z_{\bm{x}^n}$ is to 1, the more stable the approximation is. In this paragraph, we compare the distribution of this random variable $Z_{\bm{x}^n}$ for the different sampling methods. For the \textbf{SLS} method, the sampling measure is the reference measure $\mu$. In the \textbf{OWLS} method, the sampling measure $\rho$ is the measure with density $w^{-1}$ with respect to the reference measure $\mu$,  chosen as in \eqref{optimal-w}. \\
	We present results for approximation spaces $V_m = \mathbb{P}_5$ with $\mu$ a Gaussian or uniform measure. The Figures \ref{fig:distribGL1} and \ref{fig:distribGH1} show that using \textbf{OWLS} instead of \textbf{SLS} shifts to the left the distribution of the random variable $Z_{\bm{x}^n}$. Without surprise, we see that conditioning $Z_{\bm{x}^n}$ by the event $A_{\delta} = \{ Z_{\bm{x}^n} \le \delta \}$ yields a distribution whose support is included in $[0,\delta]$. As expected, we also notice that very similar results are obtained for the \textbf{OWLS} and \textbf{c-BLS} methods when choosing $M=1$. In the same manner, increasing the number of resampling $M$ also shifts the PDF of $Z_{\bm{x}^n}$ to the low values, and decreases its variability. When interested in maximizing the probability of $A_{\delta}$, \textbf{c-BLS} method is therefore an interesting alternative to \textbf{SLS} and \textbf{OWLS}. At last,  looking at Figures \ref{fig:distribGL2} and \ref{fig:distribGH2}, we observe that the greedy selection moves the PDF of $Z_{\bm{x}^n}$ to the high values. This was expected: to switch from \textbf{c-BLS} to \textbf{s-BLS}, the size of the sample is reduced, as points are adaptively removed. However, as it is conditioned by $A_{\delta}$, it remains better than \textbf{SLS} and \textbf{OWLS} methods.\\
	
	\begin{center}
		\begin{figure}
			\begin{subfigure}[b]{0.49\textwidth}
				\centering
				\includegraphics[scale =1]{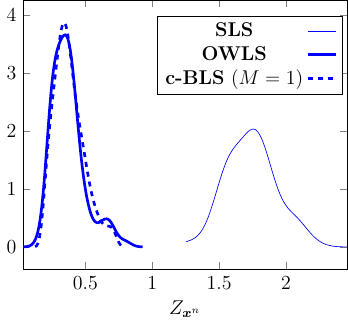}
				\includegraphics[scale =1]{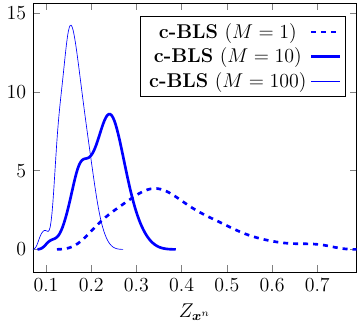}
				\caption{Gaussian measure}
				\label{fig:distribGH1}	
			\end{subfigure}
			\begin{subfigure}[b]{0.49\textwidth}
				\centering
				\includegraphics[scale =1]{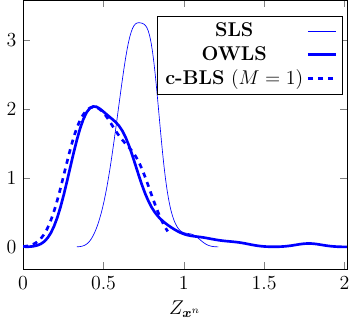}
				\includegraphics[scale =1]{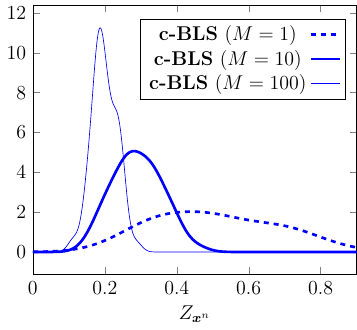}
				\caption{Uniform measure}
				\label{fig:distribGL1}	
			\end{subfigure}
			\caption{Probability density function of $ Z_{\bm{x}^n} = \Vert \bm{G}_{\bm{x}^n} -\bm{I} \Vert_2 $ for $V_m =\mathbb{P}_5$, with $\delta = 0.9$ and $n = 100$.}
		\end{figure}
	\end{center}
	\begin{figure}
		\begin{subfigure}[b]{0.49\textwidth}
			\centering
			\includegraphics[scale =1]{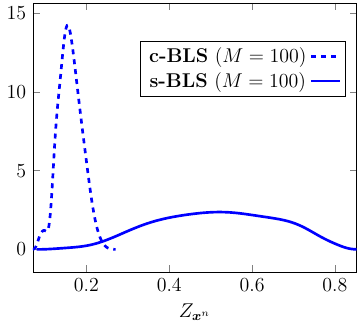}
			\caption{Gaussian measure}
			\label{fig:distribGH2}	
		\end{subfigure}
		\begin{subfigure}[b]{0.49\textwidth}
			\centering
			\includegraphics[scale =1]{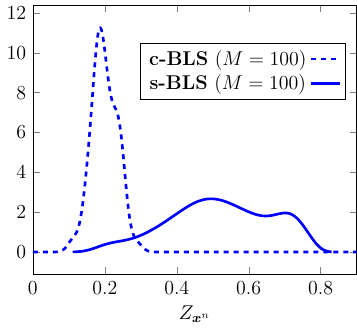}
			\caption{Uniform measure}
			\label{fig:distribGL2}	
		\end{subfigure}
		\caption{Probability density function of $ Z_{\bm{x}^n} = \Vert \bm{G}_{\bm{x}^n} -\bm{I} \Vert_2 $ for $V_m =\mathbb{P}_5$, with $\delta = 0.9$ and $n=100$.}
	\end{figure}
	\subsubsection{Distribution of the sample points} 
	In this paragraph, we are interested in the distributions of the points sampled with the \textbf{c-BLS} and \textbf{s-BLS} methods. We consider $d=1$.\\ 
	First, $n=10$ points are sampled according the \textbf{c-BLS} method for different values of $M$ (from $1$ to $50000$). These points are then sorted in ascending order. After repeating this procedure $r=1000$ times, the probability distributions of the sorted points are represented in Figures \ref{fig:repartition_pointsHnG} and \ref{fig:repartition_pointsLnG} (one color per point).\\
	\begin{figure}
		\begin{subfigure}[b]{0.49\textwidth}
			\centering
			\includegraphics[scale =1]{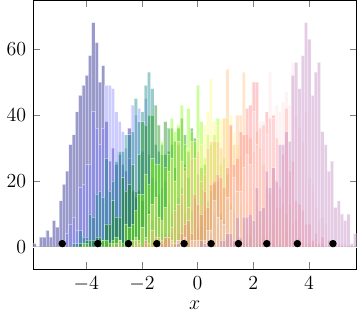}
			\caption{$M=1$}
		\end{subfigure}
		\begin{subfigure}[b]{0.49\textwidth}
			\centering
			\includegraphics[scale =1]{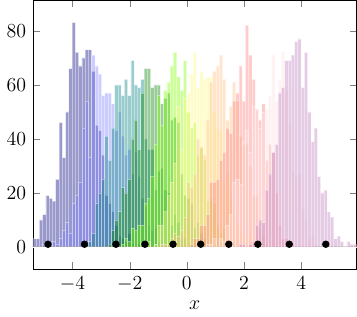}
			\caption{$M=10$}
		\end{subfigure}
		\begin{subfigure}[b]{0.49\textwidth}
			\centering
			\includegraphics[scale =1]{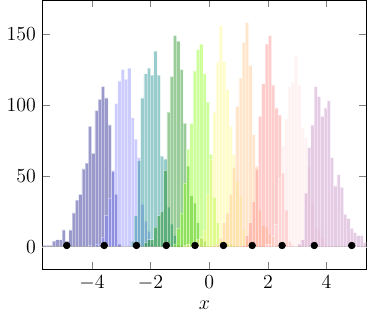}
			\caption{$M=10000$}
		\end{subfigure}
		\begin{subfigure}[b]{0.49\textwidth}
			\centering
			\includegraphics[scale =1]{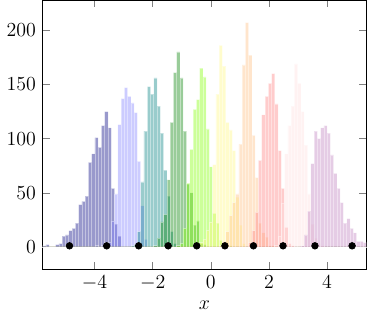}
			\caption{$M=50000$}
			\label{fig:repartition_pointsHnGM50000}
		\end{subfigure}
		\caption{Distributions of the $x^{(i)}, i=1, \hdots, 10$, with $\bm{x}^{10}$ sampled from the \textbf{c-BLS} method for $V_m = \mathbb{P}_5$ and $\mu$ the Gaussian measure.}
		\label{fig:repartition_pointsHnG}
	\end{figure}
	For $\mu$  the Gaussian or the uniform measure, when $M$ is small, ($M=1$ or $M=10$), we notice a strong overlap between the support of the different distributions. This is no longer the case for the highest values of $M$ ($M=10000$ or $M=50000$). Hence, the larger $M$, the further apart the points are from each other with high probability, and the more they concentrate around specific values.
	\begin{figure}
		\begin{subfigure}[b]{0.49\textwidth}
			\centering
			\includegraphics[scale =1]{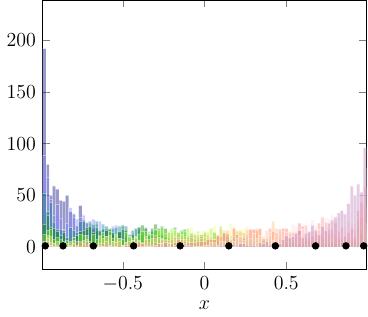}
			\caption{$M=1$}
		\end{subfigure}
		\begin{subfigure}[b]{0.49\textwidth}
			\centering
			\includegraphics[scale =1]{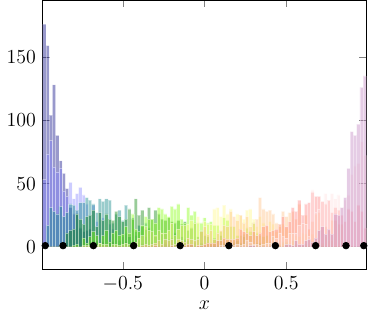}
			\caption{$M=10$}
		\end{subfigure}
		\begin{subfigure}[b]{0.49\textwidth}
			\centering
			\includegraphics[scale =1]{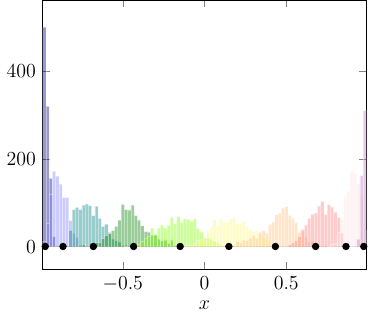}
			\caption{$M=10000$}
		\end{subfigure}
		\begin{subfigure}[b]{0.49\textwidth}
			\centering
			\includegraphics[scale =1]{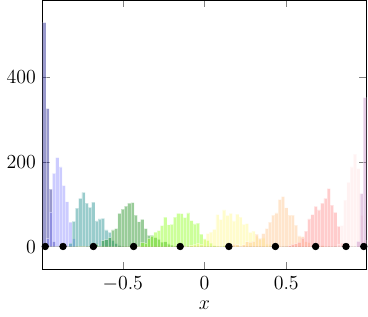}
			\caption{$M=50000$}
			\label{fig:repartition_pointsLnGM50000}
		\end{subfigure}
		\caption{Distributions of the $x^{(i)}, i=1, \hdots, 10$, with $\bm{x}^{10}$ sampled from the \textbf{c-BLS} method for $V_m = \mathbb{P}_5$ and $\mu$ the uniform measure.}
		\label{fig:repartition_pointsLnG}
	\end{figure}
	Secondly, $n = 6$ points are sampled according to the \textbf{s-BLS} method. To this end, a greedy procedure is applied to remove points from an initial sample of $10$ points until we get the required number of points. In that case, as we fix the size of the sample, there is a priori no guarantee that the value of $Z_{\bm{x}^n}$ remains smaller than $\delta$.
	The obtained $6$-points sample is once again sorted in ascending order, and we repeat the procedure $r=1000$ times. As previously, the distributions of the sorted points are represented in Figures \ref{fig:repartition_pointsHG} and \ref{fig:repartition_pointsLG} for different values of $M$. Only moderate values of $M$ are considered, as we empirically observed that choosing $M$ higher than $100$ had very little influence on the results when considering subsampling. \\
	\begin{figure}[tbhp]
		\begin{subfigure}[b]{0.49\textwidth}
			\centering
			\includegraphics[scale =1]{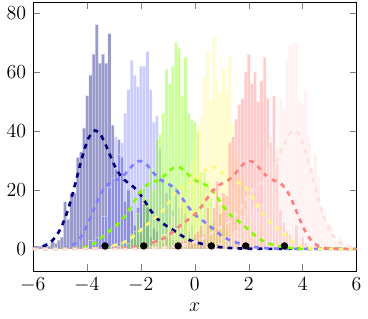}
			\caption{$M=1$}
			\label{fig:repartition_pointsHGM1}
		\end{subfigure}
		\begin{subfigure}[b]{0.49\textwidth}
			\centering
			\includegraphics[scale =1]{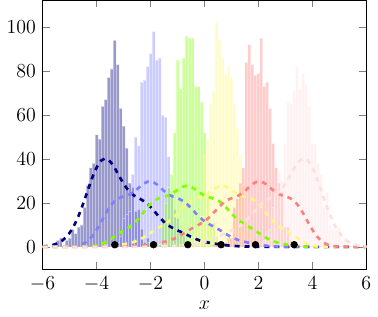}
			\caption{$M=10$}
		\end{subfigure}
		\begin{subfigure}[b]{0.49\textwidth}
			\centering
			\includegraphics[scale =1]{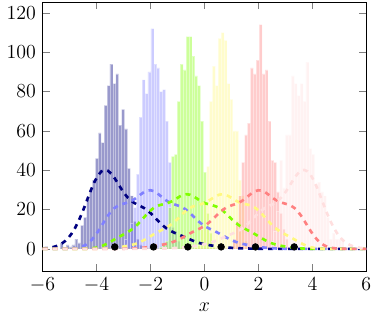}
			\caption{$M=50$}
		\end{subfigure}
		\begin{subfigure}[b]{0.49\textwidth}
			\centering
			\includegraphics[scale =1]{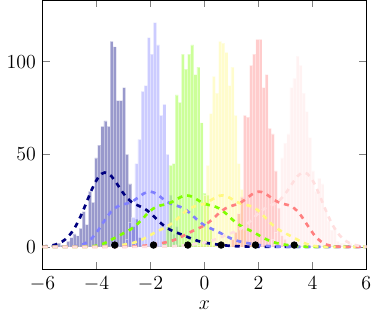}
			\caption{$M=100$}
			\label{fig:repartition_pointsHGM100}
		\end{subfigure}
		\caption{Distributions of the $x^{(i)}, i=1, \hdots, 6$, with $\bm{x}^{6}$ sampled from the \textbf{s-BLS} method (colored histograms) and  \textbf{OWLS} method (dashed lines) for $V_m = \mathbb{P}_5$ with $\mu$ the gaussian measure.}
		\label{fig:repartition_pointsHG}
	\end{figure} 
	Comparing the figures associated with the methods with or without greedy subsampling, we finally observe that \textbf{s-BLS} method provides results that are very close to \textbf{c-BLS} method with a very high value of $M$. This emphasizes the efficiency of the greedy selection to separate the support of the distributions of points.\\
	In Figure \ref{fig:repartition_pointsHGM1}, the distributions of the sorted points associated to the \textbf{OWLS} method are represented in dashed lines. This shows that even if no resampling is carried out ($M=1$), using the \textbf{s-BLS} method instead of \textbf{OWLS} improves the space filling properties of the obtained samples.
	\begin{remark}
		In Figures \ref{fig:repartition_pointsHnG}, \ref{fig:repartition_pointsLnG}, \ref{fig:repartition_pointsHG} and \ref{fig:repartition_pointsLG} black dots have been added to indicate the positions of the first $n$ Gauss-Hermite points in the Gaussian case, and the $n$ first Gauss-Legendre points in the uniform case. Interestingly, we observe that, in the Gaussian case, the distribution of points spreads symmetrically around zero and in the uniform case, the distribution concentrates on the edges.
	\end{remark}
	\begin{figure}
		\begin{subfigure}[b]{0.49\textwidth}
			\centering
			\includegraphics[scale =1]{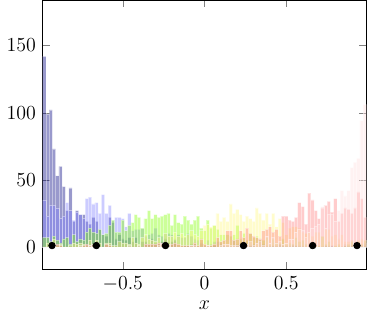}
			\caption{$M=1$}
		\end{subfigure}
		\begin{subfigure}[b]{0.49\textwidth}
			\centering
			\includegraphics[scale =1]{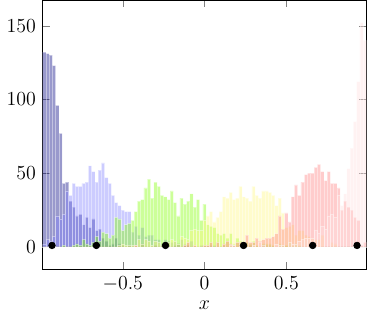}
			\caption{$M=10$}
		\end{subfigure}
		\begin{subfigure}[b]{0.49\textwidth}
			\centering
			\includegraphics[scale =1]{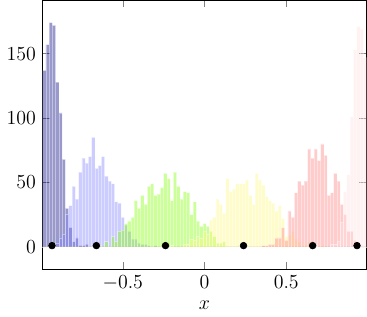}
			\caption{$M=50$}
		\end{subfigure}
		\begin{subfigure}[b]{0.49\textwidth}
			\centering
			\includegraphics[scale =1]{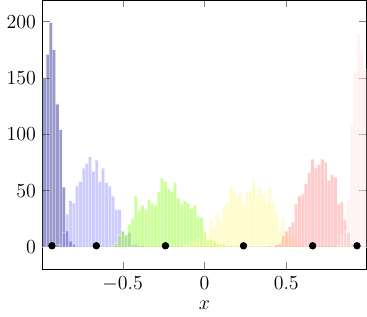}
			\caption{$M=100$}
			\label{fig:repartition_pointsLGM100}
		\end{subfigure}
		\caption{Distributions of the $x^{(i)}, i=1, \hdots, 6$, with $\bm{x}^{6}$ sampled from the \textbf{s-BLS} method for $V_m = \mathbb{P}_5$ and $\mu$ the uniform measure. }
		\label{fig:repartition_pointsLG}
	\end{figure}  
	\subsection{Quantitative analysis for polynomial approximations}
	\label{subs:examples_functions}
	In this paragraph, we want to compare the different methods introduced in Subsection \ref{subs:notations} in terms of approximation efficiency. The quality of the approximation $u^{\star}$ of a function $u \in L^2_{\mu}(\mathcal{X})$ is assessed by estimating the error of approximation 
	$$ \varepsilon^2 = \frac{1}{N_{test}}\sum_{i \in \bm{x}_{test}} (u(x^{(i)})  - u^{\star}(x^{(i)}))^2.$$ In practice, we choose $N_{test} = 1000$.
	To study the robustness of the methods, we compute $10$ times the approximations and draw $10$ different test samples $\bm{x}_{test}$ and compute empirical confidence intervals of level $10\%$ and $90\%$ for the errors of approximation.\\
	
	For each example, two kind of comparisons are performed.
	\begin{itemize}
		\item Tables (a) present the results for the methods \textbf{OWLS}, \textbf{c-BLS}, and  \textbf{s-BLS}, the number of samples is chosen to ensure the stability of the empirical Gram matrix with high probability or almost surely. For the \textbf{OWLS} method, the number of samples is equal to $n = \lceil d_{\delta}^{-1}m\log(2m\eta^{-1}) \rceil $, with $\delta = 0.9$ and $\eta = 0.01$ such that the stability of the empirical Gram matrix is ensured with probability greater than 0.99. For the \textbf{c-BLS} method, the initial number of samples $n$ is also equal to $n = \lceil d_{\delta}^{-1}m\log(2m\eta^{-1}) \rceil $ but choosing $\delta = 0.9$ and $\eta = 0.01^{1/M}$ ($M$ is specified in the example). Since the resulting sample $\widetilde{\bm{x}}^n$ satisfies $A_{\delta}$, the stability of the empirical Gram matrix is guaranteed. For the \textbf{s-BLS} method, the initial sample is the same than for the \textbf{c-BLS} method and a  greedy removal of points is performed as long as the event $A_{\delta}$ is satisfied. 
		\item Tables (b) compare all the $\bm{\mathcal{I}-}$ methods with \textbf{OWLS}, \textbf{BLS} and \textbf{s-BLS}. For all methods, except \textbf{s-BLS}, the number of samples $n$ is taken equal to the dimension of the approximation space $m$. In this particular comparison, the \textbf{BLS} method only consists in a resampling strategy. For the \textbf{s-BLS} method, the initial number of samples is equal to $n = \lceil d_{\delta}^{-1}m\log(2m\eta^{-1}) \rceil $, with $\delta = 0.9$ and $\eta = 0.01^{1/M}$ ($M$ is specified in the example) and since the resulting sample $\widetilde{\bm{x}}^n$ satisfies $A_{\delta}$, the stability of the empirical Gram matrix is guaranteed. Then the greedy selection of points is performed as long as $n \ge m$, implying that the resulting sample used to build the approximation may not lead to a stable empirical Gram matrix.
		\begin{remark}
			In the interpolation regime $n=m$, the stability condition from \ref{bound-PZ} can not be reached. Indeed, choosing $M$ arbitrary big enables us to choose $\eta$ close to $1$, but still $\eta < 1$. It implies that the number of samples $n(\delta, \eta, m)$ necessary to get the stability condition from Theorem \ref{th:condition-sample-size} has to be greater than $d_{\delta}^{-1}m\log(2m) >m$. In the case of controlled cost, this explains why we choose to use the \textbf{BLS} method without conditioning. 
		\end{remark}
		\label{s-BLSnbofsamples}
	\end{itemize}
	\subsubsection{A first function}
	\label{subsubs:first_function}
	We consider $\mathcal{X} = \mathbb{R}$, equipped with the standard Gaussian measure $\mu$ and the function
	\begin{equation}
	u_1(x) = \exp\left(-\frac{1}{4}(x-1)^2\right).
	\end{equation}
	The approximation space is $V_m = \mathbb{P}_{m-1} = \text{span}\{\varphi_i: 1 \le i \le m\}$, where the basis $\{\varphi_i\}_{i=1}^m$ is chosen as the Hermite polynomials of degree less than $m-1$. This is referred to as example 1.
	\begin{table}
		\small
		\centering
		\begin{subtable}{.99\textwidth}
			\centering
			\begin{tabular}{cc|c||c|c||c|c||c|c}
				
				& \multicolumn{2}{c||}{\textbf{OWLS}} &\multicolumn{2}{c||}{\makecell{{\textbf{c-BLS}} \\ $(M=1)$}}& \multicolumn{2}{c||}{\makecell{{\textbf{c-BLS}} \\ $(M=100)$}} & \multicolumn{2}{c}{\makecell{{\textbf{s-BLS}} \\$(M=100)$}}\\                                                                                                                     
				\rowcolor{gray!10!white}  $p$ & $\varepsilon$ & $n$ & $\varepsilon$ & $n$ & $\varepsilon$ & $n$ & $\varepsilon$ & $n$ \\                                           
				5 & [-2.1; -1.8] & 134 & [-2.2; -1.9] & 134 & [-2.2; -1.8] & 48 & [-2.1; -1.8] & [6; 6] \\       
				
				\rowcolor{gray!10!white}  10 & [-3.1; -3.1] & 265 & [-3.2; -3.0] & 265 & [-3.2; -3.1] & 108 & [-3.0; -2.6] & [11; 13] \\ 
				15 & [-4.5; -4.2] & 404 & [-4.5; -4.3] & 404 & [-4.5; -4.3] & 176 & [-4.6; -4] & [16; 17] \\                                                                                                                                           
				\rowcolor{gray!10!white}  20 & [-5.8; -5.7] & 548 & [-5.9; -5.7] & 548 & [-5.9; -5.7] & 249 & [-5.9; -5.3] & [21; 23] \\
				25 & [-6.9; -6.7] & 697 & [-7; -6.8] & 697 & [-7; -6.7] & 326 & [-6.9; -6.6] & [26; 28] \\      
				
				\rowcolor{gray!10!white}  30 & [-8.3; -8.1] & 848 & [-8.4; -8.2] & 848 & [-8.4; -8.2] & 405 & [-8.4; -8.0] & [31; 36] \\           
				35 & [-9.4; -9.3] & 1001 & [-9.4; -9.3] & 1001 & [-9.4; -9.2] & 488 & [-9.4; -8.9] & [37; 39] \\                                                                                                                                                    
				\rowcolor{gray!10!white}  40 & [-10.7; -10.5] & 1157 & [-10.7; -10.5] & 1157 & [-10.7; -10.6] & 572 & [-10.8; -10.3] & [42; 44] \\                                                                                                                                                         
			\end{tabular}                                                                                                                                                             
			\caption{Guaranteed stability with probability greater than $0.99$ for \textbf{OWLS} and almost surely for the other methods.}
			\label{table:ex1Hprecision}
		\end{subtable}
		\begin{subtable}{.99\textwidth}
			\centering
			\begin{tabular}{cc|c|c|c||c|c|c}
				\rowcolor{gray!10!white}  & \multicolumn{4}{c||}{\textbf{Interpolation}} & \multicolumn{3}{c}{\textbf{Least-Squares regression}}\\
				& \multicolumn{1}{c|}{$\bm{\mathcal{I}}$\textbf{-GaussH}} & \multicolumn{1}{c|}{$\bm{\mathcal{I}}$\textbf{-Magic}} & \multicolumn{1}{c|}{$\bm{\mathcal{I}}$\textbf{-Leja}} & \multicolumn{1}{c||}{$\bm{\mathcal{I}}$\textbf{-Fekete}} & \multicolumn{1}{c|}{\textbf{OWLS}} & \multicolumn{1}{c|}{\makecell{{\textbf{BLS}} \\ $(M=100)$}} & \multicolumn{1}{c}{\textbf{s-BLS}}\\
				\rowcolor{gray!10!white} $p$ & $\varepsilon$ & $\varepsilon$ & $\varepsilon$ & $\varepsilon$ & $\varepsilon$ & $\varepsilon$ & $\varepsilon$\\                                                                                                                                                                                                                                                                                                                                                                                                 
				5 &  [-2.2,-1.5] & [-0.5; -0.5] & [-1.3; -1.2] & [-1.3; -1.2] & [-1.8; -0.3] & [-2.1; -1.6] & [-2.1; -1.8]  \\                                                                                                                                      
				
				\rowcolor{gray!10!white}  10 & [-3.1,-3.0]   & [-0.8; -0.8] & [-2.1; -2.0] & [-1.9; -1.9] & [-2.6; -1.0] & [-3.1; -1.9] & [-3.0; -2.5]  \\                                                                                                                                   
				15 &  [-4.5,-4.2] & [-1.4; -1.4] & [-2.6; -2.5] & [-2.4; -2.3] & [-4.4; -1.6] & [-4.3; -2.2] & [-4.6; -3.9]  \\                                                                                                                                   
				
				\rowcolor{gray!10!white}  20 & [-5.9,-5.8]   & [-2.2; -2.2] & [-4.2; -4.2] & [-4.1; -4.1] & [-5.9; -3.2] & [-6.1; -4.3] & [-5.9; -5.1]  \\                                                                                                                                  
				25 &  [-6.8,-6.7]  & [-3.4; -3.4] & [-4.4; -4.3] & [-5.1; -5.0] & [-6.8; -3.9] & [-6.4; -4.7] & [-6.9; -6.0]  \\                                                                                                                                 
				
				\rowcolor{gray!10!white}  30 & [-8.6,-8.4]  & [-4.5; -4.4] & [-5.7; -5.6] & [-6.1; -6.1] & [-9.1; -4.4] & [-8.8; -5.2] & [-8.5; -7.2]  \\                                                                                                                                   
				35 &  [-9.3,-9.2] & [-5.3; -5.3] & [-7.1; -7.0] & [-7.7; -7.6] & [-10.4; -4.9] & [-9.0; -6.4] & [-9.4; -8.5]  \\                                                                                                                                   
				
				\rowcolor{gray!10!white}  40 & [-9.8,-9.7] & [-7.0; -7.0] & [-8.6; -8.5] & [-8.5; -8.5] & [-10.9; -6.7] & [-10.8; -7.1] & [-11.0; -9.7]  \\                                                                                                                      
				
			\end{tabular}                                                                                                                                                                                                                                                                                                                                                       
			\caption{Given cost: $n= m$}
			\label{table:ex1Hcost}
		\end{subtable}
		\caption{Approximation error $\varepsilon$ in log-10 scale for the example 1. Abbreviations are defined in Subsection \ref{subs:notations}.}
		\label{table:ex1H}
	\end{table}
	
	For this example, looking at Table \ref{table:ex1Hprecision}, we first observe that the approximation error decreases in a similar way for the four methods \textbf{OWLS}, \textbf{c-BLS} ($M=1$), \textbf{c-BLS} ($M=100$) and \textbf{s-BLS} ($M=100$) when the size of the approximation space increases. However, the results for the \textbf{c-BLS} (M=100) method are using less evaluations of the function. Indeed, by resampling, that is to say by increasing the value of $M$, the bound of the probability of getting a stable approximation is $1-\eta^M$ instead of $1-\eta$. Hence if $\eta$ is chosen equal to 0.01 for $M=1$, taking $\eta$ equal to $0.01^{1/M}$ for higher values of $M$ does not modify the bound of the probability of getting a stable approximation, but allows us to strongly reduce the number of samples needed to guarantee the same stability condition (see \ref{numberofsamples} for the explicit relation between the minimum number of samples and $\eta$). Regarding the number of evaluations of the function, the \textbf{s-BLS} ($M=100$) method, which uses greedy subsampling is even better.\\
	
	Looking at Table \ref{table:ex1Hcost}, we also observe that for all the methods, the error of approximation decreases when the size of the approximation space increases. Nevertheless, it is interesting to notice that among the interpolation methods, the $\bm{\mathcal{I}}$\textbf{-Magic} method is less accurate than the others when the dimension of the approximation space is too high. In practice, for the \textbf{s-BLS} method, letting the greedy algorithm reach the interpolation regime $(m=n)$ may provide a sample $\widetilde{\bm{x}}^n$ which does not guarantee the stability. However, focusing on the upper bound of the errors, we also see that in the interpolation regime, only the \textbf{s-BLS} method seems to be able to provide results that are compatible to the ones of the $\bm{\mathcal{I}}$\textbf{-GaussH} method.
	\begin{remark}
		The points for the $\bm{\mathcal{I}}$\textbf{-Magic}, $\bm{\mathcal{I}}$\textbf{-Fekete} and $\bm{\mathcal{I}}$\textbf{-Leja} are chosen among a sufficiently large and dense discretization of $\mathcal{X}$. Here, in the case where $\mathcal{X} = \mathbb{R}$, we choose a uniform discretization of the bounded interval $[-10, 10]$ with $10000$ points.
	\end{remark}
	\subsubsection{A second function}
	In this section, we consider $\mathcal{X} = [-1, 1]$ equipped with the uniform measure and the function
	\begin{equation}
	u_2(x) = \frac{1}{1+5x^2}.
	\end{equation}		
	The approximation space is $V_m = \mathbb{P}_{m-1} = \text{span}\{\varphi_i: 1 \le i \le m\}$, where the basis $\{\varphi_i\}_{i=1}^m$ is chosen as the Legendre polynomials of degree less than $m-1$. This is referred to as example 2.
	\begin{table}
		\small
		\begin{subtable}{.99\textwidth}
			\centering
			\begin{tabular}{cc|c||c|c||c|c||c|c}   
				& \multicolumn{2}{c||}{\textbf{OWLS}} & \multicolumn{2}{c||}{\makecell{{\textbf{c-BLS}} \\ $(M=1)$}} &  \multicolumn{2}{c||}{\makecell{{\textbf{c-BLS}} \\ $(M=100)$}} &  \multicolumn{2}{c}{\makecell{{\textbf{s-BLS}} \\ $(M=100)$}}\\
				\rowcolor{gray!10!white}  $p$ & $\varepsilon$ & $n$ & $\varepsilon$ & $n$ & $\varepsilon$ & $n$ & $\varepsilon$ & $n$ \\                                   
				5 & [-1.3; -1.2] & 134 & [-1.3; -1.2] & 134 & [-1.3; -1.3] & 48 & [-1.2; -0.9] & [6; 6] \\         
				
				\rowcolor{gray!10!white}  10 & [-2.4; -2.4] & 265 & [-2.4; -2.4] & 265 & [-2.4; -2.4] & 108 & [-2.3; -1.9] & [11; 11] \\      
				
				15 & [-3.1; -3.1] & 405 & [-3.2; -3.2] & 405 & [-3.2; -3.2] & 176 & [-3.1; -2.8] & [16; 16] \\     
				
				\rowcolor{gray!10!white}  20 & [-4.3; -4.2] & 548 & [-4.3; -4.3] & 548 & [-4.3; -4.3] & 249 & [-4.2; -4.1] & [21; 23] \\    
				25 & [-5.0; -4.8] & 697 & [-5.1; -5.0] & 697 & [-5.1; -5.0] & 326 & [-5.0; -4.7] & [26; 29] \\        
				
				\rowcolor{gray!10!white}  30 & [-6.2; -6.1] & 848 & [-6.2; -6.2] & 848 & [-6.2; -6.2] & 405 & [-6.1; -5.8] & [31; 35] \\      
				35 & [-6.9; -6.9] & 1001 & [-6.9; -6.9] & 1001 & [-6.9; -6.9] & 488 & [-6.9; -6.6] & [36; 40] \\
				
				\rowcolor{gray!10!white}  40 & [-8.0; -8.0] & 1157 & [-8.1; -8.1] & 1157 & [-8.1; -8.1] & 572 & [-8.0; -7.7] & [42; 46] \\                                                                                                                                                   
			\end{tabular}                                                                                                                                             
			\caption{Guaranteed stability with probability greater than $0.99$ for \textbf{OWLS} and almost surely for the other methods.}
			\label{table:ex2Lprecision}
		\end{subtable}
		\begin{subtable}{.99\textwidth}
			\centering
			\begin{tabular}{cc|c|c|c||c|c|c}
				
				\rowcolor{gray!10!white}  & \multicolumn{4}{c||}{\textbf{Interpolation}} & \multicolumn{3}{c}{\textbf{Least-Squares regression}}\\
				& \multicolumn{1}{c|}{$\bm{\mathcal{I}}$\textbf{-GaussL}} & \multicolumn{1}{c|}{$\bm{\mathcal{I}}$\textbf{-Magic}} & \multicolumn{1}{c|}{$\bm{\mathcal{I}}$\textbf{-Leja}} & \multicolumn{1}{c||}{$\bm{\mathcal{I}}$\textbf{-Fekete}} & \multicolumn{1}{c|}{\textbf{OWLS}} & \multicolumn{1}{c|}{\makecell{{\textbf{BLS}} \\ $(M=100)$}} & \multicolumn{1}{c}{\textbf{s-BLS}}\\
				\rowcolor{gray!10!white} $p$ & $\varepsilon$ & $\varepsilon$ & $\varepsilon$ & $\varepsilon$ & $\varepsilon$ & $\varepsilon$ & $\varepsilon$\\
				5 & [-1.3; -1.3] &  [-0.9; -0.8] & [-1.1; -1.1] & [-1.1; -1.1] & [-0.6; 0.5] & [-1.1; -0.3] & [-1.2; -1.0]  \\                                                                                                                                                                                                         
				\rowcolor{gray!10!white}  10 & [-2.3; -2.3] & [-2.2; -2.2] & [-2.2; -2.2] & [-2.2; -2.2] & [-0.9; 1.1] & [-1.7; 0.6] & [-2.3; -1.6]  \\                                                                                                                                  
				
				15 & [-3.2; -3.1] & [-2.9; -2.8] & [-3.0; -2.9] & [-3.0; -2.9] & [-2.0; 1.5] & [-2.5; 1.2] & [-2.9; -2.6]  \\                                                                                                                                         
				
				\rowcolor{gray!10!white}  20 & [-4.2; -4.2] & [-4.0; -3.9] & [-4.1; -4.1] & [-4.1; -4.1] & [-1.9; 1.1] & [-2.8; 1.4] & [-4.1; -3.3]  \\                                                                                                                                                                                               
				25 & [-5.1; -5.0] &  [-4.9; -4.8] & [-4.9; -4.8] & [-4.8; -4.8] & [-2.4; 1.5] & [-3.5; 1.5] & [-4.8; -4.2]  \\                                                                                                                                                                                                
				\rowcolor{gray!10!white}  30 & [-6.1; -6.0] & [-5.7; -5.7] & [-6.0; -5.9] & [-6.0; -5.9] & [-3.2; 2.1] & [-4.0; 0.5] & [-5.8; -5.0] \\                                                                                                                                                                                                         
				35 & [-6.9; -6.9] &  [-6.6; -6.6] & [-6.7; -6.7] & [-6.7; -6.7] & [-4.3; 3.0] & [-4.0; 1.2] & [-6.7; -5.3] \\                                                                                                                                                                                                    
				\rowcolor{gray!10!white}  40 & [-7.9; -7.9] & [-7.7; -7.7] & [-7.8; -7.8] & [-7.8; -7.8] & [-5.6; 3.8] & [-4.7; 0.1] & [-7.7; -6.6]  \\                                                                                                              
				
			\end{tabular} 
			\caption{Given cost: $n= m$.}
			\label{table:ex2Lcost}
		\end{subtable}
		\caption{Approximation error $\varepsilon$ in log-10 scale for the example 2. Abbreviations are defined in \ref{subs:notations}.}
		\label{table:ex2L}
	\end{table}
	For this example, the same observations than in Subsection \ref{subsubs:first_function} can be made:
	\begin{itemize}
		\item when resampling (see Table \ref{table:ex2Lprecision}), it is possible to guarantee the stability of the approximation at a lower cost, without increasing the approximation error,
		\item when resampling and also subsampling (see Table \ref{table:ex2Lprecision}), it is possible to guarantee the stability of the approximation at a cost close to the interpolation regime, without increasing the approximation error,
		\item the \textbf{s-BLS} method is comparable to interpolation in terms of the accuracy of the approximation (see Table \ref{table:ex2Lcost}).
	\end{itemize}
	The only difference is that the $\bm{\mathcal{I}}$\textbf{-Magic} method behaves almost as well as the other $\bm{\mathcal{I}}$- method, which was not the case with the Gaussian measure.
	\subsubsection{A third function}
	We here consider the function
	\begin{equation}
	u_3(x) = \sum_{i=0}^p \exp\left(-\frac{i}{2}\right)\psi_i(x)
	\end{equation}
	where $\mathcal{X} = \mathbb{R}$ is equipped with the Gaussian measure, $(\psi_1, \hdots, \psi_m ) = \bm{U}(\varphi_1, \hdots, \varphi_m)$, with $\{\varphi_i\}_{i=1}^m$ the set of Hermite polynomials of degree less than $p$ and $\bm{U}$ an orthogonal matrix. In practice $\bm{U}$ is taken as the matrix of the left singular vectors of a $m \times m$ matrix $\bm{A}$, whose elements are i.i.d. realizations of a standard Gaussian random variable $\mathcal{N}(0,1)$.\\
	In this example, $p$ is chosen equal to 40, the approximation space $V_m = \text{span} \{ \psi_i :  1 \le i \le m \}$, and we consider different $\bm{U}$ for each trial. Therefore, we also have confidence intervals for the $\bm{\mathcal{I}}$- methods. The basis $(\psi_1, \hdots, \psi_m )$ is chosen as approximation basis. This is referred to as example 3 and the associated results are summarized in Table \ref{table:ex3precision} and Table \ref{table:ex3cost}. 
	\begin{table}
		\small
		\begin{subtable}{.99\textwidth}
			\centering
			\begin{tabular}{cc|c||c|c||c|c||c|c}
				& \multicolumn{2}{c||}{\textbf{OWLS}} & \multicolumn{2}{c||}{\makecell{{\textbf{c-BLS}} \\ $(M=1)$}} &  \multicolumn{2}{c||}{\makecell{{\textbf{c-BLS}} \\ $(M=100)$}} &  \multicolumn{2}{c}{\makecell{{\textbf{s-BLS}} \\ $(M=100)$}}\\
				\rowcolor{gray!10!white} $p$  & $\varepsilon$ & $n$ & $\varepsilon$ & $n$ & $\varepsilon$ & $n$ & $\varepsilon$ & $n$\\                                                                                                                                                                                                                                         
				5 & [-1.7; -1.5] & 134 & [-1.7; -1.5] & 134 & [-1.7; -1.4] & 48 & [-1.6; -1.1] & [6; 7] \\                 
				
				\rowcolor{gray!10!white}  10 & [-2.7; -2.6] & 265 & [-2.8; -2.6] & 265 & [-2.8; -2.6] & 108 & [-2.7; -2.3] & [11; 13] \\          
				15 & [-3.9; -3.7] & 405 & [-3.8; -3.7] & 405 & [-3.9; -3.7] & 176 & [-3.7; -3.3] & [18; 23] \\          
				
				\rowcolor{gray!10!white}  20 & [-5.0; -4.8] & 548 & [-5.0; -4.7] & 548 & [-5.0; -4.6] & 249 & [-4.9; -4.4] & [24; 26] \\            
				25 & [-6.1; -5.7] & 697 & [-6.0; -5.8] & 697 & [-6.0; -5.8] & 326 & [-6.0; -5.6] & [29; 34] \\              
				
				\rowcolor{gray!10!white}  30 & [-7.1; -6.9] & 848 & [-7.1; -6.9] & 848 & [-7.1; -6.9] & 405 & [-7.1; -6.7] & [34; 38] \\         
				35 & [-8.2; -7.9] & 1001 & [-8.2; -8.0] & 1001 & [-8.2; -7.9] & 488 & [-8.3; -7.7] & [40; 45] \\        
				
				\rowcolor{gray!10!white}  40 & [-15.8; -15.4] & 1157 & [-15.8; -15.5] & 1157 & [-15.8; -15.2] & 572 & [-15.7; -15.4] & [41; 47] \\
				
			\end{tabular}                                                                                                                                    
			\caption{Guaranteed stability with probability greater than $0.99$ for \textbf{OWLS} and almost surely for the other methods.}
			\label{table:ex3precision}
		\end{subtable}
		\begin{subtable}{.99\textwidth}
			\centering
			\begin{tabular}{cc|c|c|c||c|c|c}
				\rowcolor{gray!10!white}  & \multicolumn{4}{c||}{\textbf{Interpolation}} & \multicolumn{3}{c}{\textbf{Least-Squares regression}}\\
				& \multicolumn{1}{c|}{$\bm{\mathcal{I}}$\textbf{-GaussH}} & \multicolumn{1}{c|}{$\bm{\mathcal{I}}$\textbf{-Magic}} & \multicolumn{1}{c|}{$\bm{\mathcal{I}}$\textbf{-Leja}} & \multicolumn{1}{c||}{$\bm{\mathcal{I}}$\textbf{-Fekete}} & \multicolumn{1}{c|}{\textbf{OWLS}} & \multicolumn{1}{c|}{\makecell{{\textbf{BLS}} \\ $(M=100)$}} & \multicolumn{1}{c}{\textbf{s-BLS}}\\
				\rowcolor{gray!10!white} $p$ & $\varepsilon$ & $\varepsilon$ & $\varepsilon$ & $\varepsilon$ & $\varepsilon$ & $\varepsilon$ & $\varepsilon$\\
				5 & [-1.5; 0.2] & [-1.4; -0.7] & [-1.4; -0.8] & [-1.4; -1.0] & [-1.5; 0] & [-1.6; -1.0] & [-1.6; -1.1] \\                                                                                                                                           
				
				\rowcolor{gray!10!white} 10 & [-2.8; -1.2] & [-2.3; -1.6] & [-2.4; -1.8] & [-2.3; -1.5] & [-2.4; -1.3] & [-2.4; -1.4] & [-2.6; -1.7] \\                                                                                                                                 
				
				15 & [-3.6; -2.5]  & [-3.5; -2.6] & [-3.4; -2.4] & [-3.4; -2.7] & [-3.4; -0.9] & [-3.3; -1.6] & [-3.5; -1.0] \\                                                                                                                                 
				
				\rowcolor{gray!10!white}  20 & [-4.6; -3.6] & [-4.4; -3.6] & [-4.5; -3.9] & [-4.5; -4.0] & [-4.7; -3.8] & [-4.6; -3.4] & [-4.6; -3.8] \\                                                                                                                                
				
				25 & [-5.5; -4.2] & [-5.3; -4.7] & [-5.5; -4.8] & [-5.5; -4.9] & [-5.4; -3.3] & [-5.5; -4.1] & [-5.6; -5.1] \\                                                                                                                             
				
				\rowcolor{gray!10!white}  30 & [-6.8; -5.6] & [-6.5; -5.8] & [-6.4; -5.9] & [-6.5; -5.8] & [-7.2; -5.5] & [-6.6; -4.8] & [-6.8; -5.8] \\                                                                                                                                  
				
				35 & [-7.8; -6.5] & [-7.4; -6.5] & [-7.3; -6.9] & [-7.3; -6.9] & [-8.6; -6.8] & [-7.7; -6.4] & [-8.1; -7.2] \\                                                                                                                              
				
				\rowcolor{gray!10!white}  40 & [-15.9; -14.6] & [-8.2; -6.3] & [-13.2; -11.4] & [-12.4; -11.1] & [-11.8; -4.2] & [-14.2; -7.7] & [-15.7; -15.1] \\                                                                                                                                  
			\end{tabular} 
			\caption{Given cost: $n= m$.}
			\label{table:ex3cost}
		\end{subtable}
		\caption{Approximation error $\varepsilon$ for the example 3. Abbreviations are defined in \ref{subs:notations}.}
		\label{table:ex3}
	\end{table}
	Hence, in the same manner than in Table \ref{table:ex1H} and Table \ref{table:ex2L}, we notice that
	\begin{itemize}
		\item the error of approximation decreases when the size of the approximation space increases for all methods, we however notice that the $\bm{\mathcal{I}}$\textbf{-Magic} does not perform as well as the other methods, 
		\item the errors associated with \textbf{s-BLS} $(M=100)$ method are almost the same than the ones associated with the \textbf{OWLS}, \textbf{c-BLS} $(M=1)$ and \textbf{c-BLS} $(M=100)$ methods while being based on a number of evaluations of the function tending to the interpolation regime,
		\item the \textbf{s-BLS} method provides better results than the \textbf{OWLS} and \textbf{BLS} $(M=100)$ methods when $n$ is chosen equal to $m$ (interpolation regime). It also provides really better results than all the $\bm{\mathcal{I}}$-methods, except the $\bm{\mathcal{I}}$\textbf{-GaussH} (Gauss-Hermite points are still used).
	\end{itemize}
	
	For this example, it is important to notice that the approximation space is not generated from a set of commonly-used polynomials, for which there exists adapted sequences of points for interpolation. 
	This highlights the interest of the \textbf{s-BLS} method, which guarantees good sequences of points for the approximation, no matter what the approximation space is. The results obtained with the other  $\bm{\mathcal{I}}$-methods show that it is difficult to choose a suitable set of initial points (in size and distribution).\\
	\begin{remark}
		As in example \ref{subsubs:first_function}, the points for the $\bm{\mathcal{I}}$\textbf{-Magic}, $\bm{\mathcal{I}}$\textbf{-Fekete} and $\bm{\mathcal{I}}$\textbf{-Leja} are chosen among a sufficiently large and dense discretization of $\mathcal{X}$. Here, in the case where $\mathcal{X} = \mathbb{R}$, we choose a uniform discretization of the bounded interval $[-10, 10]$ with $10000$ points.
	\end{remark}
	\subsubsection{Multi-dimensional example}
	\label{subsubs:multi-d_example}
	In this section, we consider $\mathcal{X} = [-1,1]^d$, equipped with the uniform measure and the function:
	\begin{equation}
	u(x) = \frac{1}{1-\frac{0.5}{2d}\sum_{i=1}^dx_i}
	\end{equation}
	We consider the hyperbolic cross defined by
	\begin{equation}
	\mathbb{P}_{\Lambda} = \text{span}\{ \varphi_i(x) = \prod_{k=1}^{d} \psi^k_{i_k}(x), \ i \in \Lambda \} , \text{ where } \Lambda = \{ i = (i_1, \hdots i_d ), \  \prod_{k=1}^d (i_k+1) \le p+1\}
	\end{equation}
	where the $(\psi^k_{i_k})_{i_k \ge 0}$ are sequences of univariate Legendre polynomials.\\
	For each $k \in \{1, \hdots, d\}$, $ \{\psi^k_{i_k}\}_{i_k=1}^{k_{\max}}$ is an orthonormal basis of $L^2_{\mu_k}(\mathcal{X}_k)$.\\ To define a set of interpolation points unisolvent for the tensorized basis $\{\varphi_i\}_{i \in \Lambda}$, we proceed as follows, for each $1 \le k \le d$, we introduce a sequence of points $( z_{i_k}^k)_{i_k \ge 1} \in \mathcal{X}_k$ (Gauss, Leja, Fekete or Magic points) such that $( z_{i_k}^k)_{i_k \ge 1}^{p} $ is unisolvent for $\text{span} \{ \psi^k_{i_k} \}_{i_k = 1}^p$. Then we let, the multivariate sequence of points 
	$$ \Gamma_{\Lambda} = \{\bm{z}_{i} = (z^1_{i_1}, \hdots, z^d_{i_d}) : i \in \Lambda \} $$
	
	\begin{table}
		\small
		\begin{subtable}{.99\textwidth}
			\centering                                                                                                                                                                                                                                                                                                                             
			\begin{tabular}{ccc|c||c|c||c|c||c|c}                                                                                                                                                                                                                                                                                                      
				& 	&  \multicolumn{2}{c||}{\textbf{OWLS}} &  \multicolumn{2}{c||}{\makecell{{\textbf{c-BLS}} \\ $(M=1)$}} &  \multicolumn{2}{c||}{\makecell{{\textbf{c-BLS}} \\ $(M=100)$}} &  \multicolumn{2}{c}{\makecell{{\textbf{s-BLS}} \\ $(M=100)$}} \\                                                                                                                                                                    
				\rowcolor{gray!10!white}$p$ & $ m$ & $\varepsilon$ & $n$ & $\varepsilon$ & $n$ & $\varepsilon$ & $n$ & $\varepsilon$ & $n$ \\                                  
				4 & 10  & [-1.8; -1.7] & 238 & [-1.8; -1.8] & 238 & [-1.8; -1.8] & 96 & [-1.7; -1.5] & [10; 12] \\                                                                                                                                                           
				\rowcolor{gray!10!white}9 &  27 & [-3.3; -3.2] & 727 & [-3.3; -3.3] & 727 & [-3.3; -3.3] & 341 & [-3.2; -3.0] & [33; 38] \\             
				14 &  45 & [-4.2; -4.1] & 1282 & [-4.2; -4.2] & 1282 & [-4.2; -4.2] & 641 & [-4.1; -3.9] & [58; 63] \\                                                                                                                                                               
				\rowcolor{gray!10!white} 19 & 66 & [-5.6; -5.5] & 1960 & [-5.7; -5.5] & 1960 & [-5.7; -5.6] & 1019 & [-5.6; -5.5] & [92; 99] \\ 
				24 &  87 & [-6.5; -6.4] & 2659 & [-6.6; -6.4] & 2659 & [-6.5; -6.4] & 1418 & [-6.4; -6.3] & [130; 137] \\                                                                                                                                                        
				\rowcolor{gray!10!white} 29 & 111 & [-7.3; -7.1] & 3477 & [-7.3; -7.2] & 3477 & [-7.3; -7.2] & 1893 & [-7.2; -7.1] & [173; 183] \\                                                                                                                                                                             
			\end{tabular}                                                                                                                                                               
			\caption{Guaranteed stability with probability greater than $0.99$ for \textbf{OWLS} and almost surely for the other methods.}
			\label{table:ex4precision}
		\end{subtable}
		\begin{subtable}{.99\textwidth}
			\centering                        
			\begin{tabular}{ccc|c|c|c||c|c|c}
				\rowcolor{gray!10!white}&   & \multicolumn{4}{c||}{\textbf{Interpolation}} & \multicolumn{3}{c}{\textbf{Least-Squares regression}}\\
				& 	& \multicolumn{1}{c|}{$\bm{\mathcal{I}}$\textbf{-GaussL}} & \multicolumn{1}{c|}{$\bm{\mathcal{I}}$\textbf{-Magic}} & \multicolumn{1}{c|}{$\bm{\mathcal{I}}$\textbf{-Leja}} & \multicolumn{1}{c||}{$\bm{\mathcal{I}}$\textbf{-Fekete}} & \multicolumn{1}{c|}{\textbf{OWLS}} & \multicolumn{1}{c|}{\makecell{{\textbf{BLS}} \\ $(M=100)$}} & \multicolumn{1}{c}{\textbf{s-BLS}}\\
				\rowcolor{gray!10!white}       $p$ & $m$ & $\varepsilon$ & $\varepsilon$ & $\varepsilon$ & $\varepsilon$ & $\varepsilon$ & $\varepsilon$ & $\varepsilon$ \\

				4 & 10 &  [-1.0; -1.0] & [-1.0; -1.0]  & [-1.6; -1.6] & [-1.0; -1.0] & [-1.2; 0.5] & [-1.5; 0.5] & [-1.6; -1.4]  \\                                                                                                                                               
				
				\rowcolor{gray!10!white}9 & 27 &  [-1.7; -1.6] & [-2.2; -2.2] & [-3.1; -3.0] & [-1.7; -1.6] & [-2.7; -0.5] & [-2.7; -0.9] & [-2.9; -1.8]  \\                                                                                                                                  
				
				14 & 45 &  [-2.2; -2.1] & [-3.1; -3.0] & [-3.7; -3.6] & [-2.2; -2.1] & [-3.4; -1.5] & [-3.2; -2.0] & [-3.6; -2.4]  \\                                                                                                                                 
				
				\rowcolor{gray!10!white}19 &  66 &  [-2.7; -2.6] & [-4.0; -4.0] & [-5.4; -5.0] & [-2.7; -2.6] & [-4.8; 0.5] & [-4.5; -1.6] & [-5.1; -3.7] \\                                                                                                                                   
				
				24 & 87  & [-3.1; -3.0] &   [-4.1; -4.1] & [-6.1; -6.0] & [-3.0; -2.9] & [-5.3; -2.8] & [-5.2; -2.7] & [-5.6; -4.2]  \\                                                                                                                                       
				
				\rowcolor{gray!10!white}29 &  111  & [-1.2; -1.1] & [-4.9; -4.8] & [-6.7; -6.4] & [-1.0; -0.9] & [-5.8; -3.3] & [-5.9; -4.2] & [-6.5; -5.7]  \\                                                                                                                                                                                                                                                                                                                                     
			\end{tabular}  
			\caption{Given cost: $n= m$}
			\label{table:ex4cost}
		\end{subtable}
		\caption{Approximation error $\varepsilon$ for the example 4 with $d=2$. Abbreviations are defined in Subsection \ref{subs:notations}.}
		\label{table:ex4}
	\end{table}
	In Table \ref{table:ex4}, we observe that the best approximation errors are obtained with the least-squares regression methods, when the stability is guaranteed:
	\begin{itemize}
		\item in Table \ref{table:ex4precision}, the \textbf{s-BLS} method strongly reduces the number of samples necessary to get this stability, it is about $1.5$ times the interpolation regime.
		\item in Table \ref{table:ex4cost}, the $\bm{\mathcal{I}}$\textbf{-Leja} is the only interpolation method which performs better than the \textbf{s-BLS} method with a given cost. However, the $\bm{\mathcal{I}}$\textbf{-Leja} is less accurate than the \textbf{s-BLS} method with guaranteed stability.
	\end{itemize}
	\begin{table}
		\centering
		\small
		\begin{subtable}{.99\textwidth}
			\centering                                                                                                                                                                  
			\begin{tabular}{ccc|c||c|c||c|c||c|c}                                                                                                                                 
				& 	&  \multicolumn{2}{c||}{\textbf{OWLS}} &  \multicolumn{2}{c||}{\makecell{{\textbf{c-BLS}} \\ $(M=1)$}} &  \multicolumn{2}{c||}{\makecell{{\textbf{c-BLS}} \\ $(M=100)$}} &  \multicolumn{2}{c}{\makecell{{\textbf{s-BLS}} \\ $(M=100)$}} \\                                                                                                                                                                    
				\rowcolor{gray!10!white}$p$ & $ m$ & $\varepsilon$ & $n$ & $\varepsilon$ & $n$ & $\varepsilon$ & $n$ & $\varepsilon$ & $n$ \\                                                                                                                                                   
				4 & 23 & [-1.5; -1.4] & 608 & [-1.5; -1.5] & 608 & [-1.5; -1.5] & 279 & [-1.5; -1.3] & [27; 33] \\                                                                                                                                                            
				\rowcolor{gray!10!white} 7 & 63 & [-2.2; -2.0] & 1862 & [-2.2; -2.1] & 1862 & [-2.2; -2.0] & 963 & [-2.1; -1.9] & [99; 109] \\ 
				10 & 93 & [-2.2; -2.1] & 2861 & [-2.3; -2.1] & 2861 & [-2.3; -2.1] & 1535 & [-2.1; -2.0] & [164; 172] \\                                                                                                                                                        
				\rowcolor{gray!10!white} 13 & 153 & [-2.4; -2.3] & 4946 & [-2.5; -2.4] & 4946 & [-2.5; -2.4] & 2763 & [-2.4; -2.3] & [291; 305] \\ 
				
			\end{tabular} 
			\caption{Guaranteed stability with probability greater than $0.99$ for \textbf{OWLS} and almost surely for the other methods.}
			\label{table:ex5stability}
		\end{subtable}
		\begin{subtable}{.99\textwidth}
			\centering                        
			\begin{tabular}{ccc|c|c|c||c|c|c}
				
				\rowcolor{gray!10!white} &   & \multicolumn{4}{c||}{\textbf{Interpolation}} & \multicolumn{3}{c}{\textbf{Least-Squares regression}}\\
				&	& \multicolumn{1}{c|}{$\bm{\mathcal{I}}$\textbf{-GaussL}} & \multicolumn{1}{c|}{$\bm{\mathcal{I}}$\textbf{-Magic}} & \multicolumn{1}{c|}{$\bm{\mathcal{I}}$\textbf{-Leja}} & \multicolumn{1}{c||}{$\bm{\mathcal{I}}$\textbf{-Fekete}} & \multicolumn{1}{c|}{\textbf{OWLS}} & \multicolumn{1}{c|}{\makecell{{\textbf{BLS}} \\ $(M=100)$}} & \multicolumn{1}{c}{\textbf{s-BLS}}\\
				\rowcolor{gray!10!white}        $p$ & $m$ & $\varepsilon$ & $\varepsilon$ & $\varepsilon$ & $\varepsilon$ & $\varepsilon$ & $\varepsilon$ & $\varepsilon$ \\                                                                                                                                                                                                                                                                                                                                                                                                                                                                                          
				4 & 23 & [-0.6; -0.6] & [-1.0; -0.9] &  [-1.3; -1.2] & [-0.6; -0.6] & [-1.1; 0.7] & [-1.2; 0.1] & [-1.0; 0.1]  \\                                                                                                                                          
				
				\rowcolor{gray!10!white} 7 & 63 & [-0.9; -0.9] & [-1.4; -1.3] & [-1.2; -1.1] & [-0.9; -0.9] & [-1.2; -0.2] & [-1.7; 0.1] & [-1.6; -0.5]  \\                                                                                                                                    
				10 & 93 & [-1.0; -0.9] & [-1.3; -1.3] & [-0.5; -0.4] & [-1.0; -0.9] & [-1.2; 0] & [-1.1; 1.0] & [-1.4; 0.4]  \\                                                                                                                                            
				\rowcolor{gray!10!white}13 & 153 & [-1.2; -1.1] & [-1.6; -1.5] & [-2.2; -2.0] & [-1.2; -1.1] & [-1.7; -0.2] & [-1.5; -0.3] & [-1.6; -0.7]  \\                                                                                                                                                                                                                                                                                                                                                                                                                                  
			\end{tabular}                                                                                                                                                                                   
			\caption{Given cost: $n= m$}
			\label{table:ex5cost}
		\end{subtable}
		\caption{Approximation error $\varepsilon$ for the example 4 with $d=4$. Abbreviations are defined in Subsection \ref{subs:notations}.}
		\label{table:ex5}
	\end{table}
	In Table \ref{table:ex5}, we observe that the conclusions are the same in dimension 4 than for the dimension 2. The only difference is that the number of samples necessary to get the stability is about 2 times the interpolation regime.
	\subsection{A noisy example}
	In this example, we consider the case where the evaluations are affected by a noise $e$, which fulfils the assumptions of Section \ref{sec:noisy_case} (the noise is independent from the sample $\widetilde{\bm{x}}^n_K$, centred with a bounded conditional variance). Here we choose $e \sim \mathcal{N}(0, \sigma^2)$.\\
	\begin{table}
		\small
		
		\begin{subtable}{.99\textwidth}
			\centering                                                                                                                                                                  
			\begin{tabular}{ccc|c||c|c||c|c||c|c}                                                                                                                                 
				& 	&  \multicolumn{2}{c||}{\textbf{OWLS}} &  \multicolumn{2}{c||}{\makecell{{\textbf{c-BLS}} \\ $(M=1)$}} &  \multicolumn{2}{c||}{\makecell{{\textbf{c-BLS}} \\ $(M=100)$}} &  \multicolumn{2}{c}{\makecell{{\textbf{s-BLS}} \\ $(M=100)$}} \\                                                                                                                                                                    
				\rowcolor{gray!10!white}					$m$ & $\sigma$ & $\varepsilon$ & $N$ & $\varepsilon$ & $N$ & $\varepsilon$ & $N$ & $\varepsilon$ & \# $\bm{K}$ \\                                                                                                                  
				10 & 0.1 & [-1.8; -1.8] & 238 & [-1.8; -1.7] &  238 & [-1.8; -1.8] & 238 & [-1.6; -1.3] & [10; 11] \\                                                                                                                            
				& 0.01 & [-1.8; -1.7] & 238 & [-1.8; -1.6] &  238 &  [-1.8; -1.8] & 238 & [-1.6; -1.3] & [10; 11] \\                                                                                                                             
				& 0.001 & [-1.8; -1.6] & 238 & [-1.8; -1.8] &  238 &  [-1.8; -1.7] & 238  & [-1.7; -1.4] & [10; 12] \\                                                                                                                          
				\rowcolor{gray!10!white} 					27 & 0.1 & [-2.8; -2.6] & 727 & [-2.8; -2.6] & 727 & [-2.8; -2.6] & 727 & [-2.0; -1.8] & [31; 35] \\                                                                                                                            
				\rowcolor{gray!10!white} 					& 0.01 & [-3.3; -3.2] & 727 & [-3.3; -3.3] & 727 & [-3.3; -3.3] & 727 &  [-3.2; -3.0] & [30; 37] \\                                                                                                                             
				\rowcolor{gray!10!white} 					& 0.001 & [-3.3; -3.3] & 727 & [-3.4; -3.3] & 727 & [-3.3; -3.3] & 727 & [-3.2; -2.9] & [31; 36] \\
			\end{tabular} 
			\caption{Guaranteed stability with probability greater than $0.99$ for \textbf{OWLS} and almost surely for the other methods.}
			\label{table:exnoisystability}
		\end{subtable}
		\begin{subtable}{.99\textwidth}
			\centering       
			\begin{tabular}{ccc|c|c|c||c|c|c}
				
				\rowcolor{gray!10!white} &   & \multicolumn{4}{c||}{\textbf{Interpolation}} & \multicolumn{3}{c}{\textbf{Least-Squares regression}}\\
				&	& \multicolumn{1}{c|}{$\bm{\mathcal{I}}$\textbf{-GaussL}} & \multicolumn{1}{c|}{$\bm{\mathcal{I}}$\textbf{-Magic}} & \multicolumn{1}{c|}{$\bm{\mathcal{I}}$\textbf{-Leja}} & \multicolumn{1}{c||}{$\bm{\mathcal{I}}$\textbf{-Fekete}} & \multicolumn{1}{c|}{\textbf{OWLS}} & \multicolumn{1}{c|}{\makecell{{\textbf{BLS}} \\ $(M=100)$}} & \multicolumn{1}{c}{\textbf{s-BLS}}\\
				\rowcolor{gray!10!white}					$m$  & $\sigma$ & $\varepsilon$ & $\varepsilon$ & $\varepsilon$ & $\varepsilon$ & $\varepsilon$ & $\varepsilon$ & $\varepsilon$ \\                                                                                                                                                                                                                                                         
				10 & 0.1 & [-1.3; -0.5] & [-1.1; -1.0] & [-1.3; -0.5] & [-1.2; -0.7] & [-0.8; 2] & [-1.4; 0.0] & [-1.6; -1.1] \\                                                                                                                        
				& 0.01 & [-1.0; -0.9] & [-1.0; -1.0] & [-1.0; -0.9] & [-1.0; -0.9] & [-1.3; 1.2] & [-1.5; -0.1] & [-1.6; -1.2] \\                                                                                                                                
				& 0.001 & [-1.0; -0.9] & [-1.0; -1.0] & [-1.0; -0.9] & [-1.0; -0.9] & [-1.5; 1.0] & [-1.6; -0.1] &[-1.6; -1.0] \\                                                                                                                                   
				
				\rowcolor{gray!10!white}				27 & 0.1 & [1.5; 2.2] & [-0.8; 0.1] & [1.5; 2.2] & [1.4; 2.2] & [0.8; 7.6] & [-0.7; 1.7] & [-1.8; -0.7] \\                                                                                                                              
				
				\rowcolor{gray!10!white}				& 0.01 & [-0.2; 0.3] & [-2.5; -1.5] & [-0.2; 0.3] & [-0.2; 0.3] & [-1.4; 6.8] & [-2.3; 0.4] & [-3.0; -2.3] \\                                                                                                                     
				\rowcolor{gray!10!white}				& 0.001 & [-2.0; -1.4] & [-1.9; -1.9] & [-2.0; -1.4] & [-2.0; -1.5] & [-2.2; -0.3] & [-2.8; -0.6] & [-2.9; -1.9] \\                                                                                                                                                                  
			\end{tabular} 
			\caption{Given cost: $n= m$}
			\label{table:exnoisycost}
		\end{subtable}
		\caption{Approximation error $\varepsilon$ for the example 4 noisy with $d=2$. Abbreviations are defined in Subsection \ref{subs:notations}.}
		\label{table:exnoisy}
	\end{table}
	
	Table \ref{table:exnoisy} presents the obtained results for two different approximation spaces' dimensions ($m=10, 27$) and $3$ different standard deviations of the noise ($\sigma = 0.1, \ 0.01, \ 0.001$). When the stability is guaranteed, see Table \ref{table:exnoisystability}, we observe that the influence of the noise is more important in the $\textbf{s-BLS}$ method (which is in line with the expression from \ref{quasi-optim-greedy-noisy}, the higher $n$ is the smaller the contribution due the noise is). Furthermore, when $m=10$ ($p=4$), the approximation error is determined by the dimension of the approximation space, whereas when $m=27$ ($p=9$), the approximation error decreases when $\sigma$ decreases.\\
	In Table \ref{table:exnoisycost}, we observe that the $\bm{\mathcal{I}}$-methods are not robust to the noise, the approximation error is much more higher than for the $\textbf{s-BLS}$ method performed with $m=n$, for both dimensions of approximation spaces.
	\subsection{Overall conclusion for all examples}
	When the stability is guaranteed (see Table \ref{table:ex1Hprecision}, Table \ref{table:ex2Lprecision}, Table \ref{table:ex3precision}, Table \ref{table:ex4precision}), the $\textbf{s-BLS}$ method, whose cost is close to $m$, behaves in the same way than the other least-squares regression methods and the best $\bm{\mathcal{I}}$-method.
	When $n=m$, the $\textbf{s-BLS}$ method performs better than most of the interpolation methods.
	Depending on the choice of the approximation basis or the dimension of the problem $d$, the $\bm{\mathcal{I}}$-methods are more or less efficient. None of them give the best results in all situations, whereas the $\textbf{s-BLS}$ method with stability guaranteed is as efficient as the best $\bm{\mathcal{I}}$-method while having a number of samples tending to $m$.\\
	The example where the data are polluted with noise shows one major interest of the $\textbf{s-BLS}$ method compared to the $\bm{\mathcal{I}}$-methods: it is more robust.\\
	\section{Conclusion}
	We have proposed a method to construct the projection of a function $u$ in a given approximation space $V_m$ with dimension $m$. In this method, the approximation is a weighted least-squares projection associated with random points sampled from a suitably chosen distribution. We obtained quasi-optimality properties (in expectation) for the weighted least-squares projection, with or without reducing the size of the sample by a greedy removal of points.\\
	The error bound in the quasi-optimality property depends on the number of points selected by the greedy algorithm. The more points removed, the larger the bound will be. Therefore, if the goal is an accurate control of the error, as few points as possible should be removed. On the contrary, if the goal is to reduce the cost as much as possible but allows a larger bound of the error, the maximum number of points may be removed from the sample, which in some cases leads to an interpolation regime $(n=m)$.\\
	Furthermore, the bound obtained for the approximation error in the noisy case shows that the error is partly determined by the noise level. Depending on the goal, the influence of this noise can be reduced if necessary by the number of points.\\
	As the convergence of this greedy algorithm to the interpolation regime is not systematic, it would be interesting to look for an optimal selection of the sub-sample with regard to the stability criterion.\\
	With this method, the points are sampled from a distribution which depends on the approximation space. Considering strategies where this approximation space is chosen adaptively, as in \cite{CohenBachmayr2018} or \cite{Migliorati2018}, an important issue is the reuse of samples from one approximation space to another.
	
	\appendix 
	
	\section{Proof of Lemma \ref{lem:minimumexpectation}}
	Recall that for any sample $\bm x^n$, $Z_{\bm x^n} = \Vert \bm{G}_{\bm{x}^{n}} - \bm I \Vert_2$ and $\mathbb{P}(A_\delta) \ge 1-\eta^M$ (Lemma \ref{lem:resampling-proba}). By definition of $\bm{x}^{n,{\star}}$, we have  $\bm{x}^{n,{\star}} = \bm{x}^{n,{I^\star}}$, where given the $M$ samples $\bm x^{n,1},\hdots,\bm x^{n,M}$, the random variable 
	$I^\star$ follows the uniform distribution on the set 
	$\arg \min_{1 \le i \le M} Z_{\bm{x}^{n,i}}$ (possibly reduced to a singleton). 
	The property \eqref{norm-tildexn-bound} is a particular case of \eqref{norm-tildexn-bound-epsilon} for $\varepsilon=1$. However, let us first provide a simple proof of \eqref{norm-tildexn-bound}. We have
	\begin{equation*}
	\begin{aligned}
	\mathbb{E}\left(\Vert v \Vert_{\widetilde {\bm{x}}^{n}}^2\right) &= \mathbb{E}\left(\Vert v \Vert_{\bm{x}^{n,\star}}^2 \vert A_\delta\right) \le \mathbb{E}\left(\Vert v \Vert_{\bm{x}^{n,\star}}^2  \right)\mathbb{P}(A_\delta)^{-1} 
	\\
	&\le \mathbb{E}\left(\Vert v \Vert_{\bm{x}^{n,I^\star}}^2  \right) (1-\eta^M)^{-1}
	\le \sum_{j=1}^M \mathbb{E}\left(\Vert v \Vert_{\bm{x}^{n,j}}^2  \right) (1-\eta^M)^{-1} \\
	&= \Vert v \Vert^2 M (1-\eta^M)^{-1}.
	\end{aligned}
	\end{equation*}
	Now let us consider the proof of the other inequalities. We first note that $A_{\delta} = \{Z_{\bm x^{n,I^\star}} \le \delta\}= \left\{\min_{1 \le i \le M} Z_{\bm{x}^{n,i}} \le \delta \right\}.
	$
	We consider the events $B_{j} = \{I^\star = j \}$ which form a complete set of events. From the definition of $I^\star$ and $A_\delta$ and the fact that the samples $ \bm{x}^{n,i}$ are i.i.d., it is clear that $\mathbb{P}({B_j}) = \mathbb{P}(B_1) = M^{-1}$ and 
	$\mathbb{P}(B_j \cap A_\delta)=\mathbb{P}(B_1 \cap A_\delta)$ for all $j$. Therefore, 
	$$
	\mathbb{P}(A_{\delta} \cap B_1) = \frac{1}{M}  \sum_{j=1}^M \mathbb{P}(A_{\delta} \cap B_j) = \frac{1}{M}  \mathbb{P}(A_{\delta}) \ge  \frac{(1-\eta^M)}{M}.
	$$
	Then 
	\begin{equation*}
	\begin{aligned}
	\mathbb{E}\left(\Vert v \Vert_{\widetilde {\bm{x}}^{n}}^2\right) &= \mathbb{E}\left(\Vert v \Vert_{\bm{x}^{n,\star}}^2 \vert A_\delta\right) = \sum_{j=1}^M \mathbb{E}\left( \Vert v \Vert^2_{\bm{x}^{n,j}} \vert A_\delta \cap B_j\right) \mathbb{P}(B_j) = \mathbb{E}\left( \Vert v \Vert^2_{\bm{x}^{n,1}} \vert A_\delta \cap B_1\right)\\
	&=  \mathbb{E}\left( \Vert v \Vert^2_{\bm{x}^{n,1}} \mathds{1}_{A_\delta \cap B_1}\right)\mathbb{P}(A_{\delta} \cap B_1)^{-1}\\
	& \le \mathbb{E}\left( \Vert v \Vert^2_{\bm{x}^{n,1}}  \mathds{1}_{ Z_{\bm{x}^{n,1}} \le \delta} \mathds{1}_{\min_{2 \le i \le M} Z_{\bm{x}^{n,i}} \ge Z_{\bm{x}^{n,1}}} \right){M} (1 -\eta^M)^{-1}\\
	& = \mathbb{E}\left( \Vert v \Vert^2_{\bm{x}^{n,1}}  \mathds{1}_{ Z_{\bm{x}^{n,1}} \le \delta} \mathbb{E}\left(\mathds{1}_{\min_{2 \le i \le M} Z_{\bm{x}^{n,i}} \ge Z_{\bm{x}^{n,1}}} \vert \bm{x}^{n,1}\right)\right){M} (1 -\eta^M)^{-1}\\
	& = \mathbb{E}\left( \Vert v \Vert^2_{\bm{x}^{n,1}}  \mathds{1}_{ Z_{\bm{x}^{n,1}} \le \delta} \mathbb{E}\left(\mathds{1}_{Z_{\bm x^{n,2}} > Z_{\bm{x}^{n,1}}} \vert \bm{x}^{n,1}\right)^{M-1}\right){M} (1 -\eta^M)^{-1}.
	\end{aligned}
	\end{equation*}
	Using H\"older's inequality, we have that for any $0 < \varepsilon \le 1$,  
	\begin{equation*}
	\small
	\begin{aligned}
	\mathbb{E}\left(\Vert v \Vert_{\bm{x}^{n,\star}}^2 \vert A_\delta\right) & \le \mathbb{E}\left(\Vert v \Vert^{\frac{2}{\varepsilon}}_{\bm{x}^{n,1}}  \mathds{1}_{ Z_{\bm{x}^{n,1}} \le \delta}\right)^{\varepsilon} \mathbb{E}\left( \mathbb{E} \left( \mathds{1}_{Z_{\bm x^{n,2}} > Z_{\bm{x}^{n,1}}} \vert \bm{x}^{n,1}\right)^{\frac{M-1}{1-\varepsilon}}\right)^{1 - \varepsilon} {M} (1 -\eta^M)^{-1}\\
	&  \le \mathbb{E}\left(\Vert v \Vert^{\frac{2}{\varepsilon}}_{\bm{x}^{n,1}}\right)^{\varepsilon} \mathbb{E}\left( \mathbb{E} \left( \mathds{1}_{Z_{\bm x^{n,2}} > Z_{\bm{x}^{n,1}}} \vert \bm{x}^{n,1}\right)^{\frac{M-1}{1-\varepsilon}}\right)^{1 - \varepsilon} {M} (1 -\eta^M)^{-1}.
	\end{aligned}
	\end{equation*}
	For any measurable function $f$ and any two i.i.d. random variables $X$ and $Y$, we have that $\mathbb{E}(\mathds{1}_{f(X) >f(Y)} \vert Y)$ is a uniform random variable on $(0,1)$. Therefore 
	$\mathbb{E} \left( \mathds{1}_{Z_{\bm x^{n,2}} > Z_{\bm{x}^{n,1}}} \vert \bm{x}^{n,1}\right)$ is uniformly distributed on $\left(0,1\right)$ and
	$$\mathbb{E}\left( \mathbb{E} \left( \mathds{1}_{Z_{\bm x^{n,2}} > Z_{\bm{x}^{n,1}}} \vert \bm{x}^{n,1}\right)^{\frac{M-1}{1-\varepsilon}}\right) = \frac{1}{\frac{M-1}{1 - \varepsilon} + 1} = \frac{1 - \varepsilon}{M- \varepsilon}.
	$$   
	By combining the previous results, we obtain 
	\begin{equation*}
	\begin{aligned}
	\mathbb{E}\left(\Vert v \Vert_{\bm{x}^{n,\star}}^2 \vert A_\delta\right) & \le \mathbb{E}\big(\Vert v \Vert^{\frac{2}{\varepsilon}}_{\bm{x}^{n}}\big)^{\varepsilon} {M} (1 -\eta^M)^{-1} \frac{(1- \varepsilon)^{1 - \varepsilon}}{(M -\varepsilon)^{1 - \varepsilon}}.
	\end{aligned}
	\end{equation*}
	For $\varepsilon= 1$, we recover the result \eqref{norm-tildexn-bound}. 
	The last result simply follows from 
	$$
	\mathbb{E}\big(\Vert v \Vert^{\frac{2}{\varepsilon}}_{\bm{x}^{n}}\big) \le  \mathbb{E}\left(\Vert v \Vert^2_{\bm{x}^{n}}\right) \Vert v \Vert_{\infty,w}^{2/\varepsilon-2} =   \Vert v \Vert^2 \Vert v \Vert_{\infty,w}^{2/\varepsilon-2}.
	$$
	\section{Approximate fast greedy algorithm}
	\label{alg:greedy_strategy}
	\subsection{Computational strategy for the approximate fast greedy algorithm}
	For any $k$, we have
	\begin{equation}
	\bm{G}_{{\bm{x}^n_{K \setminus \{k\}}}} = \frac{\# K}{\# K-1}\bm{G}_{{\bm{x}^n_K}} - \frac{1}{\# K-1} w(x^{k})\bm{\varphi}(x^{k})\otimes \bm{\varphi}(x^{k})
	\end{equation}
	and 
	\begin{equation}
	\bm{G}_{{\bm{x}^n_{K \setminus \{k\}}}} - \bm{I} = \frac{\# K}{\# K-1}\left(\bm{G}_{{\bm{x}^n_K}} - \bm{I}\right)  - \frac{1}{\# K-1} w(x^{k})\bm{\varphi}(x^{k})\otimes \bm{\varphi}(x^{k}) + \frac{1}{\# K-1}\bm{I}.
	\end{equation}
	\red{We let $$\bm{A}:=  \frac{\# K}{\# K-1}\left(\bm{G}_{{\bm{x}^n_K}} - \bm{I}\right),  \quad \bm{v}^k: = \left(\frac{1}{\# K-1} w(x^{k})\right)^{1/2} \bm{\varphi}(x^{k}), ,$$
	so that $$\bm{G}_{{\bm{x}^n_{K \setminus \{k\}}}} - \bm{I}  = \bm{B} + a \bm{I}, \quad \text{with} \quad  \bm{B} := \bm{A} - \bm{v}^k \otimes \bm{v}^k \quad \text{and} \quad  a=\frac{1}{\# K-1}.$$ 
	It holds}
	$$ \Vert \bm{G}_{{\bm{x}^n_{K \setminus \{k\}}}} - \bm{I} \Vert_2 = \max(\lambda_1(\bm{B}) \red{+a}, -\lambda_m(\bm{B}) \red{-a}).$$
	As the matrix $\bm{B}$ is a rank-one update of the symmetric matrix $\red{\bm{A}}$, 
	we can state that
	\begin{equation}
	\lambda_{1}(\bm{B}) \le \red{\lambda_1(\bm{A})}
	\end{equation}
	where $\lambda_{1}(\bm{B})$ and $\lambda_{1}(\red{\bm{A}})$ are the maximal eigenvalues of $\bm{B}$ and \red{$\bm{A}$}, respectively.\\
	Bounds on highest and lowest eigenvalues are obtained in \cite{Ipsen2009} or \cite{Benasseni2011}. Here we consider the lower bounds from \cite{Benasseni2011}. For the highest eigenvalue
		$$\lambda_1(\bm{B}) \ge \red{\lambda_1(\bm{A}) - (\bm{q}_1^T \bm{v}^k)^2},$$ where $\bm{q}_1 $ is the \red{normalized} eigenvector of $\bm{B}$ associated to its highest eigenvalue $\lambda_1(\bm{B})$.\\
To bound the quantity $-\lambda_m(\bm{B}) \red{-a}$, we use the fact that $$-\lambda_{m}(\bm{B})  = \lambda_{1}(-\bm{B}) \red{=  \lambda_1(-\bm{A} + \bm{v}^k \otimes \bm{v}^k )}$$ and the bound  
	$$\red{ \lambda_1(-\bm{A} + \bm{v}^k \otimes \bm{v}^k ) \ge  -\lambda_m(\bm{A}) +  ( \bm{q}_m^T \bm{v}^k)^2}
	 $$
	 \red{where $\bm{q}_m$ is the \red{normalized} eigenvector of $\bm{B}$ associated to its lowest eigenvalue $\lambda_m(\bm{B})$. We deduce  the lower bound 
	 $$
	 \Vert \bm{G}_{{\bm{x}^n_{K \setminus \{k\}}}} - \bm{I} \Vert_2 \ge \max\{\lambda_1(\bm{A}) - (\bm{q}_1^T \bm{v}^k)^2 +a ,-\lambda_m(\bm{A}) +  ( \bm{q}_m^T \bm{v}^k)^2 - a \}.
	 $$
	 }
Instead of calculating $\Vert \bm{G}_{{\bm{x}^n_{K \setminus \{k\}}}} - \bm{I} \Vert_2$ for each $k$, \red{we evaluate $\bm{q}_1^T\bm{v}^k$ and $\bm{q}_m^T\bm{v}^k$}, this operation only
involves matrix multiplications, and we look for $k_1$ which \red{minimizes $\lambda_1(\bm{A}) - (\bm{q}_1^T \bm{v}^k)^2 +a $ and $k_2$ which minimizes $-\lambda_m(\bm{A}) +  ( \bm{q}_m^T \bm{v}^k)^2- a$}. Then we choose $$k^{\star} \in \arg\min_{k \in \{k_1, k_2\}} \Vert \bm{G}_{{\bm{x}^n_{K \setminus \{k\}}}} - \bm{I} \Vert_2.$$
	We may not find the minimizer over $K$, but in practice, we observe that this approach performs almost as well as the exact greedy and it is faster.
	\subsection{Complexity analysis}
	We compare these two subsampling methods in terms of computational cost. For the exact greedy subsampling, each time we remove a point from a $l$-sample, it requires $l$ calculations of $\Vert \bm{G}_{\bm{x}^l} -\bm{I} \Vert_2$, which takes $\mathcal{O}(m^3)$ floating point operations. Furthermore, the computation of $\bm{G}_{\bm{x}^l}$ requires $\mathcal{O}(m^2l^2)$ floating point operations. 
	We start from $l=n$ and let the greedy subsampling runs until the stability condition is no longer verified to a sample of size $k$, summing over all withdrawn points it comes
	$$\sum_{l=k+1}^{n} m^3l+m^2l^2 = \frac{m^3}{2}(n(n+1)-k(k+1)) + \frac{m^2}{6}(n(n+1)(2n+1)-k(k+1)(2k+1)).$$
	Assuming $k = c m $ with $c \ge 1$, the overall cost scales in
	$$ \mathcal{C}_E = \mathcal{O}(m^2n^3).$$
	The more points are withdrawn ($c$ smaller), the sharper this bound is.\\
	For the fast greedy subsampling method, each time we remove a point from a $l$-sample it requires one singular value decomposition of $\bm{G}_{\bm{x}^l} -\bm{I}$, which takes $\mathcal{O}(m^3)$ floating point operations. It also requires $\mathcal{O}(m^2l)$ for the computation of $\bm{G}_{\bm{x}^l}$. Using the same assumptions than before, $k = c m $ with $c \ge 1$, the overall cost scales in
	$$ \mathcal{C}_F = \mathcal{O}(m^2n^2).$$
	The more points are withdrawn ($c$ smaller), the sharper this bound is.\\
	In regards to the assumptions made in the Theorem \ref{th:s-BLS-accuracy}, we can say $n = \mathcal{O}(m\log(m))$ and therefore, 
	$$ \mathcal{C}_E = \mathcal{O}(m^5\log(m)^3) \text{ and }  \mathcal{C}_F = \mathcal{O}(m^4\log(m)^2).$$
	
	\subsection{Illustration}
	
	In the next table, we present the CPU computational times for the subsampling part, when using the technique presented in Algorithm \ref{alg:greedy_strategy} compared to an exact greedy approach, we also illustrate that its accuracy is the same by considering the example 2, with $\mathcal{X} = [-1, 1]$ equipped with the uniform measure and the function
	\begin{equation}
	u(x) = \frac{1}{1+5x^2}.
	\end{equation}		
	The approximation space is $V_m = \mathbb{P}_{m-1} = \text{span}\{\varphi_i: 1 \le i \le m\}$, where the basis $\{\varphi_i\}_{i=1}^m$ is chosen as the Legendre polynomials of degree less than $m-1$.\\ 
	\begin{table}
		\small
		\centering                                                                                                                                                               
		\begin{tabular}{c||c|c|c|c||c|c|c|c}  
			\rowcolor{gray!10!white}	& \multicolumn{4}{c||}{\textbf{Exact Greedy Subsampling}} & \multicolumn{4}{c}{\textbf{Fast Greedy Subsampling}}\\                                                                                                                                                                                                                                                                                         
			$m$ & $\Vert \bm{G}_{{\bm{x}^n_{K}}} - \bm{I} \Vert_2$ & $\#K$ & $\varepsilon(u^{\star})$ &  t (sec) & $\Vert \bm{G}_{{\bm{x}^n_{K}}} - \bm{I} \Vert_2$ & $\#K$ & $\varepsilon(u^{\star})$ & t (sec)\\                                                                                        
			\rowcolor{gray!10!white} 6 & [0.34; 0.68] & [6; 6] & [-0.9; -0.7] & [0.29; 0.43] & [0.18; 0.71] & [6; 6] & [-1; -0.6] & [0.03; 0.09] \\                                                                                                                                                                             
			11 & [0.42; 0.76] & [11; 11] & [-2; -1.7] & [1.43; 1.48] & [0.41; 0.67] & [11; 12] & [-2.1; -1.9] & [0.16; 0.22] \\                                                                                                                                                                     
			\rowcolor{gray!10!white} 16 & [0.47; 0.87] & [16; 16] & [-2.8; -2.4] & [13; 15] & [0.60; 0.88] & [16; 17] & [-2.8; -2.3] & [0.52; 0.61] \\                                                                                                                                                                   
			21 & [0.63; 0.87] & [21; 21] & [-4; -3.7] & [52; 54] & [0.74; 0.88] & [21; 23] & [-4; -3.6] & [1.6; 1.9] \\                                                                                                                                                                       
			\rowcolor{gray!10!white} 26 & [0.67; 0.88] & [26; 26] & [-4.6; -4.4] & [150; 204] & [0.64; 0.89] & [26; 29] & [-4.8; -4.4] & [3.5; 5.3] \\                                                                                                                                                              
			31 & [0.79; 0.89] & [31; 31] & [-5.7; -5.5] & [429; 464] & [0.73; 0.89] & [31; 34] & [-5.9; -5.5] & [5.8; 8.6] \\                                                                                                                                                                  
			\rowcolor{gray!10!white} 36 & [0.68; 0.87] & [36; 36] & [-6.5; -6.3] & [608; 1008] & [0.70; 0.87] & [36; 40] & [-6.7; -6.4] & [8.6; 12] \\                                                                                                                                                             
			41 & [0.77; 0.88] & [41; 42] & [-7.6; -7.4] & [948; 999] & [0.74; 0.89] & [42; 48] & [-7.8; -7.4] & [11; 13] \\
		\end{tabular}  
		\caption{Comparison of the performance between exact and fast greedy approaches, using $\delta = 0.9$ and $\eta =0.01$ both for different $m$. Number of samples $\#K$, stability constant $\Vert \bm{G}_{{\bm{x}^n_{K}}} - \bm{I} \Vert_2$ and CPU time $t$.}  
		\label{table:greedy_time_uniform}                                                                                                                                                        
	\end{table}   
	\begin{table}
		\small                                                                                                                                                           
		\centering                                                                                                                                                               
		\begin{tabular}{c||c|c|c||c|c|c}  
			\rowcolor{gray!10!white}	& \multicolumn{3}{c||}{\textbf{Exact Greedy Subsampling}} & \multicolumn{3}{c}{\textbf{Fast Greedy Subsampling}}\\                                                                                                                                                                                                                                                                                            
			$m$ & $\Vert \bm{G}_{{\bm{x}^n_{K}}} - \bm{I} \Vert$ & $\#K$  &  t (sec)  & $\Vert \bm{G}_{{\bm{x}^n_{K}}} - \bm{I} \Vert$ & $\#K$  & t (sec) \\                                                                                                                                                                                                                   
			\rowcolor{gray!10!white} 6 & [0.32; 0.63] & [6; 6] & [0.28; 0.37] & [0.21; 0.85] & [6; 6] & [0.03; 0.07] \\                                                                                                                                                                           
			11 & [0.48; 0.82] & [11; 11] & [1.45; 1.48] & [0.50; 0.77] & [11; 11] & [0.175; 0.22] \\                                                                                                                                                                      
			\rowcolor{gray!10!white} 16 & [0.61; 0.85] & [16; 16] & [13.9; 14.8] & [0.56; 0.87] & [16; 17] & [0.53; 0.6] \\                                                                                                                                                                     
			21 & [0.65; 0.84] & [21; 21] & [60; 68] & [0.62; 0.86] & [21; 23] & [1.79; 2.17] \\                                                                                                                                                                    
			\rowcolor{gray!10!white} 26 & [0.72; 0.87] & [26; 26] & [173; 186] & [0.68; 0.88] & [26; 28] & [3.5; 4.2] \\                                                                                                                                                             
			31 & [0.79; 0.87] & [31; 32] & [354; 377] & [0.66; 0.87] & [32; 35] & [5.2; 6.5] \\                                                                                                                                                                      
			\rowcolor{gray!10!white} 36 & [0.74; 0.87] & [36; 37] & [584; 636] & [0.64; 0.88] & [36; 42] & [8.2; 8.8] \\                                                                                                                                                                    
			41 & [0.73; 0.87] & [41; 42] & [993; 1041] & [0.74; 0.88] & [41; 44] & [11.9; 12.2] \\  
		\end{tabular} 
		\caption{Comparison of the performance between exact and fast greedy approaches, using $\delta = 0.9$ and $\eta =0.01$ both for different $m$. Number of samples $\#K$, stability constant $\Vert \bm{G}_{{\bm{x}^n_{K}}} - \bm{I} \Vert_2$ and CPU time $t$.}   
		\label{table:greedy_time_gaussian}                                                                                                                                                     
	\end{table}  
	
	Indeed, looking at the CPU times in Table \ref{table:greedy_time_uniform} and Table \ref{table:greedy_time_gaussian}, we observe that with the fast methods, the computational times are divided by about $m\log(m)$.\\

\end{document}